\DeclareMathOperator{\divv}{div}
\DeclareMathOperator{\curl}{curl}
\DeclareMathOperator{\loc}{loc}
\begin{document}
\title{Global weak solutions to the isentropic compressible Navier--Stokes equations with vacuum and unbounded density in a half-plane under Dirichlet boundary conditions
\thanks{
This research was partially supported by National Natural Science Foundation of China (No. 12371227) and Fundamental Research Funds for the Central Universities (No. SWU--KU24001).
}
}

\author{Shuai Wang,\ Xin Zhong {\thanks{E-mail addresses: swang238@163.com (S. Wang),
xzhong1014@amss.ac.cn (X. Zhong).}}\date{}\\
\footnotesize
School of Mathematics and Statistics, Southwest University, Chongqing 400715, P. R. China}

\maketitle
\newtheorem{theorem}{Theorem}[section]
\newtheorem{definition}{Definition}[section]
\newtheorem{lemma}{Lemma}[section]
\newtheorem{proposition}{Proposition}[section]
\newtheorem{corollary}{Corollary}[section]
\newtheorem{remark}{Remark}[section]
\renewcommand{\theequation}{\thesection.\arabic{equation}}
\catcode`@=11 \@addtoreset{equation}{section} \catcode`@=12
\maketitle{}

\begin{abstract}
We establish the global existence of a class of weak solutions to the isentropic compressible Navier--Stokes equations in a half-plane with Dirichlet boundary conditions, allowing for vacuum both in the interior and at infinity, under a suitably small initial total energy. The solutions constructed here admit unbounded densities and lie in an intermediate regularity regime between the finite-energy weak solutions of Lions--Feireisl and the framework of Hoff. This result generalizes previous works of Hoff (Comm. Pure Appl. Math. 55 (2002), pp. 1365--1407) and Perepelitsa (Arch. Ration. Mech. Anal. 212 (2014), pp. 709--726) concerning discontinuous solutions by allowing vacuum states and unbounded density. Our analysis relies on the Green function method and new estimates involving the specific structure of the equations and the geometry of the half-plane. To the best of our knowledge, this is the first result concerning global weak solutions within Hoff's framework on an unbounded domain that simultaneously accommodates Dirichlet boundary conditions and far-field vacuum. The intermediate-regularity class developed here may be viewed as a natural extension of Hoff's theory, precisely tailored to overcome the two corresponding obstructions: the lack of global space-time control of the effective viscous flux arising from far-field vacuum and the absence of boundary-induced regularity gains in the no-slip setting.
\end{abstract}

\textit{Key words and phrases}. Compressible Navier--Stokes equations; global weak solutions; vacuum;
Green function; Dirichlet boundary conditions.

2020 \textit{Mathematics Subject Classification}. 35A01; 35B65; 76N10.


\tableofcontents

\section{Introduction}
\subsection{Background and motivation}
We study the isentropic compressible Navier--Stokes equations in the half-plane $\mathbb{R}^2_+=\{\mathbf{x}\in\mathbb{R}^2:x_2>0\}$:
\begin{align}\label{a1}
\begin{cases}
\rho_t+\divv(\rho\mathbf{u})=0,\\
(\rho\mathbf{u})_t+\divv(\rho\mathbf{u}\otimes\mathbf{u})+\nabla P=
\mu\Delta\mathbf{u}+(\mu+\lambda)\nabla\divv\mathbf{u}
\end{cases}
\end{align}
with the given initial data and far-field conditions
\begin{gather}\label{a2}
(\rho,\rho\mathbf{u})|_{t=0}=(\rho_0,\rho_0\mathbf{u}_0)(\mathbf{x}),~~ \mathbf{x}\in\mathbb{R}^2_+,\\
(\rho_0,\rho_0\mathbf{u}_0)(\mathbf{x})\rightarrow(0,\mathbf{0}) \ \ \text{as}\ |\mathbf{x}|\rightarrow\infty,\ \ \mathbf{x}\in\mathbb{R}^2_+,\label{a3}
\end{gather}
and Dirichlet boundary conditions
\begin{equation}\label{a4}
\mathbf{u}(\mathbf{x},t)=\mathbf{0}, \ \ x_2=0,\ t>0,
\end{equation}
where $\mathbf{x}=(x_1,x_2)$ is the spatial coordinate, and $t\geq0$ is the time. The unknowns $\rho$, $\mathbf{u}=(u^1,u^2)$, and $P=P(\rho)=a\rho^\gamma\ (a>0,\gamma>1)$ are the fluid density, velocity, and pressure, respectively. The constants $\mu$ and $\lambda$ represent the shear viscosity and bulk viscosity of the fluid, respectively, satisfying the physical restrictions
\begin{equation*}
\mu>0,\ \ \ \mu+\lambda\geq0.
\end{equation*}

The global well-posedness of the multi-dimensional isentropic compressible Navier--Stokes equations has been studied extensively, motivated by both the rich mathematical structure of the system and its fundamental physical relevance. In the following, we restrict our discussion to those works that are closely related to the present work.
Matsumura and Nishida \cite{MN80,MN83} first obtained global classical solutions for the three-dimensional initial-boundary value problem, under the assumption that the initial data are sufficiently close to constant equilibrium states in $H^3$. Later on, Danchin \cite{Da00} established the global existence of strong solutions for small initial data
$(\rho_0,\mathbf{u}_0)$ in the scaling-critical Besov spaces
$\dot{B}^{\frac{N}{2}}_{2,1}(\mathbb{R}^N)\times\dot{B}^{\frac{N}{2}-1}_{2,1}(\mathbb{R}^N)\ (N\geq2)$. This critical framework was subsequently extended to the $L^p$ setting in \cite{CD10,CCZ10,H11}. It should be noted that all of the above results require the initial density to be strictly positive, thereby excluding the presence of vacuum states.

To advance the theory of compressible flows, increasing attention has been paid to regimes that admit vacuum. In this setting, the degeneracy of the governing equations, together with the evolution of vacuum boundaries, places the problem beyond the scope of standard degenerate PDE theory and necessitates the development of new analytical approaches. The seminal breakthrough is due to P.-L. Lions \cite{PL98}, who established finite-energy weak solutions for the isentropic system with vacuum, satisfying the global energy inequality and characterized by densities in $L^\infty_tL^\gamma_x$ with $\gamma \geq \frac{3N}{N+2}$ for $N=2,3$. The key for the construction was to build strong compactness of the density sequence via developing the renormalization techniques and the effective viscous flux arguments. This foundation has inspired significant extensions. Feireisl and collaborators \cite{F04,FNP01} relaxed the restriction on the adiabatic exponent to $\gamma > \frac{N}{2}$ with the aid of oscillation defect measures. Under assumptions of spherical symmetry or axisymmetry,
Jiang and Zhang \cite{JZ01,JZ03} established the existence of global weak solutions for any $\gamma>1$ by developing refined estimates for the effective viscous flux, which in turn enhance the integrability properties of the density.
Recently, Bresch and Jabin \cite{BJ18} extended the Lions--Feireisl framework to incorporate more complex physics, including non-monotone pressure laws and anisotropic viscous stress tensors. Despite these advances, the question of uniqueness remains a fundamental and largely open problem in the theory of weak solutions.

Parallel to the development of weak solutions theory, substantial progress has been made on the global well-posedness of classical solutions in the presence of vacuum. A significant contribution was made by Huang, Li, and Xin \cite{HLX12}, who established the global existence and uniqueness of classical solutions in $\mathbb{R}^3$ with vacuum for initial data with small total energy but possibly large oscillations.
To address the distinct logarithmic decay at infinity in two dimensions, this framework was extended to the whole plane in \cite{LX19} through the development of weighted estimates and a refined decay analysis. More recently, a relaxation of the small-energy assumption was achieved in \cite{HHPZ24} for the Cauchy problem as the adiabatic exponent $\gamma$ approaches $1$. It was shown in \cite{M2} that local smooth 3D solutions with  $\gamma\leq1+2/\sqrt{3}$ explode in finite time: the $L^\infty$ norms of the density and the velocity blow-up. Thus, in some sense, the smallness condition is necessary in order to ensure global well-posedness. It should be emphasized that one of the key issues in \cite{HHPZ24,HLX12,LX19} is to derive time-independent uniform upper bounds of the density. For more studies on the compressible Navier--Stokes equations, we refer the reader to the excellent handbook \cite{GN18} and references contained therein.
Let us point out that although the weighted energy method is effective in handling classical solutions as in \cite{LX19}, it is difficult to deal with the theory of global weak solutions in $\mathbb{R}^2$ with far-field vacuum due to the lack of decay-sensitive compactness for either the density or the pressure, which obstructs a satisfactory treatment of the nonlinear terms.

Returning to the theory of weak solutions, another significant category is formed by Hoff's intermediate weak solutions. In his foundational works \cite{Hoff95,Hoff95*}, Hoff justified the global existence of such solutions in the whole space, under the specific structural assumptions that the initial density $\rho_0$ is close to a constant in $L^\infty$. These solutions possess regularity intermediate between that of the Lions--Feireisl weak solutions and standard strong solutions.
In particular, particle trajectories are well-defined in non-vacuum regions, while density discontinuities may be transported along them (see \cite{Hoff02}).
Building on this Lagrangian viewpoint, and using the elliptic structure provided by the effective viscous flux together with the $L^\infty$ control of the density, Hoff \cite{Hoff06} proved uniqueness and continuous dependence for multidimensional singular weak solutions, thereby establishing well-posedness within this framework.  Hoff and Santos \cite{Hoff08} later developed this Lagrangian structure further and applied it to the propagation of density singularities.
It should be emphasized, however, that Hoff's whole-space theory, while allowing interior vacuum, is perturbative around a strictly positive constant far-field state. Consequently, the genuinely far-field-vacuum regime lies beyond this framework and remains a major unresolved problem.

While the preceding discussion has focused on the Cauchy problem, the initial-boundary value problem for isentropic compressible flows in domains with physically relevant boundary conditions forms a distinct and equally important line of research.
For small-energy initial data, Hoff \cite{Hoff05} constructed global weak solutions allowing vacuum in the half-space $\mathbb{R}_+^3$ under Navier-slip boundary conditions.
Global classical solutions were later established for the same type of slip boundary condition in the half-space \cite{D12} and in bounded domains \cite{CL23}.
A crucial analytical advantage in these settings is that slip boundary conditions provide explicit boundary information for the effective viscous flux $F=(2\mu+\lambda)\divv\mathbf{u}-P(\rho)$, a quantity central to the derivation of enhanced regularity. By contrast, despite its physical relevance, progress under no-slip boundary conditions has been much more limited. In this regard, Perepelitsa \cite{P14} extended Hoff's framework to Dirichlet (no-slip) boundaries under the assumptions that the initial density is strictly positive and belongs to a boundary-adapted H\"older space defined by reflection, and that the viscosity ratio $\mu/(\mu+\lambda)$ is suitably small. However, the approach in \cite{P14} appears difficult to extend to genuine no-slip boundaries in the absence of additional structural assumptions on the density near the boundary, particularly in the presence of vacuum.

It is worth noting that the propagation of an $L^\infty$ bound for the density relies crucially on the algebraic structure of the effective viscous flux (see \cite{Hoff02}).
\textit{To overcome the intrinsic obstacles associated with Dirichlet boundary conditions, it may therefore be necessary to move beyond the classical isentropic framework and consider regimes in which this structural quantity no longer plays a central role}.
In this spirit, Bresch and Burtea \cite{BB23} established global weak solutions on the torus $\mathbb{T}^N$ for compressible flows with an anisotropic viscous stress tensor, requiring only an
$L^p$ bound on the density instead of the stronger $L^\infty$ condition.
Their strategy bypasses the need for regularity gain from the effective viscous flux and thus suggests a potential pathway for addressing the isentropic no-slip problem.
Nevertheless, the presence of physical boundaries introduces additional difficulties: density irregularities near the boundary may interact with the boundary in a singular way, potentially leading to loss of transversality and cusp-like tangencies of transported discontinuity interfaces \cite{HP09,HP12}.
On the other hand, the existence of a unique global strong solution in $\mathbb{R}^3$ with far-field vacuum was recently obtained in \cite{W25} under a smallness assumption on a scaling-invariant quantity, with the analysis relying on the propagation of an additional $L^{2\gamma}$ bound for the density.
\textit{From a functional analytic perspective, an intermediate $L^p$ regularity with $\gamma < p < \infty$ places the pressure $P(\rho)=a\rho^\gamma$ in a Lebesgue space that lies strictly between the $L^1$ setting of the Lions--Feireisl theory and the $L^\infty$ framework of Hoff}.
This level of spatial integrability provides access to hidden decay information at infinity, which may be instrumental in the treatment of far-field vacuum.

Motivated by the above considerations, we aim to construct global weak solutions in an $L^p$ framework for the isentropic compressible Navier--Stokes equations in the half-plane under Dirichlet boundary conditions, allowing vacuum both in the interior and at infinity.
The main difficulty lies not only in each aspect separately, but also in their strong interaction. Under Dirichlet boundary conditions, key analytical tools, such as the effective viscous flux mechanism and vorticity estimates, become significantly less effective. Meanwhile, the weak solution framework provides no effective decay control at infinity.
This combined loss of boundary structure and far-field control renders many classical approaches inapplicable and necessitates a unified treatment of both effects.
To this end, our approach is twofold: we control the time growth in the far-field analysis by exploiting the $L^q$ $(1<q<\infty)$ integrability of the pressure $P(\rho)$, and we decompose the effective viscous flux via a Green function method, which allows the Dirichlet boundary condition to be incorporated into the analytical framework and enables us to recover the required estimates.

\subsection{Main result}

Before stating our main result, we first formulate the notations and conventions used throughout this paper. We denote by $C$ a generic positive constant which may vary at different places, and write
$C(f)$ to emphasize its dependence on $f$. The symbol $\Box$ marks the end of a proof and $a\triangleq b$ means $a=b$ by definition. For $1\le p\le \infty$ and integer $k\ge 0$, we denote the standard Sobolev spaces:
\begin{align*}
L^p=L^p(\mathbb{R}^2_+),\ W^{k, p}=W^{k, p}(\mathbb{R}^2_+),\ H^k=W^{k, 2}, \
D_0^1=\big\{\mathbf{u}\in L^1_{\loc}|\nabla\mathbf{u}\in L^2,\ \text{the trace} \, \operatorname{tr}(\mathbf{u})=\mathbf{0} \,
\ \text{on}\ \partial\mathbb{R}^2_+\big\}.
\end{align*}
Let $B_R$ denote the open ball of radius $R$ centered at the origin. For simplicity, we write
\begin{align*}
\widetilde{B}_R=B_R\cap\mathbb{R}^2_+, \ \
\int f \mathrm{d}\mathbf{x}=\int_{\mathbb{R}^2_+} f \mathrm{d}\mathbf{x}, \ \ f_i=\partial_if=\frac{\partial f}{\partial x_i}.
\end{align*}
For two $n\times n$ matrices $A=\{a_{ij}\}$ and $B=\{b_{ij}\}$, the symbol $A: B$ represents the trace of the matrix product $AB^\top$, i.e.,
\begin{equation*}
A:B\triangleq\operatorname{tr}(AB^\top)=\sum_{i,j=1}^na_{ij}b_{ij}.
\end{equation*}

The initial total energy is defined as
\begin{equation}\label{1.5}
C_0\triangleq\int_{\mathbb{R}^2_+}
\bigg(\frac{1}{2}\rho_0|\mathbf{u}_0|^2+\frac{1}{\gamma-1}P(\rho_0)\bigg)\mathrm{d}\mathbf{x}.
\end{equation}
Moreover, we denote by
\begin{align}\label{1.6}
\begin{cases}
\dot{f}\triangleq f_t+\mathbf{u}\cdot\nabla f,\\
F\triangleq(2\mu+\lambda)\divv\mathbf{u}-P(\rho),\\
\curl\mathbf{u}\triangleq-\nabla^\bot\cdot\mathbf{u}=-\partial_2u^1+\partial_1u^2,
\end{cases}
\end{align}
which represent the material derivative of $f$, the effective viscous flux, and the vorticity, respectively.

We recall the definition of weak solutions to the problem \eqref{a1}--\eqref{a4} in the sense of \cite{Hoff05}.
\begin{definition}\label{d1.1}
A pair $(\rho, \mathbf{u})$ is said to be a weak solution to the problem \eqref{a1}--\eqref{a4} provided that
\begin{equation*}
  (\rho, \rho\mathbf{u})\in C([0,\infty);H^{-1}(\mathbb{R}^2_+)),\ \
  \nabla\mathbf{u}\in L^2(\mathbb{R}^2_+\times(0,\infty))
\end{equation*}
with $(\rho,\mathbf{u})|_{t=0}=(\rho_0,\mathbf{u}_0)$. Moreover,
for any $t_2\geq t_1\geq 0$ and test function $(\phi,\boldsymbol\psi)(\mathbf{x},t)\in C^1\big(\overline{\mathbb{R}^2_+}\times[t_1,t_2]\big)$, with uniformly bounded
support in $\mathbf{x}$ for $t\in[t_1,t_2]$ and satisfying $\boldsymbol\psi|_{\partial\mathbb{R}^2_+}=0$, the following identities hold\footnote{Throughout this paper, we will use the Einstein summation over repeated indices convention.}:
\begin{align*}
\int_{\mathbb{R}^2_+}\rho(\mathbf{x},\cdot)\phi(\mathbf{x},\cdot)
\mathrm{d}\mathbf{x}\Big|_{t_1}^{t_2}
&=\int_{t_1}^{t_2}\int_{\mathbb{R}^2_+}(\rho\phi_t+
\rho\mathbf{u}\cdot\nabla\phi)\mathrm{d}\mathbf{x}\mathrm{d}t,\\
\int_{\mathbb{R}^2_+}(\rho\mathbf{u}\cdot\boldsymbol{\psi})
(\mathbf{x},\cdot)\mathrm{d}\mathbf{x}\Big|_{t_1}^{t_2}
&=\int_{t_1}^{t_2}
\int_{\mathbb{R}^2_+}\big(\rho\mathbf{u}\cdot\boldsymbol{\psi}_t+
\rho u^i\mathbf{u}\cdot\partial_i\boldsymbol{\psi}+P\divv\boldsymbol{\psi}\big)
\mathrm{d}\mathbf{x}\mathrm{d}t\notag\\
&\quad -\int_{t_1}^{t_2}
\int_{\mathbb{R}^2_+}\big(\mu\nabla\mathbf{u}:\nabla\boldsymbol{\psi}+
(\mu+\lambda)\divv\mathbf{u}\divv\boldsymbol{\psi}\big)\mathrm{d}\mathbf{x}\mathrm{d}t.
\end{align*}
\end{definition}

Given $\alpha\in(1,2)$, assume that the initial data $(\rho_0,\mathbf{u}_0)$ satisfies
\begin{equation}\label{1.12}
  0\leq\rho_0\in L^{\theta}, \ \ \bar{x}^\alpha\rho_0\in L^1, \ \ \mathbf{u}_0\in D^1_0,\ \
  \rho_0|\mathbf{u}_0|^2\in L^1,
\end{equation}
where
\begin{equation}\label{1.10}
\theta\triangleq \frac{4\gamma(2\alpha+1)}{\alpha-1}\in(20\gamma,\infty),
\ \ \bar{x}\triangleq\big(e+|\mathbf{x}|^2\big)^{1/2}\log^2\big(e+|\mathbf{x}|^2\big).
\end{equation}
Without loss of generality, let the initial density $\rho_0$ verify
\begin{equation}\label{1.7}
  \int_{\mathbb{R}^2_+}\rho_0\mathrm{d}\mathbf{x}=1,
\end{equation}
which implies that there exists a constant $\eta_0>0$ such that
\begin{equation}\label{1.8}
  \int_{\widetilde{B}_{\eta_0}}\rho_0\mathrm{d}\mathbf{x}\geq\frac{1}{2}\int_{\mathbb{R}^2_+}\rho_0\mathrm{d}\mathbf{x}=\frac{1}{2}.
\end{equation}
Moreover, we assume that there exist two constants $\hat{\rho}\geq1$ and $M\geq1$ verifying
\begin{align}\label{1.9}
\|\rho_0\|_{L^{\theta}}\leq \hat{\rho}, \ \
\|\bar{x}^\alpha\rho_0\|_{L^{1}}+\|\nabla\mathbf{u}_0\|_{L^2}\leq M,
\end{align}
and impose the condition
\begin{equation}\label{1.11}
  \frac{3\mu}{3\mu+\lambda}\leq \min \bigg\{\frac{1}{2\max\limits_{j\in\{2,3,4\}}\Lambda(j)},
  \Lambda^{-1}\bigg(\frac{9\alpha+3}{\alpha-1}\bigg)
  \bigg\},
\end{equation}
where $\Lambda(\cdot)$ denotes the positive constant introduced in Lemma \ref{l2.7}.
In particular, condition \eqref{1.11} is fulfilled provided the bulk-to-shear viscosity ratio $\lambda/\mu$ is sufficiently large.

Now we state our main result on the global existence of weak solutions.

\begin{theorem}\label{t1.1}
Let the assumptions \eqref{1.12}, \eqref{1.7}, \eqref{1.9}, and \eqref{1.11} be satisfied. Then there exists a positive constant $\varepsilon$ depending only on $\alpha, \hat{\rho}, M, a, \gamma, \mu, \lambda$, and $\eta_0$ such that if
\begin{equation}\label{1.13}
  C_0\leq\varepsilon,
\end{equation}
the problem \eqref{a1}--\eqref{a4} admits a global weak solution $(\rho,\mathbf{u})$ in the sense of Definition $\ref{d1.1}$ satisfying, for any $0<T<\infty$,
\begin{equation}\label{1.14}
\begin{cases}
0\leq\rho\in L^\infty(0,T; L^{\theta}(\mathbb{R}^2_+))\cap C([0,T];L^q(\mathbb{R}^2_+)), \ \ \text{for any} \ q\in[1,\theta),\\
\bar{x}^\alpha\rho\in L^\infty(0,T;L^{1}(\mathbb{R}^2_+)),\ \
\sqrt{\rho}\mathbf{u}\in C([0,T];L^2(\mathbb{R}^2_+)),\ \
\nabla\mathbf{u}\in L^2(\mathbb{R}^2_+\times(0,T)),\\
\big(\sigma^{\frac{1}{2}}\nabla\mathbf{u},\ \sigma^{\frac{3}{2}}\sqrt{\rho}\dot{\mathbf{u}}\big)
\in L^\infty(0,T;L^{2}(\mathbb{R}^2_+)),\ \
\big(\sigma^{\frac{1}{2}}\sqrt{\rho}\dot{\mathbf{u}},\ \sigma^{\frac{3}{2}}\nabla\dot{\mathbf{u}}\big)\in L^2(\mathbb{R}^2_+\times(0,T)),
\end{cases}
\end{equation}
with $\sigma=\sigma(t)\triangleq\min\{1,t\}$, and
\begin{equation}\label{1.15}
  \inf_{0\leq t\leq T}\int_{\widetilde{B}_{\eta_1(1+t)\log^{\alpha}(e+t)}}\rho(\mathbf{x},t)\mathrm{d}\mathbf{x}\geq\frac14
\end{equation}
for some positive constant $\eta_1$ depending only on $\alpha, \hat{\rho}, M, a, \gamma,\eta_0$, and $\|\sqrt{\rho_0}\mathbf{u}_0\|_{L^2}$.
\end{theorem}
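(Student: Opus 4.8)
\emph{Overall strategy.} The plan is to obtain $(\rho,\mathbf{u})$ as a limit of smooth solutions of approximate problems posed on the bounded half-disks $\widetilde{B}_R$, with the vacuum removed from the data. Concretely, I would regularize by setting $\rho_0^{R,\delta}=(\rho_0\ast j_\delta+\delta)\chi_R$ (renormalized so that $\int\rho_0^{R,\delta}\,\mathrm{d}\mathbf{x}=1$) and taking $\mathbf{u}_0^{R,\delta}\in C_0^\infty$ close to $\mathbf{u}_0$ in $D_0^1$, solve the initial--boundary value problem for \eqref{a1} on $\widetilde{B}_R\times(0,T_R)$ locally in time by the classical theory for strictly positive density, and then establish a collection of a priori estimates that are \emph{uniform in $R$ and $\delta$} and valid on every fixed $[0,T]$ under the smallness hypothesis $C_0\le\varepsilon$. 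Once those bounds reproduce \eqref{1.14}--\eqref{1.15}, standard weak/weak-$*$ compactness together with strong $L^q$-compactness of the density will allow passage to the limit in Definition \ref{d1.1}.

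\emph{Core a priori estimates (continuity argument).} I would run a bootstrap on a maximal time interval where, say, $\|\rho\|_{L^\infty_tL^\theta}\le 2\hat\rho$ holds together with smallness-scaled bounds on
\[
A_1(T)\triangleq\sup_{0\le t\le T}\sigma\|\nabla\mathbf{u}\|_{L^2}^2+\int_0^T\sigma\|\sqrt\rho\,\dot{\mathbf{u}}\|_{L^2}^2\,\mathrm{d}t,\quad A_2(T)\triangleq\sup_{0\le t\le T}\sigma^3\|\sqrt\rho\,\dot{\mathbf{u}}\|_{L^2}^2+\int_0^T\sigma^3\|\nabla\dot{\mathbf{u}}\|_{L^2}^2\,\mathrm{d}t.
\]
The basic energy identity gives $\sup_t\!\int(\tfrac12\rho|\mathbf{u}|^2+\tfrac1{\gamma-1}P)+\int_0^T\!\!\int(\mu|\nabla\mathbf{u}|^2+(\mu+\lambda)(\divv\mathbf{u})^2)\le C_0$. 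Testing the momentum equation against $\dot{\mathbf{u}}$, after replacing $\nabla\mathbf{u}$ by its elliptic representation through $F$ and $\curl\mathbf{u}$ furnished by the Green-function decomposition of Lemma \ref{l2.7}, yields $A_1(T)+A_2(T)\le CC_0^{1/2}$ up to lower-order terms that the bootstrap absorbs. Multiplying the continuity equation by $\bar{x}^\alpha$ and bounding $\int\rho|\mathbf{u}|\,|\nabla\bar{x}^\alpha|$ via Hölder's inequality, a weighted Caffarelli--Kohn--Nirenberg-type interpolation, and the energy bound, propagates $\|\bar{x}^\alpha\rho\|_{L^\infty_tL^1}$ with a Grönwall factor of size $\log^\alpha(e+t)$; combined with \eqref{1.8} and a bound on the rate of mass escape, this gives the mass-localization estimate \eqref{1.15}. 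Finally, for the density I would use the genuinely dissipative structure: writing $(2\mu+\lambda)\divv\mathbf{u}=F+P(\rho)$ in
\[
\frac{\mathrm{d}}{\mathrm{d}t}\|\rho\|_{L^\theta}^\theta=-(\theta-1)\int\rho^\theta\divv\mathbf{u}\,\mathrm{d}\mathbf{x}=-\frac{\theta-1}{2\mu+\lambda}\int\rho^\theta F\,\mathrm{d}\mathbf{x}-\frac{a(\theta-1)}{2\mu+\lambda}\int\rho^{\theta+\gamma}\,\mathrm{d}\mathbf{x},
\]
the last (negative) term dominates, after Hölder and Young, the $F$-contribution, provided $\|F\|_{L^{(\theta+\gamma)/\gamma}}$ is controlled in time by the material-derivative bounds through the Green-function $L^p$ estimates; this closes $\|\rho\|_{L^\infty_tL^\theta}$ and hence the bootstrap.

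\emph{Main difficulty.} The crux is the coupling of the no-slip boundary and the far-field vacuum. Because the effective viscous flux carries no boundary data under \eqref{a4}, the interior identity $\Delta F=\divv(\rho\dot{\mathbf{u}})$ cannot be closed by an ordinary elliptic estimate, so I would instead use the explicit Green function of the associated Lamé-type operator on $\mathbb{R}^2_+$ to express $\nabla\mathbf{u}$ (equivalently $F$ and $\curl\mathbf{u}$) as singular-integral transforms of $\rho\dot{\mathbf{u}}$ and $P(\rho)$, whose $L^p$ operator norms are the constants $\Lambda(p)$ of Lemma \ref{l2.7}; the smallness hypothesis \eqref{1.11} on $3\mu/(3\mu+\lambda)$ is exactly what makes the associated absorption arguments close both in $L^4$ (needed for the convective terms) and in the large exponent $(25\alpha+11)/(\alpha-1)$ (needed for the weighted far-field estimates). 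On the far-field side, since $\nabla\mathbf{u}$ has no decay, the membership of $P(\rho)$ in $L^q$ for every $q\in[\gamma,\theta]$ is used as a surrogate: this extra spatial integrability, together with the logarithmically growing mass-localization \eqref{1.15}, is what keeps $\|\bar{x}^\alpha\rho\|_{L^1}$ and the time-weighted norms finite on bounded intervals. I expect essentially all the work to lie in the bookkeeping that reconciles the exponent $\theta$ in \eqref{1.10}, the weight exponent $\alpha$, and the admissible $p$'s of Lemma \ref{l2.7}, and in handling the behaviour near $t=0$ encoded by the weights $\sigma$.

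\emph{Compactness and the limit.} With the uniform bounds in hand, I would extract subsequences converging weakly-$*$ in the spaces of \eqref{1.14}; the continuity equation gives time-equicontinuity of $\rho$ in $H^{-1}_{\loc}$, which interpolated against the $L^\theta$ bound yields strong $L^q_{\loc}$-compactness for $q<\theta$, while $\|\bar{x}^\alpha\rho\|_{L^\infty_tL^1}$ supplies tightness at infinity, so $\rho$ and $P(\rho)$ converge strongly in $L^1$; the momentum equation then gives convergence of $\sqrt\rho\,\mathbf{u}$, and one passes to the limit in the weak formulation of Definition \ref{d1.1}, the time-continuity statements in \eqref{1.14} following from the equations by the standard Lions--Feireisl/Hoff limiting procedure. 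This last step is routine once the uniform a priori estimates are secured.
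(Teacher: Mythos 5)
Your overall strategy — regularize to remove vacuum, run a continuity/bootstrap argument on $A_1,A_2$, use the half-plane Green-function decomposition for $F$ to compensate for the missing boundary data, propagate $\|\rho\|_{L^\theta}$, then pass to the limit — is the right skeleton and matches the paper's plan. But several of your technical choices would not actually work as stated, and you miss some of the mechanisms the paper needs to close the argument.

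First, you propose to truncate to the bounded half-disks $\widetilde{B}_R$ and solve on $\widetilde{B}_R\times(0,T_R)$. This conflicts with the very Green-function machinery you invoke: the decomposition of $F$ in Lemmas \ref{l2.6}--\ref{l2.7} rests on the explicit reflected Green function $\widetilde{G}(\mathbf{x},\mathbf{y})=\tfrac1{2\pi}(\log|\mathbf{x}-\mathbf{y}|-\log|\mathbf{x}-\mathbf{y}^*|)$ of the Laplacian on $\mathbb{R}^2_+$, the flatness identity $x_2 F=-F\,\mathbf{x}\cdot\mathbf{n}=0$ on $\partial\mathbb{R}^2_+$, and the Euler-type homogeneity relation \eqref{2.14}, which cancels the boundary integral terms in \eqref{2.13}. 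On the curved part of $\partial\widetilde{B}_R$ none of this holds, so the ``boundary-term-free'' decomposition evaporates and $\Lambda(q)$ is no longer available. The paper therefore approximates only the data (strictly positive near the origin, with a carefully constructed tail $\phi\epsilon$) and works on $\mathbb{R}^2_+$ throughout, invoking Lemma \ref{l2.1} directly; the tightness needed at infinity is then supplied a posteriori by $\|\bar x^\alpha\rho\|_{L^\infty_tL^1}$ (Lemma \ref{l3.5}), not by truncating the domain.

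Second, you write that $\nabla\mathbf{u}$ is to be controlled via ``its elliptic representation through $F$ and $\curl\mathbf{u}$.'' That is exactly the route the paper argues is unavailable under no-slip boundary conditions: the vorticity enjoys no usable $L^q$ estimate here, and the paper instead decomposes $\mathbf{u}=\mathbf{w}_1+\mathbf{w}_2$ at the level of the Lam\'e system (Lemma \ref{l2.8}), separating the sources $\rho\dot{\mathbf{u}}$ and $\nabla P$, and bounds $\|\nabla\mathbf{u}\|_{L^q}$ accordingly. Relatedly, this cruder estimate forces the paper to go to $L^6$ (not just $L^4$) in places, to split the time axis at $\sigma(T)$ in every one of Lemmas \ref{l3.2}--\ref{l3.4}, and to introduce a third functional $A_3(T)=\sup_t\|\nabla\mathbf{u}\|_{L^2}^2+\int_0^T\|\sqrt\rho\dot{\mathbf{u}}\|_{L^2}^2$ controlling the short-time regime; all of this is absent from your sketch, and without it the term $\int_0^T\sigma P|\nabla\mathbf{u}|^2$ and the initial-time singularity of $\nabla\mathbf{u}$ cannot be absorbed.

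Third, for the density bound, your claim that the dissipative term $-\tfrac{a(\theta-1)}{2\mu+\lambda}\int\rho^{\theta+\gamma}$ dominates ``provided $\|F\|_{L^{(\theta+\gamma)/\gamma}}$ is controlled in time'' is too coarse. The paper splits $F=\widetilde F+\tfrac{2\mu}{3\mu+\lambda}\mathcal{Q}_3(P)$: the $\mathcal{Q}_3$ part is absorbed into the dissipation by the viscosity smallness \eqref{1.11} (with the operator norm at exponent $(\theta+\gamma)/\gamma=(25\alpha+11)/(\alpha-1)$), while the $\widetilde F$ part is measured in $L^\infty$ and, because $\|\rho\dot{\mathbf{u}}\|_{L^4}$ grows polynomially in time (Lemma \ref{l3.5}), cannot be controlled by a simple Gr\"onwall argument. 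The paper's fix is the Zlotnik inequality (Lemma \ref{lzlo}), together with the interpolation $\|\widetilde F\|_{L^\infty}\lesssim\|\widetilde F\|_{L^{60}}^{15/16}\|\nabla\widetilde F\|_{L^4}^{1/16}$ which trades away most of the time growth, precisely the mechanism that dictates the lower bound $\theta>60\gamma$. Your sketch neither mentions Zlotnik nor explains how the time-uniform bound $\|\rho\|_{L^\theta}\le \tfrac74\hat\rho$ is recovered on an infinite time horizon. Finally, your assertion that the weighted estimate $\|\bar x^\alpha\rho\|_{L^1}$ propagates with a ``Gr\"onwall factor of size $\log^\alpha(e+t)$'' is wrong: Lemma \ref{l3.5} gives $(1+t)^4$; the logarithm appears in the radius of the ball in \eqref{1.15}, not in the growth of the weighted norm, and getting this bookkeeping wrong would break the $L^4$ density-weighted inequality that feeds into \eqref{3.53}.
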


Several remarks are in order.
\begin{remark}
It should be noted that the weak solutions constructed in Theorem \ref{t1.1} occupy an intermediate regularity regime between the Lions--Feireisl finite-energy weak solutions and the weak solutions developed by Hoff \cite{Hoff95,Hoff95*,Hoff02,Hoff05}. Our framework of a half-plane with far-field vacuum and Dirichlet boundary conditions provides significant insight for two extremes: the Cauchy problem with vacuum at infinity and the Dirichlet problem in bounded domains. We leave such interesting problems for our future studies.
\end{remark}

\begin{remark}
The present result may be regarded as a refinement of Perepelitsa's work in \cite{P14}.
On the one hand, the presence of vacuum is allowed both in the interior and at infinity.
On the other hand, compared with the $L^\infty$ density framework in \cite{P14}, the present construction only requires $L^\theta$ integrability of the density and dispenses with the additional H\"older regularity assumption imposed on the initial density near the boundary.
\end{remark}

\begin{remark}
It should be emphasized that the approach introduced by Bresch and Burtea \cite{BB23} relies essentially on the periodic geometry of the torus $\mathbb{T}^N$, which allows them to make use of harmonic analysis tools due to the absence of boundaries, and therefore cannot be directly extended to the setting with physical boundaries. The absence of an $L^\infty$ bound for the density constitutes a fundamental analytical difference in the treatment of the nonlinear terms; see Subsection \ref{sec1.3} for details. Conversely, the intermediate-regularity viewpoint adopted here may serve as a useful guideline for more complex models, including compressible non-Newtonian flows, where pointwise density bounds are out of reach and integrability-based structural estimates become essential.
\end{remark}

\begin{remark}
The index $\theta$ in \eqref{1.10} is a technical parameter determined by weighted inequalities in Lemma \ref{l2.4} and large-time analysis in \eqref{3.57}, and may be relaxed with a more refined argument; see Subsection \ref{sec1.3} for details.
The assumption \eqref{1.11} on viscosity is introduced solely to handle the difficulties arising from the no-slip boundary.
In addition, it should be mentioned that it is not clear whether the regularity assumption on the initial velocity in Theorem \ref{t1.1} can be generalized to $H^s$ for some $0<s<1$, which has been achieved in the whole plane case \cite{Hoff02} as the initial density is bounded from above and below (among other conditions).
\end{remark}

\begin{remark}
The integrability advantage of the present framework is largely captured by the estimate
\begin{equation*}
\|\rho\dot{\mathbf{u}}\|_{L^{p}}
\leq
\|\sqrt{\rho}\|_{L^\frac{2p}{2-p}}\,
\|\sqrt{\rho}\dot{\mathbf{u}}\|_{L^2},
\  \ \ 1<p<2,
\end{equation*}
which provides a fundamental bridge between the weighted energy structure and the nonlinear estimates, and plays a crucial role in handling both boundary effects and the behavior at infinity; see Subsection~\ref{sec1.3} for details.
As a key ingredient in the treatment of the Dirichlet boundary condition, we employ the Green function method together with the boundary-compatible quantity $\mathbf{x}F$, which yields additional boundary information:
\begin{equation*}
 x_2F=-F\mathbf{x}\cdot\mathbf{n}=0, \quad \text{on } \partial\mathbb{R}^2_+ = \{x_2 = 0\}.
\end{equation*}
This observation is specific to the half-plane geometry; see Subsection~\ref{sec2.2} for details. Motivated by this structure, it would be natural to investigate whether an analogous boundary-compatible mechanism can be identified in bounded domains, for instance in the unit ball, by considering a suitable condition involving \((|\mathbf{x}|^2-1)F\) on the boundary.
\end{remark}

\subsection{Strategy of the proof}\label{sec1.3}

We now outline the main ideas and difficulties in the proof. The argument relies on the construction of global smooth approximate solutions via a delicate limiting procedure. This approach consists of two fundamental steps. First, we apply the local existence theory for initial densities that are strictly positive (Lemma \ref{l2.1}). Second, we pass to the limit as the lower bound of the initial density tends to zero (see Section \ref{sec4} for details). Bridging these two steps crucially depends on deriving suitable \textit{a priori} estimates that are uniform with respect to this lower bound and ensure the desired regularity.

It should be emphasized that the key techniques developed in \cite{Hoff05, BB23} do not appear to be directly adaptable to the Dirichlet boundary value problem. In addition, without an $L^\infty$ bound of the density, it is unclear how the approach of \cite{P14} could be extended, particularly in the presence of vacuum both in the interior and at spatial infinity. As a consequence, new observations and ideas are required to handle the joint presence of no-slip boundary conditions and far-field vacuum.

To address the issue of far-field vacuum within the $L^p$ framework, we extend the weighted inequality established in \cite{LX19} for nonnegative function $\varrho\in L^1\cap L^\infty$ in $\mathbb{R}^2$ to the half-plane setting under the weaker assumption $L^1\cap L^\zeta$ (see Lemma \ref{l2.4}).
In this spirit, we work in the density integrability class $L^1\cap L^\theta$ for some $\theta\in[\zeta,\infty)$ and analyze the propagation of the $L^\theta$ bound. Actually, from the mass
equation $\eqref{a1}_1$, we derive that
\begin{equation}\label{1.16}
  \frac{\mathrm{d}}{\mathrm{d}t}\int\rho^r\mathrm{d}\mathbf{x}+(r-1)\int\rho^r\divv\mathbf{u}\mathrm{d}\mathbf{x}=0
\end{equation}
for any $1<r<\infty$. This indicates that the effective viscous flux
\begin{equation*}
  F=(2\mu+\lambda)\divv\mathbf{u}-P(\rho)
\end{equation*}
plays a crucial role in generating dissipation for the density at the $L^r$ level.
However, the presence of the Dirichlet boundary necessitates a substantially more delicate analysis of both $F$ and $P(\rho)$, especially under the assumption that $\rho\in L^1\cap L^\theta$.

The first and foremost difficulty lies in the treatment of the effective viscous flux $F$, which therefore demands and indeed receives substantial analytical attention throughout the proof (see Subsection \ref{sec2.2} for details). Unlike the cases of $\mathbb{R}^N$ or bounded domains with slip boundary conditions, the Dirichlet setting yields weaker analytic control from the structural quantity for enhancing regularity. This in turn motivates the introduction of a quantity that formally shares the essential structural advantages of Hoff's framework.
Relying solely on the equations and boundary conditions, we observe that the specific geometry of the half-plane gives rise to an additional boundary
\begin{equation*}
  x_2F=-F\mathbf{x}\cdot\mathbf{n}=0, \ \ \text{on} \ \partial\mathbb{R}^2_+=\{x_2=0\},
\end{equation*}
where $\mathbf{n}=(0,-1)$ is the unit outward normal vector to $\partial\mathbb{R}^2_+$.
Based on this observation, we exploit the elliptic structures of $\Delta\mathbf{u}$ and $\Delta(\mathbf{x}F)$. In combination with the Green function method, this allows us to decompose
$F$ into two components (see Lemma \ref{l2.6} for details):
\begin{equation*}
F\sim \widetilde{F}+\mathcal{Q}_3(P),
\end{equation*}
where (see Lemma \ref{l2.7})
\begin{equation*}
\|\nabla\widetilde{F}\|_{L^q}\lesssim \|\rho\dot{\mathbf{u}}\|_{L^q}, \ \
\|\mathcal{Q}_3(P)\|_{L^q}\lesssim\|P\|_{L^q},
\ \ \ \ \ 1<q<\infty.
\end{equation*}
Once the $L^q$ bound for the pressure $P$ is established, the analysis effectively reduces to studying $\widetilde{F}$, which serves as a more regular and analytically tractable principal unknown.

The second major difficulty arises in obtaining estimates for $\|\nabla\mathbf{u}\|_{L^q}$ with $2<q<\infty$. In this setting, the classical approach based on the Helmholtz decomposition is no longer applicable, due to the absence of an $L^q$ bound for the vorticity $\curl\mathbf{u}$.
Indeed, although one may isolate a certain component from $F$, the remaining part, when coupled with $\curl\mathbf{u}$, fails to provide any additional regularity. As an alternative strategy, we decompose the velocity field
by analyzing separately the contributions of the source terms in the Lam\'e system (see \eqref{2.19} for $\mathbf{u}=\mathbf{w}_1+\mathbf{w}_2$).
This approach yields a cruder estimate for $\nabla\mathbf{u}$, which forces us to go beyond the framework of $L^4$ estimates and to employ $L^6$ bounds at certain stages of the analysis (see \eqref{3.18} and \eqref{3.41}). In light of this, we adopt the following estimate to the two-dimensional setting (see \eqref{2.20}):
\begin{equation*}
\|\nabla\mathbf{w}_1\|_{L^6}\lesssim \|\nabla^2\mathbf{w}_1\|_{L^{3/2}}
\lesssim\|\rho\dot{\mathbf{u}}\|_{L^{3/2}}\lesssim \|\sqrt{\rho}\|_{L^6}\|\sqrt{\rho}\dot{\mathbf{u}}\|_{L^2}.
\end{equation*}
Although these less tractable estimates do not close on their own, they allow the argument to proceed and ultimately close once the small-energy assumption is fully exploited.

Moreover, obtaining any decay control at infinity within the weak solution framework is particularly delicate and challenging, especially in the presence of Dirichlet boundary conditions.
This is due to the limited usefulness of the available \textit{a priori} bounds,
such as the crude control of $\|\nabla\mathbf{u}\|_{L^q}$, and to the lack of higher-order estimates, which leaves no mechanism to derive decay in this setting.
In view of the tools at hand, the only information we can employ is the $L^\theta$ bound concerning the density. As an attempt, we exploit the material derivative weighted by the density (see \eqref{3.53}), and obtain a growth bound:
\begin{equation*}
\|\nabla\widetilde{F}\|_{L^4}\lesssim \|\rho\dot{\mathbf{u}}\|_{L^4}\lesssim (1+t)^4\cdots.
\end{equation*}
Unlike in the strong-solution setting, the route based on the decay-control for higher derivatives of the velocity field is no longer available, so we abandon the decay-based approach and reexamine the density and the pressure $P(\rho)$. By exploiting the integrability of the pressure together with the spatial embedding in two-dimensional setting, we sharpen the above estimate and reduce the time growth (see \eqref{3.57}):
\begin{equation*}
\|\widetilde{F}\|_{L^\infty} \lesssim
\|\nabla \widetilde{F}\|_{L^4}^{\frac{3}{16}}
\|\widetilde{F}\|_{L^{\frac{52}{3}}}^{\frac{13}{16}}
\lesssim
\|\nabla \widetilde{F}\|_{L^4}^{\frac{3}{16}}
\|\nabla\widetilde{F}\|_{L^{\frac{52}{29}}}^{\frac{13}{16}}
 \lesssim
(1+t)^\frac34
\|\sqrt{\rho}\|_{L^{\frac{52}{3}}}^{\frac{13}{16}}
\|\sqrt{\rho}\dot{\mathbf{u}}\|_{L^2}^{\frac{13}{16}}\cdots,
\end{equation*}
which can now be enclosed under \eqref{1.13}. In fact, the appearance of $\|P\|_{L^6}$ and $\|\sqrt{\rho}\|_{L^{\frac{52}{3}}}$, combined with Lemma \ref{l2.4}, naturally leads to the following choice of the index:
\begin{equation*}
\theta\geq\max\big\{\zeta=4(2\alpha+1)/(\alpha-1), \ 6\gamma, \ 26/3\big\}.
\end{equation*}

Finally, the $L^\theta$ integrability of the density gives rise to additional analytical difficulties in the treatment of the nonlinear terms involving $P(\rho)$, thereby necessitating a more refined analysis. For example, the term
\begin{equation*}
\int_0^T\int\sigma P|\nabla\mathbf{u}|^2\mathrm{d}\mathbf{x}\mathrm{d}t
\end{equation*}
becomes difficult to control analytically for $\sigma=\sigma(t)\triangleq\min\{1,t\}$, particularly in view of the lack of a satisfactory bound for $\|\nabla\mathbf{u}\|_{L^4}$.
To overcome this obstacle, we split the time interval into two regimes and carefully account for the initial-time singularity associated with the velocity gradient by means of an $L^6$ estimate
(see \eqref{3.18} and \eqref{3.19}). This procedure allows us to focus on
\begin{equation*}
\int_{0}^{T}\int\sigma^3P^4\mathrm{d}\mathbf{x}\mathrm{d}t,
\end{equation*}
which can be enclosed by the estimates derived in \eqref{3.33} once the effective viscous flux has been controlled.
This reflects the fact that, in general, the $L^\theta$ framework is less effective than $L^\infty$ estimates in controlling nonlinear terms involving the pressure $P(\rho)$.

Having resolved the above difficulties, we further invoke Zlotnik's inequality (see Lemma \ref{lzlo}) to obtain a time-uniform upper bound for $\|\rho\|_{L^\theta}$. A key step in this procedure is the global control of $\|\widetilde{F}\|_{L^\infty}$, as shown in \eqref{3.54} and \eqref{3.57}, where the density-weighted inequality established in Lemma \ref{l2.4} and the spatial weighted estimate in Lemma \ref{l3.5} play central roles.
With these \textit{a priori} estimates in hand, we obtain the desired regularity for the density,
in particular a level of integrability stronger than that typically available in the Lions--Feireisl's theory. At last, by a standard compactness argument combined with the continuity method, we construct a global weak solution in the sense of Definition \ref{d1.1} that satisfies \eqref{1.14}.

The rest of the paper is organized as follows. In the next section, we recall some known facts and elementary inequalities to be used later, and we introduce the core treatment of the effective viscous flux $F$ in Subsection \ref{sec2.2}.
Section \ref{sec3} is devoted to establishing {\it a priori} estimates. The proof of Theorem \ref{t1.1} is given in Section \ref{sec4}.

\section{Preliminaries}\label{sec2}

In this section we collect some facts and elementary inequalities that will be used later.

\subsection{Auxiliary results and inequalities}

In this subsection we review some known facts and inequalities. First, similar to the proofs in \cite{MN80,LL14}, we have the following result concerning the local existence of strong solutions to the problem \eqref{a1}--\eqref{a4}.
\begin{lemma}\label{l2.1}
Assume that
\begin{equation*}
 \bar{x}^\alpha\rho_0\in L^1,\ \rho_0\in H^{2},\
\mathbf{u}_0\in D^2\cap D_0^1, \ \sqrt{\rho_0}\mathbf{u}_0\in L^2, \
 \inf\limits_{\mathbf{x}\in \widetilde{B}_{R}}\rho_0(\mathbf{x})>0
\end{equation*}
for some large half-ball $\widetilde{B}_{R}$, then there exists a small time $T$ such that the problem \eqref{a1}--\eqref{a4} admits a unique strong solution $(\rho,\mathbf{u})$ satisfying
\begin{equation*}
\bar{x}^\alpha\rho \in L^\infty(0,T; L^{1}),\ \
\rho\in C([0,T]; H^{2}),\
\mathbf{u}\in C([0,T]; D^{2}\cap D^1), \ \inf_{\widetilde{B}_{R}\times[0,T]}\rho(\mathbf{x},t)
\geq \frac12 \inf\limits_{\mathbf{x}\in \widetilde{B}_{R}}\rho_0(\mathbf{x})
>0.
\end{equation*}
\end{lemma}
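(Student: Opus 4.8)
The plan is to prove Lemma~\ref{l2.1} by the classical linearization-and-iteration scheme for the compressible Navier--Stokes system with (possibly) degenerate density, in the spirit of \cite{MN80,LL14}. Starting from $\mathbf{u}^0\equiv\mathbf{u}_0$, I would build a sequence $(\rho^{k},\mathbf{u}^{k})$ by solving, at each step, two decoupled \emph{linear} subproblems: first the transport equation $\rho^{k+1}_t+\divv(\rho^{k+1}\mathbf{u}^{k})=0$ with $\rho^{k+1}|_{t=0}=\rho_0$, integrated along the flow of $\mathbf{u}^{k}$; then the linear parabolic system
\[
\rho^{k+1}\mathbf{u}^{k+1}_t+\rho^{k+1}\mathbf{u}^{k}\cdot\nabla\mathbf{u}^{k+1}-\mu\Delta\mathbf{u}^{k+1}-(\mu+\lambda)\nabla\divv\mathbf{u}^{k+1}=-\nabla P(\rho^{k+1}),
\]
with $\mathbf{u}^{k+1}|_{t=0}=\mathbf{u}_0$ and $\mathbf{u}^{k+1}|_{\partial\mathbb{R}^2_+}=\mathbf{0}$. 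Each subproblem is uniquely solvable by standard theory: the transport equation preserves $H^2\cap L^1(\bar x^\alpha)$ regularity and the positive lower bound on $\widetilde{B}_R$, and the Lam\'e system, whose coefficient $\rho^{k+1}$ lies in $H^2\hookrightarrow L^\infty$, is handled by the $L^q$-elliptic estimate for the stationary Dirichlet Lam\'e operator on the half-plane, yielding $\mathbf{u}^{k+1}\in C([0,T];D^2\cap D^1_0)$. (Should the density genuinely vanish on a compact set, one first replaces $\rho_0$ by a strictly positive approximation, as will be done in Section~\ref{sec4}, which also removes any need for a separate compatibility condition.)

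The core of the proof is to produce bounds for $(\rho^{k},\mathbf{u}^{k})$ that are \emph{uniform in $k$} on a short interval $[0,T]$, with $T$ and the bounds depending only on $\|\rho_0\|_{H^2}$, $\|\mathbf{u}_0\|_{D^2}$, $\|\sqrt{\rho_0}\mathbf{u}_0\|_{L^2}$, $\|\bar x^\alpha\rho_0\|_{L^1}$, $R$, $\inf_{\widetilde{B}_R}\rho_0$, $\alpha$, $\gamma$, $\mu$, $\lambda$, $a$. For the density one propagates the $H^2$ bound via the transport estimate $\frac{\mathrm d}{\mathrm dt}\|\rho^{k+1}\|_{H^2}\lesssim(1+\|\nabla\mathbf{u}^{k}\|_{W^{1,q}})\|\rho^{k+1}\|_{H^2}$ with $q>2$, and---the ingredient absent in the bounded-domain theory---the weighted bound $\|\bar x^\alpha\rho^{k+1}\|_{L^1}$, obtained by multiplying the transport equation by $\bar x^\alpha$ and using $|\mathbf{u}^{k}\cdot\nabla\bar x^\alpha|\lesssim\bar x^{\alpha-1}\log(e+|\mathbf{x}|^2)|\mathbf{u}^{k}|$ together with a weighted Sobolev inequality to close a Gronwall loop (the half-plane analogue of Lemma~\ref{l2.4}). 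For the velocity one tests the Lam\'e system against $\mathbf{u}^{k+1}_t$ to control $\|\nabla\mathbf{u}^{k+1}\|_{L^2}$ and $\|\sqrt{\rho^{k+1}}\mathbf{u}^{k+1}_t\|_{L^2}$, then feeds this into the Dirichlet Lam\'e elliptic estimate $\|\mathbf{u}^{k+1}\|_{D^2}\lesssim\|\rho^{k+1}\mathbf{u}^{k+1}_t+\rho^{k+1}\mathbf{u}^{k}\cdot\nabla\mathbf{u}^{k+1}+\nabla P(\rho^{k+1})\|_{L^2}$, estimating $\|\rho^{k+1}\mathbf{u}^{k+1}_t\|_{L^2}\le\|\sqrt{\rho^{k+1}}\|_{L^\infty}\|\sqrt{\rho^{k+1}}\mathbf{u}^{k+1}_t\|_{L^2}$; differentiating the system in time then gives the remaining high-order bounds, including $\nabla^2\mathbf{u}^{k+1}\in L^2(0,T;L^q)$, $q>2$. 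Taking $T$ small makes all these estimates close simultaneously and uniformly.

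With the uniform bounds in hand, I would show $(\rho^{k},\mathbf{u}^{k})$ is Cauchy in the weaker space $C([0,T];L^2)\times\big(C([0,T];L^2)\cap L^2(0,T;D^1_0)\big)$---subtract consecutive equations and test the density difference in $L^2$ and the velocity difference against its own time derivative---pass to the limit, and identify the limit $(\rho,\mathbf{u})$ as the desired strong solution; lower semicontinuity of norms transfers the uniform bounds, and the same difference estimate applied to two solutions yields uniqueness. Finally, the lower bound on $\widetilde{B}_R\times[0,T]$ follows once more from the characteristic representation $\rho(\mathbf{x},t)=\rho_0(\mathbf{x}_0)\exp\!\big(-\!\int_0^t\divv\mathbf{u}\,\mathrm ds\big)$ along the trajectory through $(\mathbf{x},t)$ issuing from $\mathbf{x}_0$ at $t=0$: since $\nabla\mathbf{u}\in L^2(0,T;L^\infty)$ (via $\nabla^2\mathbf{u}\in L^2(0,T;L^q)$, $q>2$), shrinking $T$ makes $\int_0^t\|\divv\mathbf{u}\|_{L^\infty}\,\mathrm ds$ small, while for $T$ small the backward flow keeps $\mathbf{x}_0$ in the region where $\rho_0$ is bounded below by (almost) $\inf_{\widetilde{B}_R}\rho_0$. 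I expect the principal obstacle to be the second step: closing, on an interval uniform in $k$, the weighted $L^1$ control of the density (needed precisely because the density is allowed to decay at infinity) together with the velocity estimates under a possibly degenerate density coefficient, all while respecting the no-slip condition through the half-plane Lam\'e estimate---so that the bounded-domain and periodic arguments of \cite{Hoff05,BB23,P14} do not transfer directly.
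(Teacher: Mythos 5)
The paper does not actually prove Lemma~\ref{l2.1}; it is stated as following ``similar to the proofs in \cite{MN80,LL14}'' and deferred to those references. Your proposal is a faithful reconstruction of the linearization--iteration scheme from those works---adapted to the half-plane Dirichlet setting and carrying the weighted $\bar x^\alpha\rho$ control from \cite{LL14} to handle far-field decay---so it takes essentially the route the paper invokes.
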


The following well-known Gagliardo--Nirenberg inequality (see \cite{NI1959}) will be used later.
\begin{lemma}\label{l2.2}
For $p\in [2, \infty)$, $q\in(1, \infty)$, and $r\in (2, \infty)$, there exists some generic constant $C>0$ which may depend on $p$, $q$, and $r$ such that, for $f\in H^1$ and $g\in L^q$ with $\nabla g\in L^r$,
\begin{gather*}
\|f\|_{L^p}\leq C\|f\|_{L^2}^\frac{2}{p}\|\nabla f\|_{L^2}^{1-\frac{2}{p}},\ \ \
\|g\|_{L^\infty}\leq C\|g\|_{L^q}^\frac{q(r-2)}{2r+q(r-2)}\|\nabla g\|_{L^r}^\frac{2r}{2r+q(r-2)}.
\end{gather*}
\end{lemma}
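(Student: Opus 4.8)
The plan is to recognize both bounds as classical two-dimensional Gagliardo--Nirenberg inequalities and to dispose of the half-plane at the outset by an even reflection across $\partial\mathbb{R}^2_+=\{x_2=0\}$. Setting $(Eh)(x_1,x_2)\triangleq h(x_1,|x_2|)$, one checks that $\|Eh\|_{L^\sigma(\mathbb{R}^2)}=2^{1/\sigma}\|h\|_{L^\sigma(\mathbb{R}^2_+)}$ for every $\sigma\in[1,\infty)$, and that $Eh$ is a Sobolev function on $\mathbb{R}^2$ whose weak gradient is the even extension of $\partial_1 h$ together with the \emph{odd} extension of $\partial_2 h$, with no singular contribution along $\{x_2=0\}$; this uses only that $h$ has a trace there, which holds in dimension two once $\nabla h\in L^\sigma_{\mathrm{loc}}$ with $\sigma\ge 1$, and for the function $g$ the additional hypothesis $\nabla g\in L^r$ with $r>2$ moreover provides $g$ with a continuous representative on $\overline{\mathbb{R}^2_+}$, so the reflection is unambiguous. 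Consequently $\|\nabla(Eh)\|_{L^\sigma(\mathbb{R}^2)}\le 2^{1/\sigma}\|\nabla h\|_{L^\sigma(\mathbb{R}^2_+)}$, and it suffices to prove both inequalities on the whole plane for $Ef$ and $Eg$ and then restrict to $\mathbb{R}^2_+$. Finally, by density of $C_c^\infty(\mathbb{R}^2)$ in $H^1(\mathbb{R}^2)$, and by mollification in the second case, it is enough to argue for smooth functions.

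For the first inequality the case $p=2$ is trivial, so take $p>2$. The core is the Ladyzhenskaya estimate on $\mathbb{R}^2$: from $f(x_1,x_2)^2=2\int_{-\infty}^{x_1} f\,\partial_1 f\,\mathrm{d}y_1$ one gets $\sup_{x_1} f^2(x_1,x_2)\le 2\int_{\mathbb{R}}|f\,\partial_1 f|\,\mathrm{d}y_1$ and symmetrically in $x_2$; multiplying the two pointwise bounds, integrating in $(x_1,x_2)$, and applying Cauchy--Schwarz gives $\|f\|_{L^4}^2\le\sqrt{2}\,\|f\|_{L^2}\|\nabla f\|_{L^2}$. For $2\le p\le 4$ I would interpolate $\|f\|_{L^p}\le\|f\|_{L^2}^{\lambda}\|f\|_{L^4}^{1-\lambda}$ and insert the $L^2$ and $L^4$ cases; for $p>4$ I would apply the two-dimensional Sobolev embedding $\|v\|_{L^2(\mathbb{R}^2)}\le C\|\nabla v\|_{L^1(\mathbb{R}^2)}$ to $v=|f|^{p/2}$, then use H\"older together with the Lebesgue interpolation $\|f\|_{L^{p-2}}\le\|f\|_{L^2}^{\vartheta}\|f\|_{L^p}^{1-\vartheta}$ and absorb the resulting power of $\|f\|_{L^p}$ on the left. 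In either branch the exponents collapse to $\|f\|_{L^p}\le C\|f\|_{L^2}^{2/p}\|\nabla f\|_{L^2}^{1-2/p}$, which I would double-check against the dilation $f\mapsto f(\lambda\cdot)$ that fixes the exponents uniquely.

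For the second inequality the main tool is a local Morrey--Sobolev estimate, available since $r>2$: on any ball $B_R\subset\mathbb{R}^2$ of radius $R$, writing $(g)_{B_R}$ for the mean of $g$ over $B_R$, one has $\sup_{x\in B_R}\bigl|g(x)-(g)_{B_R}\bigr|\le C_r R^{1-2/r}\|\nabla g\|_{L^r(B_R)}$, while H\"older gives $\bigl|(g)_{B_R}\bigr|\le C R^{-2/q}\|g\|_{L^q(B_R)}$. Adding these and taking the supremum over $x$ and over a family of balls of radius $R$ covering $\mathbb{R}^2$ yields $\|g\|_{L^\infty(\mathbb{R}^2)}\le C\bigl(R^{-2/q}\|g\|_{L^q(\mathbb{R}^2)}+R^{1-2/r}\|\nabla g\|_{L^r(\mathbb{R}^2)}\bigr)$ for every $R>0$. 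Optimizing in $R$, that is, choosing $R$ so that $R^{\,1-2/r+2/q}\sim\|g\|_{L^q}/\|\nabla g\|_{L^r}$, produces $\|g\|_{L^\infty}\le C\|g\|_{L^q}^{a}\|\nabla g\|_{L^r}^{1-a}$ with $a=\dfrac{q(r-2)}{2r+q(r-2)}$, again consistent with the scaling $g\mapsto g(\lambda\cdot)$. The smoothness hypothesis is then removed by mollification: $g\ast\phi_\varepsilon\to g$ in $L^q$, $\nabla(g\ast\phi_\varepsilon)=(\nabla g)\ast\phi_\varepsilon\to\nabla g$ in $L^r$, and $g\ast\phi_\varepsilon\to g$ locally uniformly along a subsequence, so the estimate passes to the continuous representative of $g$.

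The points requiring genuine care, rather than routine computation, are: (i) the reduction to $\mathbb{R}^2$, which needs an extension operator bounded on $L^\sigma$ for \emph{all} exponents simultaneously and introducing no singular gradient term --- even reflection does this on $\mathbb{R}^2_+$, but one must justify it under the weak regularity at hand, in particular that $g$ has a boundary trace and a continuous representative; (ii) the regularization in the second estimate, since $g$ is only assumed in $L^q$ with $\nabla g\in L^r$, without a priori decay or global Sobolev membership, so one must check stability of the optimized-radius inequality under mollification; and (iii) bookkeeping the exponent $a=q(r-2)/\bigl(2r+q(r-2)\bigr)$ and verifying that all constants depend only on $p,q,r$ and the dimension. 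I expect (iii) --- the scaling and optimization arithmetic --- to be the step most prone to slips, while (i)--(ii) are where one must stay attentive to the half-plane geometry and the weak hypotheses; none of the three is deep.
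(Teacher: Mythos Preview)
Your argument is correct: the even reflection reduces the half-plane case to $\mathbb{R}^2$, the Ladyzhenskaya identity plus interpolation/bootstrapping handles the first inequality, and the Morrey oscillation bound combined with the H\"older estimate on the mean, optimized in the radius, yields the second with the stated exponent $a=q(r-2)/(2r+q(r-2))$. The care you flag at points (i)--(iii) is well placed and none of it hides a gap.

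By comparison, the paper does not prove this lemma at all: it simply records the two estimates as the classical two-dimensional Gagliardo--Nirenberg inequalities and cites Nirenberg's 1959 paper. So your proposal is not a different route so much as an actual proof where the paper gives none. What your write-up buys is self-containment and an explicit check that the half-plane geometry causes no trouble (via even reflection); what the paper's citation buys is brevity, since these inequalities are standard and the precise exponents can be read off from the scaling $f\mapsto f(\lambda\cdot)$, $g\mapsto g(\lambda\cdot)$ exactly as you note.
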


For $\mathbf{v}\in H^{1}_0(\mathbb{R}^2_+)$, define its odd extension $\tilde{\mathbf{v}}$ to $\mathbb{R}^2$. Then $\tilde{\mathbf{v}}\in H^{1}(\mathbb{R}^2)$ and there exists an absolute constant $C>0$ such that
\begin{equation*}
  C^{-1}\|\mathbf{v}\|_{H^{1}(\mathbb{R}^2_+)}\leq \|\tilde{\mathbf{v}}\|_{H^{1}(\mathbb{R}^2)}
  \leq C\|\mathbf{v}\|_{H^{1}(\mathbb{R}^2_+)}.
\end{equation*}
Thus, via this extension argument, the following weighted inequality in
\cite[Theorem B.1]{PL96}, originally stated for $\mathbb{R}^2$, also holds for functions in $H^{1}_0(\mathbb{R}^2_+)$.
\begin{lemma}\label{l2.3}
For $m\in[2,\infty)$ and $\beta\in(1+m/2,\infty)$, there exists a positive constant $C$ such that for all $\mathbf{v}\in H^{1}_0(\mathbb{R}^2_+)$,
\begin{equation*}
\left(\int_{\mathbb{R}^2_+}\frac{|\mathbf{v}|^m}{e+|\mathbf{x}|^2}(\log(e+|\mathbf{x}|^2))^{-\beta}\mathrm{d}\mathbf{x}\right)^{1/m}
\leq C\|\mathbf{v}\|_{L^2(\widetilde{B}_1)}+C\|\nabla\mathbf{v}\|_{L^2(\mathbb{R}^2_+)}.
\end{equation*}
\end{lemma}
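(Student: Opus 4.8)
The plan is to deduce the inequality from its whole-plane counterpart, \cite[Theorem B.1]{PL96}, via the odd reflection across the flat boundary $\partial\mathbb{R}^2_+=\{x_2=0\}$, exactly as announced before the statement. Given $\mathbf{v}=(v^1,v^2)\in H^1_0(\mathbb{R}^2_+)$, I would introduce the componentwise odd extension
\[
\tilde{\mathbf{v}}(x_1,x_2)\triangleq
\begin{cases}
\mathbf{v}(x_1,x_2), & x_2>0,\\
-\mathbf{v}(x_1,-x_2), & x_2<0,
\end{cases}
\]
and first check that $\tilde{\mathbf{v}}\in H^1(\mathbb{R}^2)$: because $\operatorname{tr}\mathbf{v}=\mathbf{0}$ on $\{x_2=0\}$, the reflection introduces no singular boundary term in the distributional gradient, so that $\partial_1\tilde{\mathbf{v}}$ is the odd extension of $\partial_1\mathbf{v}$ and $\partial_2\tilde{\mathbf{v}}$ is the even extension of $\partial_2\mathbf{v}$. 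This yields both membership and the two-sided norm equivalence $C^{-1}\|\mathbf{v}\|_{H^1(\mathbb{R}^2_+)}\le\|\tilde{\mathbf{v}}\|_{H^1(\mathbb{R}^2)}\le C\|\mathbf{v}\|_{H^1(\mathbb{R}^2_+)}$ recorded above (in fact with constant $\sqrt2$).

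Next I would apply \cite[Theorem B.1]{PL96} to $\tilde{\mathbf{v}}$ --- its hypotheses $m\in[2,\infty)$ and $\beta\in(1+m/2,\infty)$ being precisely the ones assumed here, and the vector-valued case following from the scalar one applied to each component via $|\tilde{\mathbf{v}}|^m\le C(m)\sum_i|\tilde v^i|^m$ --- to obtain
\[
\left(\int_{\mathbb{R}^2}\frac{|\tilde{\mathbf{v}}|^m}{e+|\mathbf{x}|^2}(\log(e+|\mathbf{x}|^2))^{-\beta}\,\mathrm{d}\mathbf{x}\right)^{1/m}\le C\|\tilde{\mathbf{v}}\|_{L^2(B_1)}+C\|\nabla\tilde{\mathbf{v}}\|_{L^2(\mathbb{R}^2)}.
\]
It then remains to translate this back to $\mathbb{R}^2_+$ by a parity argument. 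The weight $(e+|\mathbf{x}|^2)^{-1}(\log(e+|\mathbf{x}|^2))^{-\beta}$ is radial, hence invariant under $x_2\mapsto-x_2$, and $|\tilde{\mathbf{v}}(x_1,x_2)|=|\mathbf{v}(x_1,-x_2)|$ for $x_2<0$ since the sign change in the odd extension does not affect the Euclidean length; therefore the weighted integral over $\mathbb{R}^2$ equals twice the one over $\mathbb{R}^2_+$, and likewise $\|\nabla\tilde{\mathbf{v}}\|_{L^2(\mathbb{R}^2)}^2=2\|\nabla\mathbf{v}\|_{L^2(\mathbb{R}^2_+)}^2$ and $\|\tilde{\mathbf{v}}\|_{L^2(B_1)}^2=2\|\mathbf{v}\|_{L^2(\widetilde{B}_1)}^2$. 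Substituting these identities and absorbing the resulting factors $2^{1/m}$ and $\sqrt2$ into $C$ produces the claimed bound.

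The argument is essentially routine; the only delicate point is the very first one, namely that the odd reflection of an $H^1$ field on the half-plane lands in $H^1$ of the full plane without a spurious boundary measure in the gradient --- this is exactly where the zero-trace condition $\mathbf{v}\in H^1_0(\mathbb{R}^2_+)$, as opposed to merely $\mathbf{v}\in H^1(\mathbb{R}^2_+)$, is indispensable. Everything after that is a direct appeal to the known whole-space inequality together with the bookkeeping of symmetric integrals and constants.
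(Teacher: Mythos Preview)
Your proposal is correct and follows precisely the route the paper indicates: odd extension to $\mathbb{R}^2$ (using the zero trace to ensure $\tilde{\mathbf{v}}\in H^1(\mathbb{R}^2)$), invocation of \cite[Theorem B.1]{PL96}, and the parity bookkeeping to halve the integrals. The paper in fact gives no further detail beyond the sentence preceding the lemma, so your write-up is a faithful and more explicit execution of the same argument.
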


Based on Lemma \ref{l2.3} and Poincar\'e's inequality, we obtain the key bound for $\|\varrho\mathbf{v}\|_{L^4}$ with a nonnegative function $\varrho \in L^1\cap L^\zeta$ ($\zeta<\theta$), which extends the inequality in \cite[Lemma 2.4]{LX19} with $\varrho \in L^1(\mathbb{R}^2)\cap L^\infty(\mathbb{R}^2)$.
\begin{lemma}\label{l2.4}
Let $\zeta=4(2\alpha+1)/(\alpha-1)$ with $\alpha\in(1,\infty)$ and $\bar{x}$ be as in \eqref{1.10}. Assume that the nonnegative function $\varrho\in L^1(\mathbb{R}^2_+)\cap L^\zeta(\mathbb{R}^2_+)$ satisfies
\begin{equation*}
M_2\leq \int_{\widetilde{B}_{\eta_*}}\varrho\mathrm{d}\mathbf{x}\leq
\|\varrho\|_{L^1(\mathbb{R}^2_+)}=M_1,\ \ \|\varrho\|_{L^\zeta(\mathbb{R}^2_+)}\leq M_3,\ \
  \bar{x}^\alpha\varrho\in L^1(\mathbb{R}^2_+)
\end{equation*}
for positive constants $M_1, M_2, M_3$, and $\eta_*\geq1$, where $\widetilde{B}_{\eta_*}=B_{\eta_*}\cap\mathbb{R}^2_+$. Then there exists a constant $C>0$ depending only on $M_1$, $M_2$, and $M_3$ such that, for $\mathbf{v}\in D^{1}_0(\mathbb{R}^2_+)$ verifying $\sqrt{\varrho}\mathbf{v}\in L^2(\mathbb{R}^2_+)$,
\begin{align}\label{2.1}
  \|\varrho\mathbf{v}\|_{L^4(\mathbb{R}^2_+)}^4
   \leq
  C\eta_*^8(1+\|\bar{x}^\alpha\varrho\|_{L^{1}(\mathbb{R}^2_+)})
 \big(\|\sqrt{\varrho}\mathbf{v}\|_{L^2(\mathbb{R}^2_+)}^4+\|\nabla\mathbf{v}\|_{L^2(\mathbb{R}^2_+)}^4 \big).
\end{align}
\end{lemma}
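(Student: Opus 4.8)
The plan is to reduce the estimate for $\|\varrho\mathbf{v}\|_{L^4}$ to the weighted Sobolev inequality of Lemma~\ref{l2.3} by splitting the integral into a near-field ball $\widetilde{B}_{\eta_*}$ and its complement, and using the weight $\bar{x}$ to absorb the algebraic growth of $(\log(e+|\mathbf{x}|^2))^{-\beta}$-type factors. First I would observe that on $\widetilde{B}_{\eta_*}$ one has $\varrho\le\varrho^{1/4}\varrho^{3/4}$ with a Hölder split tuned to the exponents $1$ and $\zeta$: writing $\varrho^4|\mathbf{v}|^4=(\varrho|\mathbf{v}|)^4$ and estimating $\|\varrho\mathbf{v}\|_{L^4(\widetilde{B}_{\eta_*})}\le\|\varrho\|_{L^{p}(\widetilde{B}_{\eta_*})}\|\mathbf{v}\|_{L^{q}(\widetilde{B}_{\eta_*})}$ with $1/p+1/q=1/4$, choosing $q$ large but finite (say $q=8$, so $p=8$), then controlling $\|\mathbf{v}\|_{L^8(\widetilde{B}_{\eta_*})}$ by $\|\mathbf{v}\|_{L^2(\widetilde{B}_1)}+\|\nabla\mathbf{v}\|_{L^2}$ via the two-dimensional Gagliardo–Nirenberg/Poincaré inequality after a scaling that introduces the factor $\eta_*$, and bounding $\|\varrho\|_{L^8(\widetilde{B}_{\eta_*})}$ by interpolation between $M_1=\|\varrho\|_{L^1}$ and $M_3=\|\varrho\|_{L^\zeta}$ (valid since $\zeta>8$). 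The $\sqrt\varrho\mathbf{v}\in L^2$ bound enters through $\|\mathbf{v}\|_{L^2(\widetilde{B}_1)}$: using $\int_{\widetilde{B}_{\eta_*}}\varrho\ge M_2$ together with a Poincaré-type argument, one controls $\|\mathbf{v}\|_{L^2(\widetilde{B}_1)}$ by $C(M_1,M_2)\bigl(\|\sqrt\varrho\mathbf{v}\|_{L^2}+\|\nabla\mathbf{v}\|_{L^2}\bigr)$.

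For the far-field part $\mathbb{R}^2_+\setminus\widetilde{B}_{\eta_*}$, the idea is to write
\[
\varrho^4|\mathbf{v}|^4
=\Bigl(\bar{x}^\alpha\varrho\Bigr)\cdot\Bigl(\varrho^{3}\bar{x}^{-\alpha}\cdot\frac{|\mathbf{v}|^4}{1}\Bigr),
\]
or, more carefully, to peel off one power of the $L^1$-weighted density $\bar{x}^\alpha\varrho$ and then use Hölder with the remaining three powers of $\varrho$ placed in $L^\zeta$-type spaces, while the $|\mathbf{v}|^4$ factor is estimated in a weighted $L^m$ space of the form appearing in Lemma~\ref{l2.3} with $m$ chosen so that $m\ge2$ and $\beta$ lies in $(1+m/2,\infty)$. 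The exponent $\zeta=4(2\alpha+1)/(\alpha-1)$ is precisely what makes this Hölder balance work: it is the value for which the exponent conjugate to three copies of $L^\zeta$ and one copy of the $\bar{x}^\alpha$-weighted factor matches the admissible weighted-$L^m$ exponent for $\mathbf{v}$. Concretely, one writes $\varrho^4|\mathbf{v}|^4 = \varrho^{4-a}\cdot\bigl(\varrho^a \bar x^{\alpha}\bigr)\cdot \bar x^{-\alpha}|\mathbf{v}|^4$ with a suitable small $a$, applies Hölder with three factors, uses $\bar{x}^{-\alpha}|\mathbf{v}|^4 = \bigl(\bar{x}^{-\alpha/4}|\mathbf{v}|\bigr)^4$ and the structure of $\bar{x}$ (which contains a $\log^2$ factor contributing the required $\beta$), and then invokes Lemma~\ref{l2.3} on $\|\bar x^{-\alpha/4}\mathbf{v}\|_{L^m}$. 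Collecting the two regions and using $\|\nabla\mathbf{v}\|_{L^2(\widetilde B_1)}+\|\nabla\mathbf v\|_{L^2(\mathbb{R}^2_+\setminus\widetilde B_1)}=\|\nabla\mathbf v\|_{L^2}$, raising to the fourth power, and tracking the $\eta_*$-dependence through the scaling in the near-field Sobolev inequality yields \eqref{2.1} with the claimed $\eta_*^8$ factor.

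The main obstacle I anticipate is the bookkeeping of exponents in the far-field Hölder split: one must choose the fraction $a$ of density powers assigned to the $\bar{x}^\alpha$-weighted factor, the exponent $m$ for $\mathbf{v}$, and the remaining Lebesgue exponent for $\varrho$ so that (i) the exponent for $\varrho$ does not exceed $\zeta$, (ii) $m\ge2$ and the induced logarithmic weight exponent $\beta$ satisfies $\beta>1+m/2$, and (iii) the Hölder exponents sum to $1$. Verifying that $\zeta=4(2\alpha+1)/(\alpha-1)$ is exactly the threshold that closes all three constraints simultaneously — and that the $\log^2$ in the definition \eqref{1.10} of $\bar{x}$ supplies just enough room for $\beta$ — is the delicate computational heart of the argument; everything else (the near-field interpolation, the Poincaré step linking $\|\mathbf v\|_{L^2(\widetilde B_1)}$ to $\|\sqrt\varrho\mathbf v\|_{L^2}$, and the scaling producing $\eta_*^8$) follows the template of \cite[Lemma~2.4]{LX19} with only the routine modification of replacing the $L^\infty$ density bound by an $L^\zeta$ bound via interpolation.
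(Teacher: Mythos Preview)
Your proposal identifies the right ingredients (the Poincar\'e-type step exploiting $\int_{\widetilde B_{\eta_*}}\varrho\ge M_2$, the weighted H\"older split involving $\bar x^\alpha\varrho$ and $\varrho\in L^\zeta$, and Lemma~\ref{l2.3}), so it is essentially correct. However, the paper's argument is organized differently and more cleanly, and your bookkeeping of the $\eta_*$ powers is slightly off.

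First, the paper does \emph{not} split into near-field and far-field. Instead it carries out a single global four-factor H\"older inequality on all of $\mathbb{R}^2_+$:
\[
\|\varrho\mathbf{v}\|_{L^4}^4\le
\|(\bar x^\alpha\varrho)^{\frac{2}{\alpha+1}}\|_{L^{\frac{\alpha+1}{2}}}
\|\bar x^{-\frac{2\alpha}{\alpha+1}}|\mathbf v|^4\|_{L^{\frac{2(\alpha+1)}{\alpha-1}}}
\|\varrho^{\frac{\alpha-1}{\alpha+1}}\|_{L^{\frac{4(\alpha+1)(2\alpha+1)}{(\alpha-1)^2}}}
\|\varrho^3\|_{L^{\frac{4(2\alpha+1)}{3(\alpha-1)}}},
\]
with $m=\tfrac{8(\alpha+1)}{\alpha-1}$ and $\beta=\tfrac{8\alpha}{\alpha-1}>1+\tfrac m2$ in Lemma~\ref{l2.3}. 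The two density factors land exactly in $L^\zeta$, which is how $\zeta=4(2\alpha+1)/(\alpha-1)$ appears. Your separate near-field treatment is therefore redundant: the weighted-$L^m$ bound from Lemma~\ref{l2.3} already covers the ball $\widetilde B_{\eta_*}$.

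Second, and more importantly, the factor $\eta_*^8$ does \emph{not} come from any Sobolev scaling on the near-field ball. It comes entirely from the Poincar\'e step: one first shows
\[
\|\mathbf v\|_{L^2(\widetilde B_{\eta_*})}^2\le C\eta_*^4\bigl(1+\|\varrho\|_{L^2}^2\bigr)\|\nabla\mathbf v\|_{L^2}^2+C\eta_*^2\|\sqrt\varrho\,\mathbf v\|_{L^2}^2
\]
(zero-average Poincar\'e on $\widetilde B_{\eta_*}$ plus the mass lower bound), then trivially $\|\mathbf v\|_{L^2(\widetilde B_1)}\le\|\mathbf v\|_{L^2(\widetilde B_{\eta_*})}$ feeds into Lemma~\ref{l2.3}, and raising to the fourth power produces $\eta_*^8$. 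Your claim that one controls $\|\mathbf v\|_{L^8(\widetilde B_{\eta_*})}$ by $\|\mathbf v\|_{L^2(\widetilde B_1)}+\|\nabla\mathbf v\|_{L^2}$ via Gagliardo--Nirenberg is the wrong way around: Sobolev/GN on $\widetilde B_{\eta_*}$ needs $\|\mathbf v\|_{L^2(\widetilde B_{\eta_*})}$, not $\|\mathbf v\|_{L^2(\widetilde B_1)}$, and tracking the scaling there would actually give a smaller power of $\eta_*$ than $8$. So drop the near/far decomposition, write the Poincar\'e bound on $\widetilde B_{\eta_*}$ explicitly, and let the global H\"older split do the rest.
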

\begin{proof}
Define the average of $f$ over the domain $\Omega\subset\mathbb{R}^2_+$ as
\begin{equation*}
  \langle f\rangle_{\Omega}\triangleq \frac{1}{|\Omega|}\int_{\Omega}f(\mathbf{x})\mathrm{d}\mathbf{x}.
\end{equation*}
Using the Poincar\'e inequality in the zero-average form, we deduce from the odd extension argument
that
\begin{equation*}
  \|\mathbf{v}-\langle\mathbf{v}\rangle_{\widetilde{B}_{\eta_*}}\|_{L^2(\widetilde{B}_{\eta_*})}\leq C\eta_*\|\nabla\mathbf{v}\|_{L^2(\widetilde{B}_{\eta_*})},
\end{equation*}
which along with H\"older's inequality yields that
\begin{align*}
  \big|\langle\varrho\rangle_{\widetilde{B}_{\eta_*}}\langle\mathbf{v}\rangle_{\widetilde{B}_{\eta_*}}\big|
  &=\left|\frac{1}{|\widetilde{B}_{\eta_*}|}\int_{\widetilde{B}_{\eta_*}}\big(\langle\varrho\rangle_{\widetilde{B}_{\eta_*}}-\varrho\big)
  \big(\mathbf{v}-\langle\mathbf{v}\rangle_{\widetilde{B}_{\eta_*}}\big)\mathrm{d}\mathbf{x}
  +\frac{1}{|\widetilde{B}_{\eta_*}|}\int_{\widetilde{B}_{\eta_*}}\varrho\mathbf{v}
  \mathrm{d}\mathbf{x}\right|\notag\\
  &\leq C\eta_*^{-2}\big(\|\varrho\|_{L^2(\widetilde{B}_{\eta_*})}+\eta_*^{-1}M_1\big)\|\mathbf{v}-\langle\mathbf{v}\rangle_{\widetilde{B}_{\eta_*}}\|_{L^2(\widetilde{B}_{\eta_*})}
  +C\eta_*^{-2}\|\sqrt{\varrho}\|_{L^2(\widetilde{B}_{\eta_*})}\|\sqrt{\varrho}\mathbf{v}\|_{L^2(\widetilde{B}_{\eta_*})}\notag\\
  &\leq
   C\eta_*^{-1}\big(\|\varrho\|_{L^2(\widetilde{B}_{\eta_*})}+M_1\big)\|\nabla\mathbf{v}\|_{L^2(\widetilde{B}_{\eta_*})}
   +C\eta_*^{-2}M_1^{1/2}\|\sqrt{\varrho}\mathbf{v}\|_{L^2(\widetilde{B}_{\eta_*})}.
\end{align*}

Hence, we have
\begin{equation*}
  \big|\langle\mathbf{v}\rangle_{\widetilde{B}_{\eta_*}}\big|
  \leq
  C(M_1,M_2)\eta_*\big(1+\|\varrho\|_{L^2(\widetilde{B}_{\eta_*})}\big)\|\nabla\mathbf{v}\|_{L^2(\widetilde{B}_{\eta_*})}
   +C(M_1,M_2)\|\sqrt{\varrho}\mathbf{v}\|_{L^2(\widetilde{B}_{\eta_*})},
\end{equation*}
which implies that
\begin{align}\label{2.2}
  \|\mathbf{v}\|_{L^2(\widetilde{B}_{\eta_*})}^2&\leq
  2\int_{\widetilde{B}_{\eta_*}}\big|\mathbf{v}-\langle\mathbf{v}\rangle_{\widetilde{B}_{\eta_*}}\big|^2\mathrm{d}\mathbf{x}
  +2\big|\widetilde{B}_{\eta_*}\big|\big|\langle\mathbf{v}\rangle_{\widetilde{B}_{\eta_*}}\big|^2\notag\\
  &\leq C(M_1,M_2)\eta_*^4\big(1+\|\varrho\|_{L^2(\widetilde{B}_{\eta_*})}^2\big)\|\nabla\mathbf{v}\|_{L^2(\widetilde{B}_{\eta_*})}^2
   +C(M_1,M_2)\eta_*^2\|\sqrt{\varrho}\mathbf{v}\|_{L^2(\widetilde{B}_{\eta_*})}^2.
\end{align}
The local estimate on a half-ball together with Lemma \ref{l2.3} leads to
\begin{align}\label{2.3}
  \|\varrho\mathbf{v}\|_{L^4(\mathbb{R}^2_+)}^4
   &\leq
  \|(\bar{x}^\alpha\varrho)^\frac{2}{\alpha+1}\|_{L^{\frac{\alpha+1}{2}}(\mathbb{R}^2_+)}
  \|\bar{x}^{-\frac{2\alpha}{\alpha+1}}|\mathbf{v}|^4\|_{L^{\frac{2(\alpha+1)}{\alpha-1}}(\mathbb{R}^2_+)}
  \|\varrho^{\frac{\alpha-1}{\alpha+1}}\|_{L^{\frac{4(\alpha+1)(2\alpha+1)}{(\alpha-1)^2}}(\mathbb{R}^2_+)}
  \|\varrho^{3}\|_{L^{\frac{4(2\alpha+1)}{3(\alpha-1)}}(\mathbb{R}^2_+)}\notag\\
  &\leq
  C\eta_*^8(1+\|\bar{x}^\alpha\varrho\|_{L^{1}(\mathbb{R}^2_+)})(1+\|\varrho\|_{L^2(\mathbb{R}^2_+)}^{4})
   \|\varrho\|_{L^{\frac{4(2\alpha+1)}{\alpha-1}}(\mathbb{R}^2_+)}^{3+\frac{\alpha-1}{\alpha+1}}
 \big(\|\sqrt{\varrho}\mathbf{v}\|_{L^2(\mathbb{R}^2_+)}+\|\nabla\mathbf{v}\|_{L^2(\mathbb{R}^2_+)} \big)^4\notag\\
 &\leq
  C(M_1,M_2,M_3)\eta_*^8(1+\|\bar{x}^\alpha\varrho\|_{L^{1}(\mathbb{R}^2_+)})
 \big(\|\sqrt{\varrho}\mathbf{v}\|_{L^2(\mathbb{R}^2_+)}^4+\|\nabla\mathbf{v}\|_{L^2(\mathbb{R}^2_+)}^4 \big),
\end{align}
where we have chosen
\begin{equation*}
  \beta=2\times\frac{4\alpha}{\alpha-1}>1+\frac{4(\alpha+1)}{\alpha-1}.\tag*{\qedhere}
\end{equation*}
\end{proof}

Finally, we introduce a variant of Zlotnik's inequality (see \cite{Zlo}) to derive the uniform (in time) upper bound of $\|\rho\|_{L^\theta}$.

\begin{lemma}\label{lzlo}
Let the function $y$ satisfy\footnote{Although \cite[Lemma 1.3(b)]{Zlo} states the result for $y'(t)=g(y) + b'(t)$, its proof actually works for the inequality as well.}
\begin{equation*}
y'(t)\leq g(y) + b'(t) \ \ \text{on} \ [0, T], \quad y(0) = y^{0},
\end{equation*}
with $g \in C(\mathbb{R})$ and $y, b \in W^{1,1}(0, T)$. If $g(\infty)=-\infty$ and
\begin{equation*}
b(t_{2})-b(t_{1}) \leq N_{0} + N_{1}(t_{2} - t_{1})
\end{equation*}
for all $0 \leq t_{1} < t_{2} \leq T$ with some $N_{0} \geq 0$ and $N_{1} \geq 0$, then
\begin{equation*}
y(t) \leq \max \{y^{0}, \xi^*\} + N_{0} < \infty \ \ \text{on} \ [0, T],
\end{equation*}
where $\xi^*$ is a constant such that
\begin{equation*}
g(\xi) \leq -N_{1} \ \ \text{for} \ \xi \geq \xi^*.
\end{equation*}
\end{lemma}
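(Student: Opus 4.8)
The plan is to prove Lemma \ref{lzlo} by a comparison argument organized around the threshold $\xi^*$. Since $g(\infty)=-\infty$, there is a level $\xi^*$ above which $g(\xi)\le -N_1$; the whole point is that on any interval where $y$ stays above $\xi^*$, the differential inequality forces $y'\le -N_1+b'$, so the affine control $b(t_2)-b(t_1)\le N_0+N_1(t_2-t_1)$ on the increments of $b$ prevents $y$ from escaping and produces precisely the constant $N_0$ in the final estimate. First I would record the regularity facts: since $y,b\in W^{1,1}(0,T)$ they are absolutely continuous on $[0,T]$, hence continuous; the fundamental theorem of calculus is available; the inequality $y'(t)\le g(y(t))+b'(t)$ holds for a.e.\ $t$; and $g\circ y$ is continuous, hence integrable on $[0,T]$.

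Next, fix $t\in(0,T]$. If $y(t)\le\xi^*$, then $y(t)\le\xi^*+N_0\le\max\{y^0,\xi^*\}+N_0$ because $N_0\ge0$, and there is nothing to prove. Otherwise, set
\[
s_0\triangleq\sup\{s\in[0,t]:y(s)\le\xi^*\},
\]
with the convention $s_0=0$ when this set is empty. By continuity of $y$ the set $\{s\in[0,t]:y(s)\le\xi^*\}$ is closed, so either it is empty and $y>\xi^*$ on all of $[0,t]$ (in particular $y(0)=y^0>\xi^*$), or $s_0\in[0,t)$ with $y(s_0)\le\xi^*$; in both cases $y(s)>\xi^*$ for every $s\in(s_0,t]$. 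Hence $g(y(s))\le -N_1$ for a.e.\ $s\in(s_0,t]$, so $y'(s)\le -N_1+b'(s)$ there, and integrating over $[s_0,t]$ together with the hypothesis on $b$ gives
\[
y(t)-y(s_0)\le -N_1(t-s_0)+\big(b(t)-b(s_0)\big)\le N_0.
\]
Since $y(s_0)\le\xi^*$ when $s_0\in[0,t)$ and $y(s_0)=y(0)=y^0$ when the defining set is empty, in either case $y(s_0)\le\max\{y^0,\xi^*\}$, whence $y(t)\le\max\{y^0,\xi^*\}+N_0$. As $t\in(0,T]$ was arbitrary and the bound holds trivially at $t=0$, this proves the claim; finiteness is clear since $\xi^*<\infty$.

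The argument is elementary, and the only points requiring genuine care are measure-theoretic rather than conceptual: one needs the absolute continuity of $y$ and $b$ to integrate the a.e.\ differential inequality, and the continuity of $y$ both to define the "last crossing time'' $s_0$ and to guarantee $y(s_0)\le\xi^*$ there. I would emphasize that the comparison is carried out \emph{backwards in time from each $t$}, locating the most recent instant at which $y$ last touched $\xi^*$, rather than via a single global Gronwall estimate; this is exactly what makes the constant $N_0$ — and not a $t$-dependent quantity — appear in the conclusion. No smallness or extra structural input beyond the stated hypotheses on $g$ and $b$ is used.
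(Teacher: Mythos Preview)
Your proof is correct. The paper does not actually prove this lemma; it merely states it and cites Zlotnik's original work \cite{Zlo}, noting in a footnote that the argument there, given for the equality $y'=g(y)+b'$, carries over to the inequality. Your last-crossing-time argument is the standard way to establish this kind of comparison bound and is essentially what one finds in \cite{Zlo}: locate the most recent time $s_0\le t$ at which $y$ was at or below $\xi^*$, use $g(y)\le -N_1$ on $(s_0,t]$ to get $y'\le -N_1+b'$ there, and integrate together with the increment hypothesis on $b$. The measure-theoretic points you flag (absolute continuity of $y,b\in W^{1,1}$, closedness of $\{y\le\xi^*\}$ so that $y(s_0)\le\xi^*$) are exactly the ones that need checking, and you handle them correctly.
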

\subsection{An exposition on the boundary}\label{sec2.2}

In this subsection we explore the boundary condition in more detail and provide partial $L^p$-estimates for the effective viscous flux $F$ and $\nabla\mathbf{u}$.

Using \eqref{1.6} and the vector identity $\Delta \mathbf{u}=\nabla\divv \mathbf{u}-\nabla^\bot\curl \mathbf{u}$, the momentum equation $\eqref{a1}_2$ reads
\begin{equation}\label{2.4}
\rho\dot{\mathbf{u}}=\nabla F-\mu\nabla^\bot\curl\mathbf{u}.
\end{equation}
In particular, if we let
\begin{equation*}
  (u^1,u^2)=(u^1,0), \ \ \text{on} \ \partial\mathbb{R}^2_+,
\end{equation*}
then $\dot{u}^2=0$ on $\partial\mathbb{R}^2_+$, which along with \eqref{2.4} implies that
\begin{align}\label{2.5}
0=\rho\dot{\mathbf{u}}\cdot\mathbf{n}=\mathbf{n}\cdot(\nabla F-\mu\nabla^\bot\curl\mathbf{u}) =\mathbf{n}\cdot\nabla F+\mu\partial_1\partial_2u^1, \ \ \text{on} \ \partial\mathbb{R}^2_+,
\end{align}
where $\mathbf{n}=(0,-1)$ is the unit outward normal vector to $\partial\mathbb{R}^2_+$.
Indeed, the Navier-slip boundary conditions $(u^1,u^2)|_{x_2=0}=(\partial_2u^1,0)|_{x_2=0}$ (see \cite{Hoff05}) supply the boundary information needed to exploit the elliptic structure of $F$.

By contrast, the Dirichlet boundary conditions \eqref{a4} do not seem to be of use here, and we cannot directly apply the standard elliptic theory for $F$. Fortunately, although no suitable boundary condition exists, the particular feature of the flat boundary suggests that
\begin{equation*}
  x_2F=-F\mathbf{x}\cdot\mathbf{n}=0, \ \ \text{on} \ \partial\mathbb{R}^2_+=\{x_2=0\}.
\end{equation*}
This observation, along with the boundary conditions \eqref{a4}, directs our attention to  $\Delta\mathbf{u}$ and $\Delta(\mathbf{x}F)$ instead.
Moreover, we need to treat boundary effects separately, for which the Green function method presents itself as a natural choice.

In detail, from $\eqref{a1}_2$ we consider the vector-valued Poisson equations
\begin{equation}\label{2.6}
\begin{cases}
\mu\Delta\mathbf{u}=\rho\dot{\mathbf{u}}-\frac{\mu+\lambda}{2\mu+\lambda}\nabla F+\frac{\mu}{2\mu+\lambda}\nabla P, & \mathbf{x}\in\mathbb{R}^2_+,\\
\mathbf{u}=0, & \mathbf{x}\in\partial\mathbb{R}^2_+.
\end{cases}
\end{equation}
Let $\mathbb{G}=\mathbb{G}_{ij}(\mathbf{x},\mathbf{y})\in C^\infty(\mathbb{R}^2_+\times\mathbb{R}^2_+\backslash D)$ with $D\triangleq\{(\mathbf{x},\mathbf{y})\in \mathbb{R}^2_+\times\mathbb{R}^2_+: \mathbf{x}=\mathbf{y}\}$ be Green matrix of system \eqref{2.6}, and let $\widetilde{G}\triangleq \mathbb{G}_{11}=\mathbb{G}_{22}$ be the Green function for the scalar Laplacian.
A function $\mathbf{v}$ can thus be represented explicitly as
\begin{equation}\label{2.7}
  \mathbf{v}(\mathbf{x})=\int\mathbb{G}(\mathbf{x},\mathbf{y})\cdot\Delta \mathbf{v}(\mathbf{y})\mathrm{d}\mathbf{y}+\int_{\partial\mathbb{R}^2_+}
  \frac{\partial\mathbb{G}(\mathbf{x},\mathbf{y})}{\partial\mathbf{n}_\mathbf{y}}\cdot \mathbf{v}(\mathbf{y})\mathrm{d}s
  \triangleq \mathbb{G}_0(\Delta \mathbf{v})+\mathbb{G}_b(\mathbf{v}).
\end{equation}
Correspondingly, for a scalar function $h$, we have
\begin{equation}\label{2.8}
 h(\mathbf{x})=\int\widetilde{G}(\mathbf{x},\mathbf{y})\Delta h(\mathbf{y})\mathrm{d}\mathbf{y}+\int_{\partial\mathbb{R}^2_+}
  \frac{\partial\widetilde{G}(\mathbf{x},\mathbf{y})}{\partial\mathbf{n}_\mathbf{y}} h(\mathbf{y})\mathrm{d}s
  \triangleq G_0(\Delta h)+G_b(h).
\end{equation}

Next, using the Green function representation, we present a decomposition of $F$.

\begin{lemma}\label{l2.6}
Let $(\rho,\mathbf{u})$ be a smooth solution to the problem \eqref{a1}--\eqref{a4}.
For the effective viscous flux $F$ as in \eqref{1.6}, it holds that
\begin{align}\label{2.9}
F= \widetilde{F}+\frac{2\mu}{3\mu+\lambda}\mathcal{Q}_3
\triangleq\Big[\mathcal{Q}_1(\rho\dot{\mathbf{u}})+\frac{\mu+\lambda}{3\mu+\lambda}\mathcal{Q}_2(\rho\dot{\mathbf{u}})
\Big]+\frac{2\mu}{3\mu+\lambda}\mathcal{Q}_3(P),
\end{align}
where
\begin{equation*}
\begin{cases}
\mathcal{Q}_1=\mathcal{Q}_1(\rho\dot{\mathbf{u}})\triangleq\divv\mathbb{G}_0(\rho\dot{\mathbf{u}}),\notag\\
\mathcal{Q}_2=\mathcal{Q}_2(\rho\dot{\mathbf{u}})\triangleq
\divv\mathbb{G}_0(\mathbf{x}\divv(\rho\dot{\mathbf{u}}))
 -\mathbf{x}\cdot\nabla G_0(\divv(\rho\dot{\mathbf{u}}))+\divv\mathbb{G}_0(\rho\dot{\mathbf{u}})-G_0(\divv(\rho\dot{\mathbf{u}})), \notag\\
 \mathcal{Q}_3=\,
\mathcal{Q}_3(P)\,\triangleq\divv\mathbb{G}_0(\nabla P)-P.
\end{cases}
\end{equation*}
\end{lemma}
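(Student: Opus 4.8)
The plan is to derive the decomposition \eqref{2.9} by combining two Green-representation formulas: one for $\mathbf{u}$ itself via the vector Green matrix $\mathbb{G}$, and one for the auxiliary quantity $\mathbf{x}F$ via the scalar Green function $\widetilde{G}$, exploiting the crucial boundary vanishing $x_2F=0$ on $\partial\mathbb{R}^2_+$. First I would write $F$ in terms of $\divv\mathbf{u}$ and $P$; the issue is that $\divv\mathbf{u}$ has no good boundary data under the Dirichlet condition, so I cannot directly represent $F$ by the scalar Green function. Instead, I would start from the Poisson system \eqref{2.6} for $\mathbf{u}$: since $\mathbf{u}=0$ on $\partial\mathbb{R}^2_+$, the boundary term $\mathbb{G}_b(\mathbf{u})$ in \eqref{2.7} vanishes, giving $\mathbf{u}=\mathbb{G}_0(\Delta\mathbf{u})=\tfrac1\mu\mathbb{G}_0\!\big(\rho\dot{\mathbf u}-\tfrac{\mu+\lambda}{2\mu+\lambda}\nabla F+\tfrac{\mu}{2\mu+\lambda}\nabla P\big)$. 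Taking $\divv$ of both sides and using $\divv\mathbf{u}=\tfrac{1}{2\mu+\lambda}(F+P)$ yields one scalar identity relating $F$, $\mathcal{Q}_1(\rho\dot{\mathbf u})=\divv\mathbb{G}_0(\rho\dot{\mathbf u})$, $\divv\mathbb{G}_0(\nabla F)$, and $\divv\mathbb{G}_0(\nabla P)$.

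The remaining obstruction is the term $\divv\mathbb{G}_0(\nabla F)$, which still involves $F$ on the right-hand side; to eliminate it I would integrate by parts to move the gradient onto the Green kernel and rewrite $\divv\mathbb{G}_0(\nabla F)$ in terms of $\mathbf{x}F$. The key algebraic step is the identity $\nabla F = \divv(F\,\mathbb{I})$ combined with the product rule $\divv(\mathbf{x}F) = 2F + \mathbf{x}\cdot\nabla F$, i.e. $\mathbf{x}\cdot\nabla F = \divv(\mathbf{x}F)-2F$. Since $x_2F$ vanishes on the boundary, the scalar function $\mathbf{x}F$ (componentwise) admits the clean representation $x_jF=G_0(\Delta(x_jF))$ with no boundary contribution. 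Expanding $\Delta(x_jF)=x_j\Delta F+2\partial_jF$ and assembling the components, I can express $\mathbf{x}\cdot\nabla G_0(\divv(\rho\dot{\mathbf u}))$, $\divv\mathbb{G}_0(\mathbf{x}\divv(\rho\dot{\mathbf u}))$ and the lower-order pieces $G_0(\divv(\rho\dot{\mathbf u}))$, $\divv\mathbb{G}_0(\rho\dot{\mathbf u})$ precisely in the combination defining $\mathcal{Q}_2$. Substituting $\Delta F = \divv(\rho\dot{\mathbf u})$ (which follows by taking $\divv$ of \eqref{2.4}, using $\divv\nabla^\bot\curl\mathbf u=0$) converts all $F$-dependence on the right into data terms.

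Collecting everything, the scalar identity becomes $F = c_1\mathcal{Q}_1(\rho\dot{\mathbf u}) + c_2\mathcal{Q}_2(\rho\dot{\mathbf u}) + c_3\mathcal{Q}_3(P)$ with explicit rational constants in $\mu,\lambda$; matching them against \eqref{2.9} is then a bookkeeping exercise. Concretely, one has to verify $c_1=1$, $c_2=\tfrac{\mu+\lambda}{3\mu+\lambda}$, $c_3=\tfrac{2\mu}{3\mu+\lambda}$, which amounts to solving a linear relation of the form $(2\mu+\lambda)F = \mu(F+P) + (\text{Green terms})$ after clearing the $\divv\mathbb{G}_0(\nabla F)$ coefficient. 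The main obstacle I anticipate is the careful justification of the integrations by parts and the manipulation of $\divv\mathbb{G}_0(\nabla(\cdot))$ near the boundary: one must confirm that no boundary integrals are generated, which rests precisely on the Dirichlet condition $\mathbb{G}(\mathbf{x},\cdot)|_{\partial}=0$ for the first factor and on $x_2F|_\partial=0$ for the auxiliary scalar quantity, and that the kernel singularities are integrable so that Fubini and the symmetry $\mathbb{G}_{11}=\mathbb{G}_{22}=\widetilde G$ are legitimate. Since the statement is for smooth solutions with the decay \eqref{a3}, all these manipulations are licit, but they must be arranged in the correct order — first eliminate the boundary terms using the zero boundary data, then transfer derivatives onto the kernels, then substitute $\Delta F=\divv(\rho\dot{\mathbf u})$ — to arrive cleanly at the three operators $\mathcal{Q}_1,\mathcal{Q}_2,\mathcal{Q}_3$ as defined.
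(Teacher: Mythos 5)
Your overall plan — combine the Green representation of $\mathbf{u}$ (with vanishing boundary term because $\mathbf{u}|_{\partial}=0$) with a Green representation of the auxiliary quantity $\mathbf{x}F$, and then use $\Delta(x_jF)=x_j\Delta F+2\partial_jF$ and $\Delta F=\divv(\rho\dot{\mathbf u})$ to eliminate all $F$-dependence on the right — is indeed the skeleton of the paper's argument. But there is a genuine and load-bearing gap in the middle of your proposal.

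You write: ``Since $x_2F$ vanishes on the boundary, the scalar function $\mathbf{x}F$ (componentwise) admits the clean representation $x_jF=G_0(\Delta(x_jF))$ with no boundary contribution.'' This is false for $j=1$. On $\partial\mathbb{R}^2_+=\{y_2=0\}$ one has $y_2F(y_1,0)=0$, but $y_1F(y_1,0)$ is generically nonzero — the Dirichlet condition is on $\mathbf{u}$, not on $F$. Consequently, the Green representation of $\mathbf{x}F$ produces a nontrivial boundary term $\mathbb{G}_b(\mathbf{x}F)=\int_{\partial\mathbb{R}^2_+}\frac{\partial\mathbb{G}(\mathbf{x},\mathbf{y})}{\partial\mathbf{n}_\mathbf{y}}\cdot\mathbf{y}F(\mathbf{y})\,\mathrm ds$, which your proposal drops entirely. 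If you drop it, the identity you ``assemble'' from the components is simply wrong, and you will not land on $\mathcal{Q}_2$ with the stated coefficients.

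The paper's proof is built precisely around keeping this boundary term and making it disappear by a cancellation, not by triviality. After taking $\divv$ of the representation for $\frac{\mu+\lambda}{2(2\mu+\lambda)}\mathbf{x}F+\mu\mathbf{u}$ and substituting $F=G_0(\divv(\rho\dot{\mathbf u}))+G_b(F)$ from \eqref{2.11}, the leftover boundary contributions appear as the combination $\divv\mathbb{G}_b(\mathbf{x}F)-\mathbf{x}\cdot\nabla G_b(F)$. The key structural fact — the Euler-type homogeneity identity \eqref{2.14} for the half-plane Poisson kernel, $(\mathbf{y}-\mathbf{x})\cdot\frac{\partial\nabla_{\mathbf{x}}\widetilde G}{\partial\mathbf{n}_\mathbf{y}}\big|_{\mathbf{y}\in\partial\mathbb{R}^2_+}=\frac{\partial\widetilde G}{\partial\mathbf{n}_\mathbf{y}}\big|_{\mathbf{y}\in\partial\mathbb{R}^2_+}$ — forces this combination to collapse to $G_b(F)$, which is then re-expressed as $F-G_0(\divv(\rho\dot{\mathbf u}))$ and absorbed into the coefficient of $F$ on the left (producing the factor $\frac{3\mu+\lambda}{2(2\mu+\lambda)}$). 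That is the nontrivial heart of the lemma, and it is exactly what your proposal overlooks by assuming ``no boundary contribution.'' To fix the argument you must carry the $\mathbb{G}_b(\mathbf{x}F)$ term through, combine it with $\mathbf{x}\cdot\nabla G_b(F)$, and invoke \eqref{2.14}.
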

\begin{proof}
Applying operator $\divv$ to the system \eqref{2.4} yields
\begin{equation}\label{2.10}
  \Delta F=\divv(\rho\dot{\mathbf{u}}),
\end{equation}
which combined with \eqref{2.8} shows that
\begin{equation}\label{2.11}
  F=G_0(\divv(\rho\dot{\mathbf{u}}))+G_b(F).
\end{equation}

Note that $\Delta(\mathbf{x}F)=\mathbf{x}\Delta F+2\nabla F$. Putting this expression into \eqref{2.6} leads to
\begin{align}\label{2.12}
\frac{\mu+\lambda}{2(2\mu+\lambda)}\Delta(\mathbf{x}F)+\mu\Delta\mathbf{u}
 &=\frac{\mu+\lambda}{2(2\mu+\lambda)}\mathbf{x}\Delta F+\frac{\mu+\lambda}{2\mu+\lambda}\nabla F
 +\mu\Delta\mathbf{u}\notag
 \\&=\frac{\mu+\lambda}{2(2\mu+\lambda)}\mathbf{x}\divv(\rho\dot{\mathbf{u}})+\frac{\mu}{2\mu+\lambda}\nabla P+\rho\dot{\mathbf{u}}\triangleq U.
\end{align}
Using \eqref{2.7}, one deduces that
\begin{equation*}
  \frac{\mu+\lambda}{2(2\mu+\lambda)}\mathbf{x}F+\mu\mathbf{u}=\mathbb{G}_0(U)+\frac{\mu+\lambda}{2(2\mu+\lambda)}\mathbb{G}_b(\mathbf{x}F).
\end{equation*}
Taking operator $\divv$ on both sides of the above equality, we obtain from \eqref{2.11} that
\begin{align*}
&\divv\mathbb{G}_0(U)+\frac{\mu+\lambda}{2(2\mu+\lambda)}\divv\mathbb{G}_b(\mathbf{x}F)\\
&=\frac{\mu+\lambda}{2(2\mu+\lambda)}(2F+\mathbf{x}\cdot\nabla F)+\mu\divv\mathbf{u}\\
&=\frac{\mu+\lambda}{2\mu+\lambda}F+\frac{\mu+\lambda}{2(2\mu+\lambda)}\mathbf{x}\cdot\nabla[G_0(\divv(\rho\dot{\mathbf{u}}))+G_b(F)]
+\frac{\mu}{2\mu+\lambda}(F+P).
\end{align*}
Thus,
\begin{align}\label{2.13}
F+\frac{\mu}{2\mu+\lambda}P=\divv\mathbb{G}_0(U)-\frac{\mu+\lambda}{2(2\mu+\lambda)}\mathbf{x}\cdot\nabla G_0(\divv(\rho\dot{\mathbf{u}}))+\frac{\mu+\lambda}{2(2\mu+\lambda)}\big[\divv\mathbb{G}_b(\mathbf{x}F)-\mathbf{x}\cdot\nabla G_b(F)\big].
\end{align}

We now analyze the last boundary integral terms in the equality above.
In fact, $\mathbb{G}(\mathbf{x},\mathbf{y})=\widetilde{G}(\mathbf{x},\mathbf{y})\mathbb{I}$, where $\mathbb{I}=\{\delta_{ij}\}$ is the $2\times2$ identity matrix and
\begin{equation*}
  \widetilde{G}(\mathbf{x},\mathbf{y})=\frac{1}{2\pi}\big(\log|\mathbf{x}-\mathbf{y}|-\log|\mathbf{x}-\mathbf{y}^*|\big),
\end{equation*}
with $\mathbf{y}^*=(y_1,-y_2)$.
A straightforward computation yields the Poisson kernel
\begin{equation*}
  \frac{\partial\widetilde{G}(\mathbf{x},\mathbf{y})}{\partial\mathbf{n}_\mathbf{y}}
  =\frac{x_2}{\pi|\mathbf{x}-\mathbf{y}|^2}, \ \ \text{for} \ \mathbf{x}\in\mathbb{R}^2_+,\ \mathbf{y}\in\partial\mathbb{R}^2_+,
\end{equation*}
and hence\footnote{This Euler-type identity indicates that the boundary Poisson kernel is homogeneous of degree $-1$ in the variable $\mathbf{x}-\mathbf{y}$ (for $\mathbf{y}\in\partial\mathbb{R}^2_+$), reflecting the homogeneous scaling and the tangential translation invariance of the Green function for the half-plane.}
\begin{align}\label{2.14}
 (\mathbf{y}-\mathbf{x})\cdot \frac{\partial\nabla_\mathbf{x}\widetilde{G}(\mathbf{x},\mathbf{y})}{\partial\mathbf{n}_\mathbf{y}}\Big|_{\mathbf{y}\in\partial\mathbb{R}^2_+}
 =\frac{\partial\widetilde{G}(\mathbf{x},\mathbf{y})}{\partial\mathbf{n}_\mathbf{y}}\Big|_{\mathbf{y}\in\partial\mathbb{R}^2_+}.
\end{align}
This further implies that
\begin{align*}
  \divv\mathbb{G}_b(\mathbf{x}F)-\mathbf{x}\cdot\nabla G_b(F)&=
  \divv\int_{\partial\mathbb{R}^2_+}
  \frac{\partial\mathbb{G}(\mathbf{x},\mathbf{y})}{\partial\mathbf{n}_\mathbf{y}}\cdot \mathbf{y}F(\mathbf{y})\mathrm{d}s
  -\mathbf{x}\cdot\nabla\int_{\partial\mathbb{R}^2_+}
  \frac{\partial\widetilde{G}(\mathbf{x},\mathbf{y})}{\partial\mathbf{n}_\mathbf{y}} F(\mathbf{y})\mathrm{d}s\notag\\
  &=
  \int_{\partial\mathbb{R}^2_+}\mathbf{y}\cdot
  \frac{\partial\nabla_{\mathbf{x}}\widetilde{G}(\mathbf{x},\mathbf{y})}{\partial\mathbf{n}_\mathbf{y}} F(\mathbf{y})\mathrm{d}s
  -\int_{\partial\mathbb{R}^2_+}
  \mathbf{x}\cdot\frac{\partial\nabla_{\mathbf{x}}\widetilde{G}(\mathbf{x},\mathbf{y})}{\partial\mathbf{n}_\mathbf{y}} F(\mathbf{y})\mathrm{d}s\notag\\
  &=
  \int_{\partial\mathbb{R}^2_+}
  \frac{\partial\widetilde{G}(\mathbf{x},\mathbf{y})}{\partial\mathbf{n}_\mathbf{y}} F(\mathbf{y})\mathrm{d}s\notag\\
  &=G_b(F)=F-G_0(\divv(\rho\dot{\mathbf{u}})).
\end{align*}
Therefore, we obtain from \eqref{2.12} and \eqref{2.13} that
\begin{align*}
 \frac{3\mu+\lambda}{2(2\mu+\lambda)}F=&
 \frac{\mu+\lambda}{2(2\mu+\lambda)}\big[\divv\mathbb{G}_0(\mathbf{x}\divv(\rho\dot{\mathbf{u}}))
 -\mathbf{x}\cdot\nabla G_0(\divv(\rho\dot{\mathbf{u}}))-G_0(\divv(\rho\dot{\mathbf{u}}))\big]
 +\divv\mathbb{G}_0(\rho\dot{\mathbf{u}})
 \notag\\&+\frac{\mu}{2\mu+\lambda}[\divv\mathbb{G}_0(\nabla P)-P].
\tag*{\qedhere}
\end{align*}
\end{proof}

This boundary-term-free decomposition leads directly to the elliptic estimate below.
\begin{lemma}\label{l2.7}
For any $1<q<\infty$, there exists a generic constant $\Lambda(q)>0$ depending only on $q$ such that
\begin{equation}\label{2.15}
\|\nabla\mathcal{Q}_1(\rho\dot{\mathbf{u}})\|_{L^q}+
 \|\nabla\mathcal{Q}_2(\rho\dot{\mathbf{u}})\|_{L^q}\leq \Lambda(q)\|\rho\dot{\mathbf{u}}\|_{L^q},
\end{equation}
\begin{equation}\label{2.16}
   \|\mathcal{Q}_3(P)\|_{L^q}\leq \Lambda(q)\|P\|_{L^q}.
\end{equation}
\end{lemma}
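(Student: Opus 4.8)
The plan is to estimate each of the three operators $\mathcal{Q}_1,\mathcal{Q}_2,\mathcal{Q}_3$ separately by recognizing them as compositions of the solution operator $\mathbb{G}_0$ (equivalently $G_0$) for the Dirichlet Laplacian on $\mathbb{R}^2_+$ with first- and second-order derivatives, and then invoking the Calder\'on--Zygmund theory. The starting point is the crucial structural observation already used in Lemma \ref{l2.6}: the boundary integral terms have been completely eliminated, so each $\mathcal{Q}_i$ acts on interior data only. Concretely, $\mathbb{G}_0$ maps a source $g$ to the solution $v$ of $\Delta v = g$ with $v|_{\partial\mathbb{R}^2_+}=0$, and by the method of images (the explicit kernel $\widetilde G(\mathbf x,\mathbf y)=\frac{1}{2\pi}(\log|\mathbf x-\mathbf y|-\log|\mathbf x-\mathbf y^*|)$ displayed in the proof of Lemma \ref{l2.6}), $\nabla^2 \mathbb{G}_0$ is a Calder\'on--Zygmund operator on $\mathbb{R}^2_+$: for $1<q<\infty$ one has $\|\nabla^2\mathbb{G}_0(g)\|_{L^q(\mathbb{R}^2_+)}\le C(q)\|g\|_{L^q(\mathbb{R}^2_+)}$. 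This is most cleanly seen by the odd-reflection argument already invoked before Lemma \ref{l2.3}: reflecting the source oddly across $\{x_2=0\}$ reduces the half-plane Dirichlet problem to the whole-plane Newtonian potential, for which the $L^q$ boundedness of second derivatives is classical. I would state this reflection/Calder\'on--Zygmund fact as the single analytic input and then treat the three terms as bookkeeping.

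For $\mathcal{Q}_1(\rho\dot{\mathbf u})=\divv\mathbb{G}_0(\rho\dot{\mathbf u})$, we have $\nabla\mathcal{Q}_1=\nabla\divv\mathbb{G}_0(\rho\dot{\mathbf u})$, which is $\nabla^2$ applied to $\mathbb{G}_0$ of the source $\rho\dot{\mathbf u}\in L^q$, so the bound $\|\nabla\mathcal{Q}_1\|_{L^q}\le C(q)\|\rho\dot{\mathbf u}\|_{L^q}$ is immediate. For $\mathcal{Q}_2$ there are four pieces; the two terms $\divv\mathbb{G}_0(\rho\dot{\mathbf u})$ and $-G_0(\divv(\rho\dot{\mathbf u}))$ are handled exactly as $\mathcal{Q}_1$ after taking one more gradient (note $G_0(\divv(\rho\dot{\mathbf u}))$ equals, componentwise, a divergence of $\mathbb{G}_0(\rho\dot{\mathbf u})$, so $\nabla$ of it is again $\nabla^2\mathbb{G}_0$ of an $L^q$ source). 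The genuinely new terms are $\divv\mathbb{G}_0(\mathbf x\divv(\rho\dot{\mathbf u}))$ and $\mathbf x\cdot\nabla G_0(\divv(\rho\dot{\mathbf u}))$, in which the multiplier $\mathbf x$ would, if left alone, destroy $L^q$-boundedness on the unbounded domain. The key is to combine the two so that the unbounded weight cancels: writing $\mathbf x\divv(\rho\dot{\mathbf u})=\divv(\mathbf x\otimes\rho\dot{\mathbf u})-\rho\dot{\mathbf u}$ (more precisely $x_i\partial_j(\rho\dot u_j)=\partial_j(x_i\rho\dot u_j)-\rho\dot u_i$) transfers the derivative off the source at the cost of an $L^q$ commutator term $\rho\dot{\mathbf u}$, and then the operator $\divv\mathbb{G}_0\divv$ with the extra $\nabla$ produces a third-order expression $\nabla\divv\mathbb{G}_0\divv(\mathbf x\otimes\rho\dot{\mathbf u})$; after integrating by parts once more against the kernel one is left with a Calder\'on--Zygmund operator acting on $\mathbf x\otimes\rho\dot{\mathbf u}$ together with lower-order pieces — but the $\mathbf x$ weight in the final kernel is tamed because the difference structure $\mathbf x_{\text{source}}-\mathbf x_{\text{eval}}$ appears, exactly as it does in the identity \eqref{2.14} and the cancellation carried out in the proof of Lemma \ref{l2.6}. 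In other words, the same homogeneity-of-degree $-1$ mechanism that removed the boundary terms also converts the weighted operator into a bona fide singular integral on $\rho\dot{\mathbf u}$; I would make this precise by writing the kernel of $\nabla\mathcal{Q}_2$ explicitly from the image representation and checking it is a standard Calder\'on--Zygmund kernel. Finally $\mathcal{Q}_3(P)=\divv\mathbb{G}_0(\nabla P)-P$: since $\divv\mathbb{G}_0(\nabla P)=\nabla^2\mathbb{G}_0\!:\!\text{(of the potential of }\nabla P)$ is the half-plane Riesz-type operator, it is $L^q$-bounded on $\nabla P$ after an integration by parts, i.e.\ $\divv\mathbb{G}_0(\nabla P)=\divv\divv\mathbb{G}_0(P\,\mathbb{I})$ up to a boundary term that vanishes by the Dirichlet structure, giving $\|\divv\mathbb{G}_0(\nabla P)\|_{L^q}\le C(q)\|P\|_{L^q}$, and adding the trivial $\|P\|_{L^q}$ yields \eqref{2.16}.

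Assembling, we take $\Lambda(q)$ to be the maximum of the finitely many Calder\'on--Zygmund constants produced above (all depending only on $q$), which gives \eqref{2.15} and \eqref{2.16} simultaneously. I expect the main obstacle to be the rigorous justification for $\mathcal{Q}_2$ that the weighted combination $\divv\mathbb{G}_0(\mathbf x\divv(\rho\dot{\mathbf u}))-\mathbf x\cdot\nabla G_0(\divv(\rho\dot{\mathbf u}))$ is $L^q\to L^q$ after one derivative: one must verify that the $\mathbf x$-multiplier genuinely disappears into a difference of spatial variables in the kernel (so that no growth at infinity survives), and one must be careful about the order in which divergences and the image reflection are applied, since $\mathbf x$ does not commute with $\divv$ or with the reflection $\mathbf y\mapsto\mathbf y^*$. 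All of these are the half-plane analogues of computations done on $\mathbb{R}^2$ for the Lions/Hoff effective-flux estimates, and the reflection trick reduces each to the classical whole-space statement; the bookkeeping is where care is needed, not the underlying harmonic analysis.
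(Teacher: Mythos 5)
Your high-level plan coincides with the paper's: reduce everything to Calder\'on--Zygmund bounds on the half-plane via the image Green kernel, observe that the boundary integrals have already been eliminated in the decomposition, and recognize that the only delicate piece is the $\mathbf{x}$-weighted combination inside $\mathcal{Q}_2$, where the unbounded weight must cancel into a difference $\mathbf{y}-\mathbf{x}$. You explicitly flag this as the step needing care and say you would write out the kernel from the image representation and check CZ — which is exactly what the paper does.

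Where the routes diverge is in how the cancellation is realized, and here your bookkeeping is a bit off. Your identity $\mathbf{x}\divv(\rho\dot{\mathbf u})=\divv(\mathbf{x}\otimes\rho\dot{\mathbf u})-\rho\dot{\mathbf u}$ only rewrites the first $\mathbf{x}$-weighted term; it neither touches the second one, $\mathbf{x}\cdot\nabla G_0(\divv(\rho\dot{\mathbf u}))$, nor produces a source that is $L^q$-bounded, since $\mathbf{x}\otimes\rho\dot{\mathbf u}$ still carries the weight. Claiming that one is "left with a Calder\'on--Zygmund operator acting on $\mathbf{x}\otimes\rho\dot{\mathbf u}$" whose weight is "tamed" afterwards puts the cart before the horse: the weighted object is not yet in $L^q$, so one cannot first extract a CZ operator and then cancel the weight. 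The paper's route avoids this issue entirely by combining the \emph{two} $\mathbf{x}$-weighted terms \emph{before} any estimate, writing both with the explicit kernel $\widetilde{G}$ so that the source-point weight $\mathbf{y}$ from $\divv\mathbb{G}_0(\mathbf{x}\divv(\cdot))$ and the evaluation-point weight $\mathbf{x}$ from $\mathbf{x}\cdot\nabla G_0(\divv(\cdot))$ merge into $(\mathbf{y}-\mathbf{x})\cdot\nabla_\mathbf{x}\widetilde{G}(\mathbf{x},\mathbf{y})$, as in \eqref{2.17}. One then has the closed-form kernel $\widehat{G}=\tfrac{1}{2\pi}\tfrac{(\mathbf{x}-\mathbf{y})\cdot(\mathbf{x}-\mathbf{y}^*)}{|\mathbf{x}-\mathbf{y}^*|^2}-\tfrac{1}{2\pi}$, which is degree-zero homogeneous and free of diagonal singularity (the logarithmic singularity is annihilated by the Euler-type operator $(\mathbf{y}-\mathbf{x})\cdot\nabla_\mathbf{x}$, and the mirror part keeps $\mathbf{y}^*$ away from $\mathbf{x}$), so the remaining derivative $\nabla_\mathbf{x}\nabla_\mathbf{y}\widehat{G}$ has degree $-2$ and falls within the standard singular-integral framework. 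So: same underlying mechanism and same analytic input, but you should reorganize the $\mathcal{Q}_2$ computation to combine the two weighted terms at the kernel level first rather than pulling a divergence through the source.
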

\begin{proof}
The proof follows from standard $L^p$ estimates, except for the following term in $\mathcal{Q}_2$:
\begin{align}\label{2.17}
\divv\mathbb{G}_0(\mathbf{x}\divv(\rho\dot{\mathbf{u}}))
 -\mathbf{x}\cdot\nabla G_0(\divv(\rho\dot{\mathbf{u}}))
 &=
 \divv\int\mathbb{G}(\mathbf{x},\mathbf{y})\cdot\mathbf{y}\divv_{\mathbf{y}}(\rho\dot{\mathbf{u}})\mathrm{d}\mathbf{y}
 -\mathbf{x}\cdot\nabla\int\widetilde{G}(\mathbf{x},\mathbf{y})\divv_{\mathbf{y}}(\rho\dot{\mathbf{u}})
 \mathrm{d}\mathbf{y}
  \notag\\&=
  \int(\mathbf{y}-\mathbf{x})\cdot\nabla_{\mathbf{x}}
  \widetilde{G}(\mathbf{x},\mathbf{y})\divv_{\mathbf{y}}(\rho\dot{\mathbf{u}})
 \mathrm{d}\mathbf{y}.
\end{align}

Note that the kernel
\begin{equation}\label{ke}
  \widehat{G}\triangleq(\mathbf{y}-\mathbf{x})\cdot\nabla_{\mathbf{x}}
  \widetilde{G}(\mathbf{x},\mathbf{y})=\frac{1}{2\pi}\frac{(\mathbf{x}-\mathbf{y})\cdot(\mathbf{x}-\mathbf{y}^*)}{|\mathbf{x}-\mathbf{y}^*|^2}-\frac{1}{2\pi},
 \ \ \  \mathbf{y}^*=(y_1,-y_2)\neq \mathbf{y},
\end{equation}
is homogeneous of degree zero under the scaling vector $\mathbf{x}-\mathbf{y}$,
and free of diagonal singularities as $\mathbf{x}\rightarrow\mathbf{y}$, since the reflected point $\mathbf{y}^*$ is separated from $\mathbf{x}$. This reflects a structural cancellation induced by the Dirichlet boundary condition, although the Green function itself possesses a logarithmic singularity.
Finally, the boundedness of mixed derivatives $\nabla_{\mathbf{x}}\nabla_{\mathbf{y}}\widehat{G}$ combined with the standard $L^p$-framework yields \eqref{2.15}.
\end{proof}

In addition, the bound of $\|\nabla\mathbf{u}\|_{L^q}$ is obtained via elliptic estimates for the constituent velocity fields, which arise from decomposing the source term of the linear Lam\'e system.
\begin{lemma}\label{l2.8}
Let $(\rho,\mathbf{u})$ be a smooth solution to the problem \eqref{a1}--\eqref{a4}.
For any $2<q<\infty$, there exists a constant $C>0$ depending only on $\mu$, $\lambda$, and $q$ such that
\begin{equation}\label{2.18}
  \|\nabla\mathbf{u}\|_{L^q}\leq C\|\sqrt{\rho}\|_{L^q}\|\sqrt{\rho}\dot{\mathbf{u}}\|_{L^2}+C\|P\|_{L^q}.
\end{equation}
\end{lemma}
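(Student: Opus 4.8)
The plan is to regard $\mathbf{u}$ purely as the solution of a linear Lam\'e system and to split it according to its two source terms, thereby circumventing the missing $L^q$ control of $\curl\mathbf{u}$. Abbreviating the Lam\'e operator by $\mathcal{L}\mathbf{v}\triangleq\mu\Delta\mathbf{v}+(\mu+\lambda)\nabla\divv\mathbf{v}$, the momentum equation $\eqref{a1}_2$ (together with mass conservation) reads $\mathcal{L}\mathbf{u}=\rho\dot{\mathbf{u}}+\nabla P$ in $\mathbb{R}^2_+$ with $\mathbf{u}|_{\partial\mathbb{R}^2_+}=\mathbf{0}$. First I would introduce $\mathbf{w}_1,\mathbf{w}_2$ as the decaying solutions of
\begin{equation*}
\mathcal{L}\mathbf{w}_1=\rho\dot{\mathbf{u}} \ \ \text{in} \ \mathbb{R}^2_+, \quad \mathbf{w}_1|_{\partial\mathbb{R}^2_+}=\mathbf{0}, \qquad\qquad \mathcal{L}\mathbf{w}_2=\nabla P \ \ \text{in} \ \mathbb{R}^2_+, \quad \mathbf{w}_2|_{\partial\mathbb{R}^2_+}=\mathbf{0}.
\end{equation*}
For the smooth solutions under consideration (Lemma \ref{l2.1}), the data $\rho\dot{\mathbf{u}}$ and $\nabla P$ lie in the Lebesgue spaces needed for these auxiliary problems to be uniquely solvable within classes decaying at infinity, so uniqueness for the homogeneous Lam\'e--Dirichlet problem on $\mathbb{R}^2_+$ forces $\mathbf{u}=\mathbf{w}_1+\mathbf{w}_2$; it then suffices to estimate $\nabla\mathbf{w}_1$ and $\nabla\mathbf{w}_2$ in $L^q$ separately.

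For $\mathbf{w}_1$ I would invoke the homogeneous $L^p$ elliptic regularity for $\mathcal{L}$ on the half-plane with homogeneous Dirichlet data, namely $\|\nabla^2\mathbf{w}_1\|_{L^p}\leq C(\mu,\lambda,p)\|\rho\dot{\mathbf{u}}\|_{L^p}$ for all $1<p<\infty$. Given $q\in(2,\infty)$, one sets $p=\tfrac{2q}{q+2}\in(1,2)$, so that $\tfrac1p-\tfrac12=\tfrac1q$; applying the critical Sobolev inequality $\|g\|_{L^q(\mathbb{R}^2_+)}\leq C\|\nabla g\|_{L^p(\mathbb{R}^2_+)}$ to $g=\nabla\mathbf{w}_1$ (which decays at infinity) gives $\|\nabla\mathbf{w}_1\|_{L^q}\leq C\|\nabla^2\mathbf{w}_1\|_{L^p}\leq C\|\rho\dot{\mathbf{u}}\|_{L^p}$. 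Writing $\rho\dot{\mathbf{u}}=\sqrt{\rho}\cdot\sqrt{\rho}\dot{\mathbf{u}}$ and using H\"older's inequality with exponents $q$ and $2$ (indeed $\tfrac1q+\tfrac12=\tfrac1p$) yields $\|\rho\dot{\mathbf{u}}\|_{L^p}\leq\|\sqrt{\rho}\|_{L^q}\|\sqrt{\rho}\dot{\mathbf{u}}\|_{L^2}$, hence $\|\nabla\mathbf{w}_1\|_{L^q}\leq C\|\sqrt{\rho}\|_{L^q}\|\sqrt{\rho}\dot{\mathbf{u}}\|_{L^2}$; in particular the case $q=6$, $p=3/2$ will be recorded for later use. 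An equivalent route is to represent $\mathbf{w}_1$ through the Green matrix of $\mathcal{L}$ on $\mathbb{R}^2_+$, whose $\mathbf{x}$-gradient is a Riesz-type kernel of order $-1$ in two dimensions, and to conclude with the Hardy--Littlewood--Sobolev inequality for the same exponent pair.

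For $\mathbf{w}_2$ the source is already in divergence form, $\nabla P=\divv(P\,\mathbb{I})$, so the $L^q$ estimate for $\mathcal{L}$ with Dirichlet data and divergence-form right-hand side gives directly $\|\nabla\mathbf{w}_2\|_{L^q}\leq C(\mu,\lambda,q)\|P\,\mathbb{I}\|_{L^q}=C\|P\|_{L^q}$ for every $1<q<\infty$. Adding the two bounds produces \eqref{2.18}.

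I expect the only genuinely delicate ingredient to be the half-plane $L^p$ theory for the \emph{system} $\mathcal{L}$ under Dirichlet conditions: unlike the scalar Laplacian handled by the odd extension in Subsection \ref{sec2.2}, the Lam\'e operator couples $u^1$ and $u^2$, so a component-wise reflection is not admissible, and one must instead appeal to the Agmon--Douglis--Nirenberg framework (the complementing condition holds for $\mathcal{L}$ with Dirichlet data) or to the explicit half-plane Green matrix of $\mathcal{L}$, whose first $\mathbf{x}$-derivatives form a Calder\'on--Zygmund kernel and whose second $\mathbf{x}$-derivatives are weakly singular of borderline order; these give both the $\nabla^2$-bound used for $\mathbf{w}_1$ and the divergence-form $\nabla$-bound used for $\mathbf{w}_2$. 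A secondary, more routine point is to pin down the precise decaying function classes in which $\mathbf{w}_1,\mathbf{w}_2$ are constructed so that the identity $\mathbf{u}=\mathbf{w}_1+\mathbf{w}_2$ is legitimate; this is settled by working with the smooth, decaying solutions furnished by the local theory of Lemma \ref{l2.1} and the uniqueness of the homogeneous Lam\'e--Dirichlet problem.
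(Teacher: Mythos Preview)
Your proposal is correct and follows essentially the same route as the paper: decompose $\mathbf{u}=\mathbf{w}_1+\mathbf{w}_2$ via the two Lam\'e--Dirichlet problems with sources $\rho\dot{\mathbf{u}}$ and $\nabla P$, then use $\|\nabla\mathbf{w}_1\|_{L^q}\leq C\|\nabla^2\mathbf{w}_1\|_{L^{2q/(q+2)}}\leq C\|\rho\dot{\mathbf{u}}\|_{L^{2q/(q+2)}}\leq C\|\sqrt{\rho}\|_{L^q}\|\sqrt{\rho}\dot{\mathbf{u}}\|_{L^2}$ and the divergence-form bound $\|\nabla\mathbf{w}_2\|_{L^q}\leq C\|P\|_{L^q}$. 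Your additional discussion of the ADN/Green-matrix justification and the decaying function classes is more detailed than what the paper records, but the argument is the same.
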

\begin{proof}
Let $\mathbf{w}_1$ and $\mathbf{w}_2$ satisfy
\begin{align}\label{2.19}
\begin{array}{ll}
\begin{cases}
\mu\Delta\mathbf{w}_1+(\mu+\lambda)\nabla\divv\mathbf{w}_1=\rho\dot{\mathbf{u}}, & \mathbf{x}\in\mathbb{R}^2_+,\\
\mathbf{w}_1=0,& \mathbf{x}\in\partial\mathbb{R}^2_+,
\end{cases}\ \ \ \ \
\begin{cases}
\mu\Delta\mathbf{w}_2+(\mu+\lambda)\nabla\divv\mathbf{w}_2=\nabla P, & \mathbf{x}\in\mathbb{R}^2_+,\\
\mathbf{w}_2=0,& \mathbf{x}\in\partial\mathbb{R}^2_+.
\end{cases}
\end{array}
\end{align}
Accordingly, we set $\mathbf{u}=\mathbf{w}_1+\mathbf{w}_2$. It thus follows from the elliptic theory for the Lam\'e system that
\begin{align}\label{2.20}
 \|\nabla\mathbf{u}\|_{L^q}&\leq \|\nabla\mathbf{w}_1\|_{L^q}+\|\nabla\mathbf{w}_2\|_{L^q}\notag\\
 &\leq C\|\nabla^2\mathbf{w}_1\|_{L^{\frac{2q}{2+q}}}+\|\nabla\mathbf{w}_2\|_{L^q}\notag\\
 &\leq C\|\rho\dot{\mathbf{u}}\|_{L^{\frac{2q}{2+q}}}+C\|P\|_{L^q}\\
 &\leq C\|\sqrt{\rho}\|_{L^q}\|\sqrt{\rho}\dot{\mathbf{u}}\|_{L^2}+C\|P\|_{L^q}.\tag*{\qedhere}
\end{align}
\end{proof}

\section{\textit{A priori} estimates}\label{sec3}

In this section we derive necessary {\it a priori} bounds for the strong solution to \eqref{a1}--\eqref{a4}, whose existence is guaranteed by Lemma \ref{l2.1}.
These bounds are independent of the lower bound of $\rho$, the initial regularity, and the time of existence. More precisely, fix $T>0$ and let $(\rho,\mathbf{u})$ be the corresponding strong solution on $\mathbb{R}^2_+\times(0,T]$.

First of all, if we multiply \eqref{a1}$_1$ by a cut-off function and use a standard limit procedure, then one can obtain that
\begin{align}\label{wz}
\|\rho(t)\|_{L^1}=\|\rho_0\|_{L^1},\ \ 0\leq t\leq T.
\end{align}
Next, for subsequent estimates, we define auxiliary functionals as
\begin{align}\label{3.1}
  &A_1(T)\triangleq \sup_{t\in[0,T]}\big(\sigma\|\nabla \mathbf{u}\|_{L^2}^2\big)+\int_0^T\sigma\|\sqrt{\rho} \dot{\mathbf{u}}\|_{L^2}^2\mathrm{d}t,\\
   &A_2(T)\triangleq \sup_{t\in[0,T]}\big(\sigma^3\|\sqrt{\rho} \dot{\mathbf{u}}\|_{L^2}^2\big)
   +\int_0^T\sigma^3\|\nabla \dot{\mathbf{u}}\|_{L^2}^2\mathrm{d}t,\label{3.2}\\
   &A_3(T)\triangleq \sup_{t\in[0,T]}\|\nabla \mathbf{u}\|_{L^2}^2
   +\int_0^{T}\|\sqrt{\rho}\dot{\mathbf{u}}\|_{L^2}^2\mathrm{d}t,
   \label{3.3}
\end{align}
where $\sigma=\sigma(t)\triangleq\min\{1,t\}$. Then we derive the following key {\it a priori} estimates on $(\rho, \mathbf{u})$.

\begin{proposition}\label{p3.1}
Under the conditions of Theorem $\ref{t1.1}$, there exist two positive constants $K\geq M^2$ and $\varepsilon$ depending only on $\alpha, \hat{\rho}, M, a, \gamma, \mu, \lambda$, and $\eta_0$ such that if $(\rho, \mathbf{u})$ is a strong solution to the problem \eqref{a1}--\eqref{a4} satisfying
\begin{align}\label{3.4}
\sup_{\mathbb{R}^2_+\times[0,T]}\|\rho\|_{L^{\theta}}\le2\hat{\rho},\ \
A_1(T)+A_2(T)\leq 2C_0^\frac{\alpha+2}{7\alpha+5}, \ \
A_3(\sigma(T))\leq 3K,
\end{align}
then the following improved bounds hold
\begin{align}\label{3.5}
\sup_{\mathbb{R}^2_+\times[0,T]}\|\rho\|_{L^{\theta}}\le\frac{7}{4}\hat{\rho},\ \
A_1(T)+A_2(T)\leq C_0^\frac{\alpha+2}{7\alpha+5},\ \
A_3(\sigma(T))\leq 2K,
\end{align}
provided $C_0\leq \varepsilon$.
\end{proposition}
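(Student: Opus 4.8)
I would treat the statement as the central bootstrap step of a continuity argument: granted the working bounds \eqref{3.4}, one must recover each of them with a strictly smaller constant, after which the local theory of Lemma \ref{l2.1} and a standard continuation upgrade everything to a global-in-time a priori estimate. The proof is organized as the usual hierarchy of energy estimates, but at every level the Helmholtz/vorticity machinery — unavailable under the no-slip condition \eqref{a4} — is replaced by the flux decomposition of Lemmas \ref{l2.6}--\ref{l2.7} and the velocity splitting of Lemma \ref{l2.8}.

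First I would record the basic energy balance: testing $\eqref{a1}_2$ by $\mathbf{u}$, integrating by parts using \eqref{a4}, and invoking $\eqref{a1}_1$ and \eqref{wz}, one gets $\sup_{[0,T]}\int(\tfrac12\rho|\mathbf{u}|^2+\tfrac1{\gamma-1}P)\,\mathrm{d}\mathbf{x}+\int_0^T\!\!\int(\mu|\nabla\mathbf{u}|^2+(\mu+\lambda)(\divv\mathbf{u})^2)\,\mathrm{d}\mathbf{x}\,\mathrm{d}t\le C_0$. Combined with \eqref{1.8}, the bootstrap hypothesis $\|\rho\|_{L^\theta}\le2\hat\rho$, and a propagated (time-growing) bound on $\|\bar{x}^\alpha\rho\|_{L^1}$, Lemma \ref{l2.4} with $\varrho=\rho$ controls $\|\rho\mathbf{u}\|_{L^4}$, and Lemma \ref{l2.8} then gives all the crude bounds on $\|\nabla\mathbf{u}\|_{L^q}$ ($2<q<\infty$) in terms of $\|\sqrt\rho\dot{\mathbf{u}}\|_{L^2}$ and powers of $C_0$; here $\dot{\mathbf{u}}$ vanishes on $\partial\mathbb{R}^2_+$ (since $\mathbf{u}$ does), so $\dot{\mathbf{u}}\in D^1_0$ and Lemma \ref{l2.4} applies to it as well. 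I would also derive the far-field lower bound \eqref{1.15} at this stage, by testing $\eqref{a1}_1$ against a cut-off adapted to the scale $\eta_1(1+t)\log^\alpha(e+t)$ and dominating the mass flux through the expanding boundary by the energy bound.

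The bulk of the work is the three functionals. For $A_1$ I would test \eqref{2.4} by $\sigma\dot{\mathbf{u}}$, arriving (after the usual manipulations and using $\eqref{a1}_1$ to write $P_t=-\divv(P\mathbf{u})-(\gamma-1)P\divv\mathbf{u}$) at a differential inequality for $\sigma\int(\tfrac\mu2|\nabla\mathbf{u}|^2+\tfrac{\mu+\lambda}2(\divv\mathbf{u})^2-P\divv\mathbf{u})$ whose right-hand side, after time integration, reduces to $\sigma'$-terms controlled by the energy bound and to $\int_0^T\!\sigma\!\int(P|\nabla\mathbf{u}|^2+\rho|\mathbf{u}||\nabla\mathbf{u}||\dot{\mathbf{u}}|)$. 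The term $\int_0^T\!\sigma\!\int P|\nabla\mathbf{u}|^2$ is the delicate one flagged in Subsection \ref{sec1.3}: I would split the time integral at $t=\sigma(T)$, on $(0,\sigma(T))$ absorb the initial velocity-gradient singularity via the $L^6$ bound $\|\nabla\mathbf{w}_1\|_{L^6}\lesssim\|\nabla^2\mathbf{w}_1\|_{L^{3/2}}\lesssim\|\sqrt\rho\|_{L^6}\|\sqrt\rho\dot{\mathbf{u}}\|_{L^2}$ of \eqref{2.20} together with $A_1,A_2$, and reduce the residual to $\int_0^T\!\sigma^3P^4\,\mathrm{d}\mathbf{x}\,\mathrm{d}t$, which is dominated using Lemma \ref{l2.8} and \eqref{2.15}--\eqref{2.16} once $\widetilde{F}$ is under control. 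The $A_2$ estimate comes from applying $\partial_t(\cdot)+\divv(\mathbf{u}\,\cdot)$ to $\eqref{a1}_2$ and testing by $\sigma^3\dot{\mathbf{u}}$, the commutator and pressure terms being controlled by products such as $\|\nabla\mathbf{u}\|_{L^4}\|\sqrt\rho\dot{\mathbf{u}}\|_{L^2}\|\nabla\dot{\mathbf{u}}\|_{L^2}$ routed again through Lemma \ref{l2.8}; the $A_3$ estimate is the unweighted analogue of the $A_1$ computation on $[0,\sigma(T)]$, using $\|\nabla\mathbf{u}_0\|_{L^2}\le M$. In each case the right-hand sides are at least quadratic in $A_1+A_2$ and bounded by powers of $C_0$, so the smallness \eqref{1.13} lets them be reabsorbed, yielding $A_1(T)+A_2(T)\le C_0^{(5\alpha+4)/(23\alpha+13)}$ and $A_3(\sigma(T))\le 2K$.

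The pivotal estimate — and the main obstacle — is the $L^\infty$ control of $\widetilde{F}$, on which the density bound rests. Using \eqref{2.9} and the second inequality of Lemma \ref{l2.2} with $q=60,r=4$ one has $\|\widetilde{F}\|_{L^\infty}\lesssim\|\widetilde{F}\|_{L^{60}}^{15/16}\|\nabla\widetilde{F}\|_{L^4}^{1/16}$; the gradient factor is $\lesssim\|\rho\dot{\mathbf{u}}\|_{L^4}$ by Lemma \ref{l2.7}, which Lemma \ref{l2.4} (with $\varrho=\rho$, $\mathbf{v}=\dot{\mathbf{u}}$) and $A_2$ bound by a polynomial in $t$, while the $L^{60}$ factor is bounded uniformly in $t$ by writing $\widetilde{F}=F-\tfrac{2\mu}{3\mu+\lambda}\mathcal{Q}_3(P)$, invoking Lemma \ref{l2.8}, \eqref{2.16} and interpolating $\|P\|_{L^{60}}=a\|\rho\|_{L^{60\gamma}}^\gamma$ between $\|\rho\|_{L^1}=1$ and $\|\rho\|_{L^\theta}\le2\hat\rho$ (one of the constraints fixing $\theta$), together with a spatial weighted estimate (cf. Lemma \ref{l3.5}). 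This gives $\|\widetilde{F}\|_{L^\infty}\lesssim(1+t)^{1/4}$ with a constant small thanks to \eqref{1.13}. For the density, \eqref{1.16} with $r=\theta$ reads $\frac{\mathrm{d}}{\mathrm{d}t}\int\rho^\theta+\frac{\theta-1}{2\mu+\lambda}\int\rho^\theta(F+P)\,\mathrm{d}\mathbf{x}=0$; the term $a\int\rho^{\theta+\gamma}$ is coercive and an $\{\rho\le K\}\cup\{\rho>K\}$ split using $\|\rho\|_{L^1}=1$ bounds it below by $c_0(\int\rho^\theta)^{1+\gamma/(\theta-1)}$, while in $\int\rho^\theta F$ the decomposition \eqref{2.9} lets the $\mathcal{Q}_3(P)$ contribution be partly absorbed into the dissipation once $\tfrac{2\mu}{3\mu+\lambda}$ and the constant $\Lambda(4)$ are small — precisely the role of \eqref{1.11} — and the $\widetilde{F}$ contribution be controlled by $\|\widetilde{F}\|_{L^\infty}\int\rho^\theta$. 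Setting $y=\int\rho^\theta$, this yields a Zlotnik inequality $y'\le g(y)+b'(t)$ with $g(\infty)=-\infty$ and $b(t_2)-b(t_1)\le N_0+N_1(t_2-t_1)$, $N_0$ small via \eqref{1.13}; Lemma \ref{lzlo} then gives $\sup_{[0,T]}\|\rho\|_{L^\theta}\le\tfrac74\hat\rho$ after choosing $\varepsilon$ small enough that $\max\{\hat\rho^\theta,\xi^*\}+N_0\le(\tfrac74\hat\rho)^\theta$. Finally, testing $\eqref{a1}_1$ against $\bar{x}^\alpha$ and estimating $\int\rho|\mathbf{u}||\nabla\bar{x}^\alpha|$ by Lemma \ref{l2.4} and the $A_i$ bounds propagates the weighted bound used above. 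The difficulty throughout is the absence of an $L^\infty$ density bound: wherever Hoff's framework would invoke it — the flux $L^\infty$ estimate, the control of $\int\sigma P|\nabla\mathbf{u}|^2$, the $A_2$ commutators, the choice of $\theta$ — one must instead work through the $L^q$-integrability of $P(\rho)$, the crude $\nabla\mathbf{u}$ bounds of Lemma \ref{l2.8}, and the Green-function decomposition, at the cost of stronger temporal singularities at $t=0$ that close only because $C_0$ is small.
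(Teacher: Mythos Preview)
Your architecture is correct and mirrors the paper's own proof, which is organized as Lemmas 3.1--3.6: energy identity, preliminary $A_1$/$A_2$ bounds via testing by $\sigma\dot{\mathbf{u}}$ and $\sigma^3\dot{\mathbf{u}}$, the $A_3$ and $\sigma\|\sqrt\rho\dot{\mathbf{u}}\|_{L^2}^2$ short-time bounds, the weighted density estimate giving \eqref{1.15} and \eqref{3.44}, and finally the $\|\rho\|_{L^\theta}$ bound via Zlotnik with the $L^\infty$ control of $\widetilde F$ through the $L^{60}$--$L^4$ Gagliardo--Nirenberg split. Your identification of the key mechanisms (the $L^6$ route for $\nabla\mathbf{w}_1$ on $(0,\sigma(T))$, the reduction to $\int_0^T\sigma^3\|P\|_{L^4}^4$, the role of \eqref{1.11} in absorbing $\mathcal Q_3(P)$ into the $\rho^{\theta+\gamma}$ dissipation) is accurate.

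There is, however, one concrete mis-statement that would break the $A_1$ estimate as written. After testing \eqref{3.11} by $\sigma\dot{\mathbf{u}}$, the cubic residual from the viscous terms is \emph{not} $\int\sigma\rho|\mathbf{u}||\nabla\mathbf{u}||\dot{\mathbf{u}}|$; integrating $-\mu\int\sigma\partial_i u^j\partial_i(u^k\partial_k u^j)$ by parts (using $\dot{\mathbf{u}}=0$ on $\partial\mathbb R^2_+$) produces terms of type $\int\sigma|\nabla\mathbf{u}|^3$, exactly as in \eqref{3.14}--\eqref{3.15}. This matters because your stated term cannot be closed in the present framework: splitting it as $\varepsilon\int\sigma\rho|\dot{\mathbf{u}}|^2+C\int\sigma\rho|\mathbf{u}|^2|\nabla\mathbf{u}|^2$ leaves $\|\sqrt\rho\,\mathbf{u}\|_{L^4}$, whose control via Gagliardo--Nirenberg would require $\nabla(\sqrt\rho\,\mathbf{u})\in L^2$ or an $L^\infty$ bound on $\rho$, neither of which is available here. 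The paper handles the genuine residual $\int_0^T\sigma\|\nabla\mathbf{u}\|_{L^3}^3$ by interpolating $\|\nabla\mathbf{u}\|_{L^3}^3\le\|\nabla\mathbf{u}\|_{L^2}^{3/2}\|\nabla\mathbf{u}\|_{L^6}^{3/2}$ and invoking Lemma \ref{l2.8}; once you replace your term accordingly, the rest of your sketch goes through unchanged.
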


Before proving Proposition \ref{p3.1}, we establish some necessary \textit{a priori} estimates, see Lemmas \ref{l3.1}--\ref{l3.6} below. We begin with the basic energy estimate of $(\rho, \mathbf{u})$.

\begin{lemma}\label{l3.1}
It holds that
\begin{align}\label{3.6}
\sup_{0\leq t\leq T}\int\bigg(\frac{1}{2}\rho |\mathbf{u}|^2+\frac{P}{\gamma-1}\bigg)\mathrm{d}\mathbf{x}+\int_0^T\big[\mu\|\nabla \mathbf{u}\|_{L^2}^2+(\mu+\lambda)\|\divv \mathbf{u}\|_{L^2}^2\big]\mathrm{d}t
\leq C_0.
\end{align}
\end{lemma}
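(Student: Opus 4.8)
The plan is to derive \eqref{3.6} as the standard energy identity for \eqref{a1}--\eqref{a4}, obtained by testing the momentum equation with $\mathbf{u}$ itself and using the mass equation to handle the pressure term. Since we are working with the strong solution from Lemma \ref{l2.1}, all manipulations are rigorous on $\mathbb{R}^2_+\times(0,T]$, and the only care needed is with integrability/decay at spatial infinity, which is controlled by the weight $\bar x^\alpha\rho\in L^\infty(0,T;L^1)$ together with $\nabla\mathbf{u}\in L^2$; a cut-off function $\varphi_R$ supported in $\widetilde B_{2R}$, equal to $1$ on $\widetilde B_R$, lets us integrate by parts and then pass $R\to\infty$, the boundary terms on $x_2=0$ vanishing because $\mathbf{u}|_{\partial\mathbb{R}^2_+}=0$.

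First I would multiply $\eqref{a1}_2$ by $\mathbf{u}$ and integrate over $\mathbb{R}^2_+$. The inertial terms $(\rho\mathbf{u})_t\cdot\mathbf{u}+\divv(\rho\mathbf{u}\otimes\mathbf{u})\cdot\mathbf{u}$ combine, via $\eqref{a1}_1$, into $\frac{\mathrm{d}}{\mathrm{d}t}\int\frac12\rho|\mathbf{u}|^2\,\mathrm{d}\mathbf{x}$ (this is the usual Reynolds-transport computation: $\partial_t(\tfrac12\rho|\mathbf{u}|^2)+\divv(\tfrac12\rho|\mathbf{u}|^2\mathbf{u})=\rho\dot{\mathbf{u}}\cdot\mathbf{u}$). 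The viscous terms, after integrating by parts and using $\mathbf{u}=0$ on the boundary, give $-\mu\|\nabla\mathbf{u}\|_{L^2}^2-(\mu+\lambda)\|\divv\mathbf{u}\|_{L^2}^2$. For the pressure term, $\int\nabla P\cdot\mathbf{u}\,\mathrm{d}\mathbf{x}=-\int P\,\divv\mathbf{u}\,\mathrm{d}\mathbf{x}$; then I use that $P=a\rho^\gamma$ and the mass equation in the form \eqref{1.16} (with $r=\gamma$) — or equivalently $\partial_t\big(\tfrac{P}{\gamma-1}\big)+\divv\big(\tfrac{P}{\gamma-1}\mathbf{u}\big)+P\,\divv\mathbf{u}=0$ — to rewrite $-\int P\,\divv\mathbf{u}\,\mathrm{d}\mathbf{x}=\frac{\mathrm{d}}{\mathrm{d}t}\int\tfrac{P}{\gamma-1}\,\mathrm{d}\mathbf{x}$. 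Collecting these yields
\begin{equation*}
\frac{\mathrm{d}}{\mathrm{d}t}\int\Big(\tfrac12\rho|\mathbf{u}|^2+\tfrac{P}{\gamma-1}\Big)\mathrm{d}\mathbf{x}+\mu\|\nabla\mathbf{u}\|_{L^2}^2+(\mu+\lambda)\|\divv\mathbf{u}\|_{L^2}^2=0,
\end{equation*}
and integrating in $t$ from $0$ to $T$, together with the definition \eqref{1.5} of $C_0$, gives \eqref{3.6}.

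The only genuine obstacle is the justification of the limit passage $R\to\infty$ in the cut-off argument, i.e.\ showing that the error terms produced by $\nabla\varphi_R$ — of the schematic form $\int (\tfrac12\rho|\mathbf{u}|^2+P)\,\mathbf{u}\cdot\nabla\varphi_R\,\mathrm{d}\mathbf{x}$ and $\int(\mu\nabla\mathbf{u}+(\mu+\lambda)\divv\mathbf{u}\,\mathbb{I})\cdot\nabla\varphi_R\cdot\mathbf{u}\,\mathrm{d}\mathbf{x}$ — vanish as $R\to\infty$. For this one uses $|\nabla\varphi_R|\le CR^{-1}\mathbf{1}_{\{R\le|\mathbf{x}|\le 2R\}}$, Hölder's inequality, the decay encoded in $\bar x^\alpha\rho\in L^1$ (controlling $\int_{|\mathbf{x}|\ge R}\rho\,\mathrm{d}\mathbf{x}\le CR^{-\alpha}$ up to log factors), the weighted embedding of Lemma \ref{l2.3} (to bound the Lebesgue norms of $\mathbf{u}$ over the annulus in terms of $\|\nabla\mathbf{u}\|_{L^2}$ and $\|\sqrt\rho\mathbf{u}\|_{L^2}$), and the regularity $\rho\in C([0,T];H^2)$, $\mathbf{u}\in C([0,T];D^2\cap D^1)$ from Lemma \ref{l2.1}; the $x_2=0$ boundary contributions are identically zero by \eqref{a4}. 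This is routine but is the one place where the specific function-space hypotheses \eqref{1.12} are actually invoked.
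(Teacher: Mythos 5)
Your proposal is correct and is essentially the same argument as the paper's: derive the pressure identity \eqref{3.7} from the mass equation, test the momentum equation with $\mathbf{u}$, integrate by parts using the no-slip and far-field conditions, and combine to obtain the exact energy identity, then integrate in time. The paper simply states the result after integration by parts without spelling out the cut-off/decay argument you detail, but that justification is implicit and standard given the regularity from Lemma \ref{l2.1} and the weight $\bar{x}^\alpha\rho\in L^1$.
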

\begin{proof}
From the mass equation $\eqref{a1}_1$, one has that
\begin{equation}\label{3.7}
\frac{P_t}{\gamma-1}+\frac{\divv(P\mathbf{u})}{\gamma-1}+P\divv \mathbf{u}=0.
\end{equation}
Multiplying $\eqref{a1}_2$ by $\mathbf{u}$ and integrating by parts, and then adding the resultant to \eqref{3.7}, we infer from \eqref{a3} and \eqref{a4} that
\begin{align*}
\frac{\mathrm{d}}{\mathrm{d}t}\int\bigg(\frac{1}{2}\rho |\mathbf{u}|^2+\frac{P}{\gamma-1}\bigg)\mathrm{d}\mathbf{x}+\int\big[\mu|\nabla\mathbf{u}|^2
+(\mu+\lambda)(\divv\mathbf{u})^2\big]\mathrm{d}\mathbf{x}
=0,
\end{align*}
which yields \eqref{3.6} after integrating the above equality with respect to $t$ over $(0,T)$.
\end{proof}

The next result concerns the preliminary bounds of $A_1(T)$ and $A_2(T)$.
\begin{lemma}\label{l3.2}
Let $(\rho,\mathbf{u})$ be a smooth solution of \eqref{a1}--\eqref{a4} on $\mathbb{R}^2_+\times(0,T]$ satisfying $\|\rho\|_{L^{\theta}}\le2\hat{\rho}$. Then there exist positive constants $C$ and $\varepsilon_1$ depending only on $\alpha, \hat{\rho}, a, \gamma, \mu$, and $\lambda$ such that
\begin{gather}\label{3.8}
A_1(T)\leq  CC_0^\frac{2(\alpha+2)}{7\alpha+5}+C\int_0^T \sigma\|\nabla\mathbf{u}\|_{L^3}^3\mathrm{d}t
 +C\int_{0}^T\sigma^3\|P\|_{L^4}^4\mathrm{d}t,
\\
A_2(T)\leq CA_1(T)+C
 \Big(A_1(T)+A_1^\frac32(T)\Big)A_2^\frac12(T)+C\int_0^T\sigma^3\|P\|_{L^4}^4\mathrm{d}t \label{3.9}
\end{gather}
provided $C_0\leq \varepsilon_1$.
\end{lemma}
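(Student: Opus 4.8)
\textbf{Proof plan for Lemma \ref{l3.2}.}

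The plan is to obtain \eqref{3.8} and \eqref{3.9} by the usual Hoff-type energy identities for the material derivative $\dot{\mathbf u}$, but carried out with the weight $\sigma$ (resp.\ $\sigma^3$) and with every term involving the pressure or the velocity gradient estimated through the tools assembled in Section \ref{sec2}, in particular the decomposition $F=\widetilde F+\frac{2\mu}{3\mu+\lambda}\mathcal Q_3$ of Lemma \ref{l2.6}, the elliptic bounds of Lemma \ref{l2.7}, and the Lam\'e estimate of Lemma \ref{l2.8}. First I would derive the standard first-order identity: multiply the momentum equation $\eqref{a1}_2$, rewritten as $\rho\dot{\mathbf u}=\nabla F-\mu\nabla^\bot\curl\mathbf u$ (see \eqref{2.4}), by $\sigma\dot{\mathbf u}$ and integrate over $\mathbb{R}^2_+$. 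Using $\eqref{a1}_1$ to commute $\partial_t$ past $\rho$ and integration by parts in space (no boundary terms appear since $\mathbf u=\dot{\mathbf u}=\mathbf 0$ on $\partial\mathbb{R}^2_+$ by \eqref{a4} and the no-slip condition), one gets
\begin{equation*}
\frac{\mathrm d}{\mathrm dt}\int\sigma\Big(\tfrac{2\mu+\lambda}{2}(\divv\mathbf u)^2+\tfrac{\mu}{2}|\curl\mathbf u|^2-P\divv\mathbf u\Big)\mathrm d\mathbf x+\int\sigma\rho|\dot{\mathbf u}|^2\mathrm d\mathbf x\le C\sigma'\|\nabla\mathbf u\|_{L^2}^2+C\int\sigma\big(P_t\divv\mathbf u+P|\nabla\mathbf u|^2+|\nabla\mathbf u|^3\big)\mathrm d\mathbf x,
\end{equation*}
where the cubic term $\int\sigma|\nabla\mathbf u|^3$ comes from the convective contribution $\int\sigma\rho u^j\partial_ju^i\dot u^i$ after integrating by parts, and the $P_t\divv\mathbf u$ term is handled via \eqref{3.7}, converting it into $\int\sigma P\divv\mathbf u\,\divv\mathbf u$ plus a divergence; the time-boundary term is absorbed by \eqref{3.6} since $\sigma'\le1$ and $\sigma'=0$ for $t\ge1$, yielding the $C_0^{\,\cdot}$ contribution on the right of \eqref{3.8} after using \eqref{3.6} and the hypothesis $\int_0^T\|\nabla\mathbf u\|_{L^2}^2\le C_0$. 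The term $\int\sigma P|\nabla\mathbf u|^2$ is split in time as anticipated in Subsection \ref{sec1.3}: on $[0,\sigma(T)]$ one uses an $L^6$-estimate for $\nabla\mathbf u$ via \eqref{2.20} to keep the initial singularity integrable, and on the remaining interval it is bounded by $C\int\sigma^3P^4+C\int\sigma^{-1}|\nabla\mathbf u|^{4/3}$ and the latter is controlled by \eqref{3.6}; this produces the $\int\sigma^3\|P\|_{L^4}^4$ term in \eqref{3.8}. This establishes \eqref{3.8}.

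For \eqref{3.9} the scheme is the classical second-order estimate: apply the operator $\dot{(\;)}=\partial_t+\mathbf u\cdot\nabla$ to the momentum equation, multiply by $\sigma^3\dot{\mathbf u}$, and integrate. The left side yields $\frac{\mathrm d}{\mathrm dt}\big(\sigma^3\int\rho|\dot{\mathbf u}|^2\big)$ plus $\mu\int\sigma^3|\nabla\dot{\mathbf u}|^2$ (after controlling the $\divv$-part similarly and noting $\dot{\mathbf u}$ vanishes on the boundary), minus a time-boundary term $3\sigma^2\sigma'\int\rho|\dot{\mathbf u}|^2$ which is controlled by $CA_1(T)$ thanks to $\sigma^2\sigma'\le\sigma\sigma'\le\sigma$ and the definition \eqref{3.1}. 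On the right side one gets the standard commutator terms: $\int\sigma^3(\divv\mathbf u)(\partial_kF)\dot u^k$-type terms, $\int\sigma^3|\nabla\mathbf u|^2|\nabla\dot{\mathbf u}|$, $\int\sigma^3|\nabla\mathbf u|^4$, and pressure terms $\int\sigma^3 P_t\divv\dot{\mathbf u}\sim\int\sigma^3(P|\nabla\mathbf u|)|\nabla\dot{\mathbf u}|$. Each cubic/quartic term in $\nabla\mathbf u$ is estimated by Gagliardo--Nirenberg (Lemma \ref{l2.2}), Young's inequality, and Lemma \ref{l2.8} (with $F$ replaced by $\widetilde F+\mathcal Q_3(P)$ through Lemma \ref{l2.6}, and $\|\nabla\widetilde F\|\lesssim\|\rho\dot{\mathbf u}\|$, $\|\mathcal Q_3(P)\|\lesssim\|P\|$ from Lemma \ref{l2.7}), so that the highest-order factor $\|\nabla\dot{\mathbf u}\|_{L^2}$ is absorbed into the left side with a small coefficient; the residual products of lower-order quantities, after using $\|\nabla\mathbf u\|_{L^4}^2\lesssim\|\nabla\mathbf u\|_{L^2}\|\nabla\dot{\mathbf u}\|_{L^2}+\|P\|_{L^4}^2$-type bounds and the definitions \eqref{3.1}--\eqref{3.2}, collapse exactly into $C(A_1+A_1^{3/2})A_2^{1/2}$ by Young's inequality, together with the lower-order $CA_1(T)$ and the pressure remainder $\int\sigma^3\|P\|_{L^4}^4$.

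The main obstacle will be controlling the pressure-related contributions without an $L^\infty$ density bound: every place where the classical argument would estimate $P$ (or $P_t$) in $L^\infty$ or $L^2$ now needs the refined $L^4$ (and sometimes $L^6$) integrability, and the $\sigma$-weights must be chosen so that the resulting time integrals $\int\sigma^3\|P\|_{L^4}^4$ genuinely close — this is precisely why the cubic gradient term and the separate $L^6$-treatment near $t=0$ appear, and keeping these singularities integrable while never creating a term that cannot be absorbed into $A_1,A_2$ is the delicate bookkeeping at the heart of the lemma. Everything else is the standard Hoff machinery adapted to the half-plane via the boundary-term-free decomposition of Subsection \ref{sec2.2}.
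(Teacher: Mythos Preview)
Your overall architecture is right---multiply by $\sigma\dot{\mathbf u}$ and $\sigma^3\dot{\mathbf u}$, integrate by parts, split $[0,T]=[0,\sigma(T)]\cup[\sigma(T),T]$---and it matches the paper's. But two of the steps you wrote would not close, and in both cases the paper does something specifically different.

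\textbf{(i) The large-time piece of $\int\sigma P|\nabla\mathbf u|^2$.} Your proposed Young splitting ``$C\int\sigma^3P^4+C\int\sigma^{-1}|\nabla\mathbf u|^{4/3}$'' is arithmetically off (Young with exponents $4,4/3$ gives $|\nabla\mathbf u|^{8/3}$), and even corrected it fails: $\int_{\mathbb{R}^2_+}|\nabla\mathbf u|^{8/3}$ is not controlled by the $L^2$ energy \eqref{3.6} in two dimensions. The paper instead uses H\"older $\|P\|_{L^4}\|\nabla\mathbf u\|_{L^2}\|\nabla\mathbf u\|_{L^4}$ and then Lemma \ref{l2.8} to expand $\|\nabla\mathbf u\|_{L^4}\lesssim\|\sqrt\rho\|_{L^4}\|\sqrt\rho\dot{\mathbf u}\|_{L^2}+\|P\|_{L^4}$; this produces a term of the form $C(\hat\rho)C_0^{\beta}A_1(T)$ (similarly on the short interval via the $L^6$ route), and \emph{this} is what must be absorbed into the left side---hence the smallness $\varepsilon_1$. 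Your plan never generates such a term and so never explains why the hypothesis $C_0\le\varepsilon_1$ is needed.

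\textbf{(ii) The quartic gradient term in $A_2$.} The ``$\|\nabla\mathbf u\|_{L^4}^2\lesssim\|\nabla\mathbf u\|_{L^2}\|\nabla\dot{\mathbf u}\|_{L^2}+\|P\|_{L^4}^2$''-type bound you invoke is not available and would be dangerous: any appearance of $\|\nabla\dot{\mathbf u}\|_{L^2}$ on the right with a coefficient you cannot make small would prevent absorption into the dissipation. The paper avoids this entirely: from the Lam\'e decomposition $\mathbf u=\mathbf w_1+\mathbf w_2$ it uses $\|\nabla\mathbf w_1\|_{L^6}\lesssim\|\nabla^2\mathbf w_1\|_{L^{3/2}}\lesssim\|\rho\dot{\mathbf u}\|_{L^{3/2}}\lesssim\|\sqrt\rho\|_{L^6}\|\sqrt\rho\dot{\mathbf u}\|_{L^2}$, giving $\|\nabla\mathbf u\|_{L^4}^4\lesssim(\|\nabla\mathbf u\|_{L^2}+1)\|\sqrt\rho\dot{\mathbf u}\|_{L^2}^3+\|P\|_{L^4}^4$; then $\int_0^T\sigma^3\|\sqrt\rho\dot{\mathbf u}\|_{L^2}^3(\|\nabla\mathbf u\|_{L^2}+1)$ is split as $(\sigma^{3/2}\|\sqrt\rho\dot{\mathbf u}\|_{L^2})\cdot(\sigma\|\sqrt\rho\dot{\mathbf u}\|_{L^2}^2)\cdot(\sigma^{1/2}\|\nabla\mathbf u\|_{L^2}+1)$, which is exactly $A_2^{1/2}(A_1+A_1^{3/2})$. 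Note also that Lemmas \ref{l2.6}--\ref{l2.7} (the $\widetilde F$/$\mathcal Q_3$ decomposition) are \emph{not} used anywhere in this lemma's proof; only Lemma \ref{l2.8} is. The $F$-decomposition enters later, in Lemma \ref{l3.3}, to estimate $\int\sigma^3\|P\|_{L^4}^4$.
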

\begin{proof}
Using \eqref{3.6} and $P=a\rho^\gamma$, we obtain the following bound of $\|P\|_{L^q}$ with $q\in[1,20]$:
\begin{equation}\label{3.10}
 \|P\|_{L^q}=a\|\rho\|_{L^{q\gamma}}^\gamma\leq  a\|\rho\|_{L^\gamma}^\frac{4\gamma(2\alpha+1)-q\gamma(\alpha-1)}{q(7\alpha+5)}
 \|\rho\|_{L^{\theta}}^\frac{4\gamma(q-1)(2\alpha+1)}{q(7\alpha+5)}
 \leq C(a,\gamma,\hat{\rho})C_0^\frac{4(2\alpha+1)-q(\alpha-1)}{q(7\alpha+5)}, \ \ 0\leq t\leq T.
\end{equation}
From $\eqref{a1}_1$ and \eqref{1.6}, the momentum equation $\eqref{a1}_2$ reads as
\begin{equation}\label{3.11}
\rho\dot{\mathbf{u}}+\nabla P=\mu\Delta\mathbf{u}+(\mu+\lambda)\nabla\divv \mathbf{u}.
\end{equation}

\textbf{Estimate for $A_1(T)$}.
Multiplying \eqref{3.11} by $\sigma\dot{\mathbf{u}}$ and integrating the resultant over $\mathbb{R}^2_+$, we get that
\begin{align}\label{3.12}
\int\sigma\rho |\dot{\mathbf{u}}|^2\mathrm{d}\mathbf{x}=
\int\sigma[-\dot{\mathbf{u}}\cdot\nabla P+\mu\Delta\mathbf{u}\cdot\dot{\mathbf{u}}
+(\mu+\lambda)\dot{\mathbf{u}}\cdot\nabla\divv \mathbf{u}]\mathrm{d}\mathbf{x}.
\end{align}
For the first term on the right-hand side of \eqref{3.12}, it follows from $\eqref{a1}_1$ and $P=a\rho^\gamma$ that
\begin{align}\label{3.13}
-\int\sigma\dot{\mathbf{u}}\cdot\nabla P\mathrm{d}\mathbf{x}
&=\int \sigma P\divv\mathbf{u}_t\mathrm{d}\mathbf{x}
-\int\sigma\mathbf{u}\cdot\nabla\mathbf{u}\cdot\nabla P\mathrm{d}\mathbf{x}
\notag\\
&=\frac{\mathrm{d}}{\mathrm{d}t}\int\sigma P\divv\mathbf{u} \mathrm{d}\mathbf{x}
-\int\sigma\divv\mathbf{u}P'(\rho)\rho_t\mathrm{d}\mathbf{x}
-\int\sigma' P\divv\mathbf{u} \mathrm{d}\mathbf{x}
-\int\sigma\mathbf{u}\cdot\nabla\mathbf{u}\cdot\nabla P\mathrm{d}\mathbf{x}\notag\\
&=\frac{\mathrm{d}}{\mathrm{d}t}\int \sigma P\divv\mathbf{u}\mathrm{d}\mathbf{x}+
\int\sigma(\divv\mathbf{u})^2P'(\rho)\rho\mathrm{d}\mathbf{x}+
\int\sigma\mathbf{u}\cdot\nabla P\divv\mathbf{u}\mathrm{d}\mathbf{x}
-\int\sigma' P\divv\mathbf{u} \mathrm{d}\mathbf{x}\notag\\
&\quad
-\int\sigma\mathbf{u}\cdot\nabla\mathbf{u}\cdot\nabla P\mathrm{d}\mathbf{x}\notag\\
&=\frac{\mathrm{d}}{\mathrm{d}t}\int\sigma P\divv\mathbf{u}\mathrm{d}\mathbf{x}+
\int\sigma(\divv\mathbf{u})^2(P'(\rho)\rho-P)\mathrm{d}\mathbf{x}
+\int\sigma P\partial_iu^j\partial_ju^i\mathrm{d}\mathbf{x}
-\int\sigma' P\divv\mathbf{u} \mathrm{d}\mathbf{x}\notag\\
&\leq\frac{\mathrm{d}}{\mathrm{d}t}\int\sigma P\divv\mathbf{u}\mathrm{d}\mathbf{x}+
C\int\sigma P|\nabla\mathbf{u}|^2\mathrm{d}\mathbf{x}
+C\int\sigma' P|\nabla\mathbf{u}|\mathrm{d}\mathbf{x}.
\end{align}
Integrating the remaining terms in \eqref{3.12} by parts, we obtain that
\begin{align}\label{3.14}
&\mu\int\sigma\Delta\mathbf{u}\cdot\dot{\mathbf{u}}\mathrm{d}\mathbf{x}
+(\mu+\lambda)\int\sigma\dot{\mathbf{u}}\cdot\nabla\divv \mathbf{u}\mathrm{d}\mathbf{x}\notag\\
&=
-\frac{\mu}{2}\frac{\mathrm{d}}{\mathrm{d}t}\big(\sigma\|\nabla\mathbf{u}\|^2_{L^2}\big)
+\frac{\mu}{2}\sigma'\|\nabla\mathbf{u}\|^2_{L^2}
-\mu\int\sigma\partial_iu^j\partial_i(u^k\partial_ku^j)\mathrm{d}\mathbf{x}
-\frac{\mu+\lambda}{2}\frac{\mathrm{d}}{\mathrm{d}t}\big(\sigma\|\divv\mathbf{u}\|^2_{L^2}\big)\notag\\
&\quad+\frac{\mu+\lambda}{2}\sigma'\|\divv\mathbf{u}\|^2_{L^2}
-(\mu+\lambda)\int\sigma\divv \mathbf{u}\divv (\mathbf{u}\cdot\nabla\mathbf{u})\mathrm{d}\mathbf{x}\notag\\
&\leq -\frac{1}{2}\frac{\mathrm{d}}{\mathrm{d}t}\big[\mu\sigma\|\nabla\mathbf{u}\|^2_{L^2}+(\mu+\lambda)\sigma\|\divv\mathbf{u}\|^2_{L^2}\big]
+C\|\nabla\mathbf{u}\|^2_{L^2}+C\int \sigma|\nabla\mathbf{u}|^3\mathrm{d}\mathbf{x},
\end{align}
owing to $0\leq \sigma,\sigma'\leq 1$.
As a result, substituting \eqref{3.13} and \eqref{3.14} into \eqref{3.12} leads to
\begin{align}\label{3.15}
\frac{\mathrm{d}}{\mathrm{d}t}\big(\sigma B(t)\big)+\int\sigma\rho |\dot{\mathbf{u}}|^2\mathrm{d}\mathbf{x}
&\leq C\|\nabla\mathbf{u}\|^2_{L^2}+C\int \sigma|\nabla\mathbf{u}|^3\mathrm{d}\mathbf{x}
+C\int\sigma' P|\nabla\mathbf{u}|\mathrm{d}\mathbf{x}
+C\int\sigma P|\nabla\mathbf{u}|^2\mathrm{d}\mathbf{x},
\end{align}
where
\begin{align}\label{3.16}
  B(t)&\triangleq \frac{\mu}{2}\|\nabla\mathbf{u}\|^2_{L^2}+\frac{\mu+\lambda}{2}\|\divv\mathbf{u}\|^2_{L^2}
  -\int P\divv\mathbf{u}\mathrm{d}\mathbf{x}\notag\\
  &\geq\frac{\mu}{2}\|\nabla\mathbf{u}\|^2_{L^2}+\frac{\mu+\lambda}{2}\|\divv\mathbf{u}\|^2_{L^2}
  -\|P\|_{L^2}\|\divv\mathbf{u}\|_{L^2}\notag\\
  &\geq\frac{\mu}{2}\|\nabla\mathbf{u}\|^2_{L^2}+\frac{\mu+\lambda}{4}\|\divv\mathbf{u}\|^2_{L^2}
  -C(\hat{\rho})C_0^\frac{6(\alpha+1)}{7\alpha+5},
\end{align}
due to \eqref{3.10}.
Integrating \eqref{3.15} with respect to $t$ over $(0,T)$, one infers from \eqref{3.6} that
\begin{align}\label{3.17}
 &\sup_{t\in[0,T]}\big(\sigma B(t)\big)+\int_0^T\int\sigma\rho |\dot{\mathbf{u}}|^2\mathrm{d}\mathbf{x}\mathrm{d}t
 \notag\\&\leq C(\hat{\rho})C_0^\frac{6(\alpha+1)}{7\alpha+5}+
C\int_0^T\int \sigma|\nabla\mathbf{u}|^3\mathrm{d}\mathbf{x}\mathrm{d}t
+C\int_0^T\int\sigma' P|\nabla\mathbf{u}|\mathrm{d}\mathbf{x}\mathrm{d}t
 +C\int_0^T\int\sigma P|\nabla\mathbf{u}|^2\mathrm{d}\mathbf{x}\mathrm{d}t\notag\\
 &\leq C(\hat{\rho})C_0^\frac{3(\alpha+1)}{7\alpha+5}+C\int_0^T\int \sigma|\nabla\mathbf{u}|^3\mathrm{d}\mathbf{x}\mathrm{d}t
 +C\int_0^T\int\sigma P|\nabla\mathbf{u}|^2\mathrm{d}\mathbf{x}\mathrm{d}t
\end{align}
provided $C_0\leq 1$, where in the last inequality we have used the following estimate
\begin{equation*}
  \int_0^T\int\sigma' P|\nabla\mathbf{u}|\mathrm{d}\mathbf{x}\mathrm{d}t\leq
   \int_0^{\sigma(T)}\|P\|_{L^2}\|\nabla\mathbf{u}\|_{L^2}\mathrm{d}t
   \leq C(\hat{\rho})C_0^\frac{3(\alpha+1)}{7\alpha+5}\int_0^{\sigma(T)}\big(\|\nabla\mathbf{u}\|_{L^2}^2\big)^\frac12\mathrm{d}t
   \leq C(\hat{\rho})C_0^\frac{3(\alpha+1)}{7\alpha+5}.
\end{equation*}

To handle the last term in \eqref{3.17}, we decompose the time axis into two parts: a short-time interval and a long one. If $C_0\leq1$, it follows from \eqref{3.10}, Lemma \ref{l2.8}, and H\"older's inequality that
\begin{align}\label{3.18}
  \int_0^{\sigma(T)}\int\sigma P|\nabla\mathbf{u}|^2\mathrm{d}\mathbf{x}\mathrm{d}t
  &\leq  \int_0^{\sigma(T)}\sigma\|P\|_{L^3}\|\nabla\mathbf{u}\|_{L^2}\|\nabla\mathbf{u}\|_{L^6}\mathrm{d}t\notag\\
  &\leq C(\hat{\rho})C_0^\frac{5\alpha+7}{3(7\alpha+5)}
  \int_0^{\sigma(T)}\sigma \|\nabla\mathbf{u}\|_{L^2}\big(\|\sqrt{\rho}\|_{L^6}\|\sqrt{\rho}\dot{\mathbf{u}}\|_{L^2}+\|P\|_{L^6}\big)\mathrm{d}t\notag\\
  &\leq C(\hat{\rho})C_0^\frac{5\alpha+7}{3(7\alpha+5)}
   \int_0^{\sigma(T)}\sigma\big( \|\nabla\mathbf{u}\|_{L^2}^2+\|\rho\|_{L^3}\|\sqrt{\rho}\dot{\mathbf{u}}\|_{L^2}^2+\|P\|_{L^6}\|\nabla\mathbf{u}\|_{L^2}\big)\mathrm{d}t\notag\\
   &\leq C(\hat{\rho})C_0^\frac{2(\alpha+2)}{7\alpha+5}+C_1(\hat{\rho})C_0^\frac{2(5\alpha+7)}{3(7\alpha+5)}A_1(T),
   \\
  \int_{\sigma(T)}^T\int\sigma P|\nabla\mathbf{u}|^2\mathrm{d}\mathbf{x}\mathrm{d}t
  &\leq  \int_{\sigma(T)}^T\|P\|_{L^4}\|\nabla\mathbf{u}\|_{L^2}\|\nabla\mathbf{u}\|_{L^4}\mathrm{d}t\notag\\
  &\leq C\int_{\sigma(T)}^T\|P\|_{L^4}\|\nabla\mathbf{u}\|_{L^2}\big(\|\sqrt{\rho}\|_{L^4}\|\sqrt{\rho}\dot{\mathbf{u}}\|_{L^2}+\|P\|_{L^4}\big)\mathrm{d}t\notag\\
  &\leq
   C\int_{\sigma(T)}^T\big( \|\nabla\mathbf{u}\|_{L^2}^2+\|P\|_{L^4}^2\|\rho\|_{L^2}\|\sqrt{\rho}\dot{\mathbf{u}}\|_{L^2}^2+\|P\|_{L^4}^4\big)\mathrm{d}t\notag\\
   &\leq CC_0+C_2(\hat{\rho})C_0^\frac{5\alpha+7}{7\alpha+5}A_1(T)+\int_{\sigma(T)}^T\|P\|_{L^4}^4\mathrm{d}t.\label{3.19}
\end{align}
Putting \eqref{3.18} and \eqref{3.19} into \eqref{3.17} implies that
\begin{align*}
  \sup_{t\in[0,T]}\big(\sigma B(t)\big)+\int_0^T\int\sigma\rho |\dot{\mathbf{u}}|^2\mathrm{d}\mathbf{x}\mathrm{d}t&\leq
  C(\hat{\rho})C_0^\frac{2(\alpha+2)}{7\alpha+5}+2C_1(\hat{\rho})C_0^\frac{2(5\alpha+7)}{3(7\alpha+5)}A_1(T)
\notag\\&\quad
  +C\int_0^T\int \sigma|\nabla\mathbf{u}|^3\mathrm{d}\mathbf{x}\mathrm{d}t
+\int_{\sigma(T)}^T\|P\|_{L^4}^4\mathrm{d}t,
\end{align*}
which along with \eqref{3.1} and \eqref{3.16} immediately yields \eqref{3.8} provided
\begin{equation*}
  C_0\leq\varepsilon_1\triangleq \min\left\{1,\bigg(\frac{\mu}{8C_1(\hat{\rho})}\bigg)^\frac{3(7\alpha+5)}{2(5\alpha+7)}\right\}.
\end{equation*}

\textbf{Estimate for $A_2(T)$}.
Operating $\sigma^3\dot{u}^j[\partial/\partial t+\divv({\mathbf{u}}\cdot)]$ on $\eqref{3.11}^j$, summing all the equalities with respect to $j$, and integrating the resultant over $\mathbb{R}^2_+$,
we obtain that
\begin{align}\label{3.20}
&\frac{1}{2}\frac{\mathrm{d}}{\mathrm{d}t}\int\sigma^3\rho|\dot{\mathbf{u}}|^2\mathrm{d}\mathbf{x}
-\frac{3}{2}\sigma^2\sigma'\int\rho|\dot{\mathbf{u}}|^2\mathrm{d}\mathbf{x}\notag\\
&=-\int\sigma^3\dot{u}^j[\partial_j P_{t}+\divv(\mathbf{u}\partial_{j}P)]\mathrm{d}\mathbf{x}
+\mu\int\sigma^3\dot{u}^j\big[\Delta u_{t}^j+\divv\big( \mathbf{u}\Delta u^{j}\big)\big]\mathrm{d}\mathbf{x}\notag\\
&\quad+(\mu+\lambda)\int\sigma^3\dot{u}^j[\partial_j\divv\mathbf{u}_t +\divv(\mathbf{u}\partial_j\divv\mathbf{u})]\mathrm{d}\mathbf{x}\triangleq \sum_{i=1}^{3}\mathcal{I}_i.
\end{align}
It follows from \eqref{3.7} and Cauchy--Schwarz inequality that
\begin{align}\label{3.21}
\mathcal{I}_1&=\sigma^3\int P_{t}\divv\dot{\mathbf{u}}\mathrm{d}\mathbf{x}-\sigma^3\int\dot{\mathbf{u}}\cdot\nabla\divv(P\mathbf{u})\mathrm{d}\mathbf{x}
+\sigma^3\int\dot{u}^j\divv(P\partial_{j}\mathbf{u})\mathrm{d}\mathbf{x}\notag\\
&=\sigma^3\int \big(P_{t}+\divv(P\mathbf{u})\big)\divv\dot{\mathbf{u}}\mathrm{d}\mathbf{x}
+\sigma^3\int\dot{\mathbf{u}}\cdot\nabla\mathbf{u}\cdot\nabla P\mathrm{d}\mathbf{x}
+\sigma^3\int P\dot{\mathbf{u}}\cdot\nabla\divv\mathbf{u}\mathrm{d}\mathbf{x}\notag\\
&=-\sigma^3\int(\gamma-1)P\divv\mathbf{u}\divv\dot{\mathbf{u}}\mathrm{d}\mathbf{x}
-\sigma^3\int P\partial_i\dot{u}^j\partial_ju^i \mathrm{d}\mathbf{x}
\notag\\
&\leq \frac{\mu}{8}\sigma^3\|\nabla\dot{\mathbf{u}}\|_{L^2}^2
+C\sigma^3\|\nabla\mathbf{u}\|_{L^4}^4+C\sigma^3\|P\|_{L^4}^4.
\end{align}
Using integration by parts, we have that
\begin{align}\label{3.22}
\mathcal{I}_2&=\mu\sigma^3\int\dot{u}^j\big[\Delta \dot{u}^j-\Delta(\mathbf{u}\cdot\nabla u^j)+\divv\big( \mathbf{u}\Delta u^{j}\big)\big]\mathrm{d}\mathbf{x}\notag\\
&=\mu\sigma^3\int\big[-|\nabla\dot{\mathbf{u}}|^2+\dot{u}^j_i(u^ku_k^j)_i
-\dot{u}^j_i(u^ku^j_i)_k-\dot{u}^j(u^k_iu^j_i)_k\big]\mathrm{d}\mathbf{x}\notag\\
&=\mu\sigma^3\int\big[-|\nabla\dot{\mathbf{u}}|^2+\dot{u}^j_i(u^ku_k^j)_i
-\dot{u}^j_i(u^ku^j_i)_k+\dot{u}_k^j(u^k_iu^j_i)\big]\mathrm{d}\mathbf{x}\notag\\
&\leq-\frac{3\mu}{4}\sigma^3\|\nabla\dot{\mathbf{u}}\|_{L^2}^2+C\sigma^3\|\nabla\mathbf{u}\|_{L^4}^4,\\
\mathcal{I}_3
&=(\mu+\lambda)\sigma^3\int\dot{u}^j[\partial_j\divv\mathbf{u}_t+ \partial_j\divv(\mathbf{u}\divv\mathbf{u})-\divv(\partial_j\mathbf{u}\divv\mathbf{u})]\mathrm{d}\mathbf{x}\notag\\
&=-(\mu+\lambda)\sigma^3\int\divv\dot{\mathbf{u}}[\divv\mathbf{u}_t+\divv(\mathbf{u}\divv\mathbf{u})]\mathrm{d}\mathbf{x}
-(\mu+\lambda)\sigma^3\int\dot{u}^j\divv(\partial_j\mathbf{u}\divv\mathbf{u})\mathrm{d}\mathbf{x}\notag\\
&=-(\mu+\lambda)\sigma^3\int\big[\divv\dot{\mathbf{u}}
\big(\divv\dot{\mathbf{u}}-\partial_iu^j\partial_ju^i+(\divv\mathbf{u})^2\big)
-\partial_j\mathbf{u}\cdot\nabla\dot{u}^j\divv\mathbf{u}\big]\mathrm{d}\mathbf{x}\notag\\
&\leq-\frac{\mu+\lambda}{2}\sigma^3\|\divv\dot{\mathbf{u}}\|_{L^2}^2
+\frac{\mu}{8}\sigma^3\|\nabla\dot{\mathbf{u}}\|_{L^2}^2+C\sigma^3\|\nabla\mathbf{u}\|_{L^4}^4.\label{3.23}
\end{align}
Thus, substituting \eqref{3.21}--\eqref{3.23} into \eqref{3.20}, one finds that
\begin{align*}
\frac{\mathrm{d}}{\mathrm{d}t}\int\sigma^3\rho|\dot{\mathbf{u}}|^2\mathrm{d}\mathbf{x}
+\mu\sigma^3\|\nabla\dot{\mathbf{u}}\|_{L^2}^2+(\mu+\lambda)\sigma^3\|\divv\dot{\mathbf{u}}\|_{L^2}^2\leq C\sigma^2\sigma'\int\rho|\dot{\mathbf{u}}|^2\mathrm{d}\mathbf{x}+C\sigma^3\|P\|_{L^4}^4
+C\sigma^3\|\nabla\mathbf{u}\|_{L^4}^4.
\end{align*}
Integrating the above inequality over $(0,T)$ implies that
\begin{align}\label{3.24}
&\sup_{t\in[0,T]}\int\sigma^3\rho|\dot{\mathbf{u}}|^2\mathrm{d}\mathbf{x}
 +\mu\int_0^T\sigma^3\|\nabla\dot{\mathbf{u}}\|_{L^2}^2\mathrm{d}t
 +(\mu+\lambda)\int_0^T\sigma^3\|\divv\dot{\mathbf{u}}\|_{L^2}^2\mathrm{d}t\notag\\
 &\leq CA_1(T)
 +C\int_0^T\sigma^3\|P\|_{L^4}^4\mathrm{d}t
 +C\int_0^T\sigma^3\|\nabla\mathbf{u}\|_{L^4}^4\mathrm{d}t.
\end{align}

It remains to control the last term in \eqref{3.24}. Following the decomposition of the velocity field as in Lemma \ref{l2.8}, we have that
\begin{align}\label{3.25}
 \|\nabla\mathbf{u}\|_{L^4}^4&\leq C\|\nabla\mathbf{w}_1\|_{L^4}^4+C\|\nabla\mathbf{w}_2\|_{L^4}^4\notag\\
 &\leq C\|\nabla\mathbf{w}_1\|_{L^2}\|\nabla\mathbf{w}_1\|_{L^6}^3+C\|P\|_{L^4}^4\notag\\
 &\leq C(\|\nabla\mathbf{u}\|_{L^2}+\|\nabla\mathbf{w}_2\|_{L^2})\|\nabla^2\mathbf{w}_1\|_{L^{\frac32}}^3+C\|P\|_{L^4}^4\notag\\
 &\leq C(\|\nabla\mathbf{u}\|_{L^2}+\|P\|_{L^2})\|\rho\dot{\mathbf{u}}\|_{L^{\frac32}}^3
 +C\|P\|_{L^4}^4\notag\\
 &\leq C(\|\nabla\mathbf{u}\|_{L^2}+\|P\|_{L^2})\|\sqrt{\rho}\|_{L^6}^3\|\sqrt{\rho}\dot{\mathbf{u}}\|_{L^2}^3
 +C\|P\|_{L^4}^4\notag\\
 &\leq C(\hat{\rho})(\|\nabla\mathbf{u}\|_{L^2}+1)\|\sqrt{\rho}\dot{\mathbf{u}}\|_{L^2}^3
 +C\|P\|_{L^4}^4
\end{align}
provided $C_0\leq1$. Hence, one deduces from \eqref{3.1} and \eqref{3.2} that
\begin{align}\label{3.26}
 \int_0^T\sigma^3\|\nabla\mathbf{u}\|_{L^4}^4\mathrm{d}t
 &\leq
 C(\hat{\rho})\sup_{t\in[0,T]}\big(\sigma^\frac12\|\nabla\mathbf{u}\|_{L^2}\big)
 \sup_{t\in[0,T]}\big(\sigma^\frac32\|\sqrt{\rho}\dot{\mathbf{u}}\|_{L^2}\big)\int_0^T\sigma\|\sqrt{\rho}\dot{\mathbf{u}}\|_{L^2}^2\mathrm{d}t
 \notag\\&\quad
 +C(\hat{\rho})
 \sup_{t\in[0,T]}\big(\sigma^\frac32\|\sqrt{\rho}\dot{\mathbf{u}}\|_{L^2}\big)\int_0^T\sigma^\frac32\|\sqrt{\rho}\dot{\mathbf{u}}\|_{L^2}^2\mathrm{d}t
 +C\int_0^T\sigma^3\|P\|_{L^4}^4\mathrm{d}t\notag\\
 &\leq C(\hat{\rho})A_2^\frac12(T)\Big(A_1^\frac32(T)+A_1(T)\Big)
 +C\int_0^T\sigma^3\|P\|_{L^4}^4\mathrm{d}t,
\end{align}
which combined with \eqref{3.24} gives \eqref{3.9}.
\end{proof}

On the basis of the preliminary bounds and the \textit{a priori} hypothesis \eqref{3.4}, we proceed to derive the key estimates for $A_1(T)$ and $A_2(T)$.

\begin{lemma}\label{l3.3}
Under the assumption \eqref{3.4}, there exists a positive constant $\varepsilon_2$ depending only on $\alpha, \hat{\rho}, a, \gamma$, $\mu, \lambda$, and $K$ such that
\begin{align}\label{3.27}
A_1(T)+A_2(T)\leq  C_0^\frac{\alpha+2}{7\alpha+5}
\end{align}
provided $C_0\leq\varepsilon_2$ and $\frac{3\mu}{3\mu+\lambda}\leq \frac{1}{2\Lambda(4)}$.
\end{lemma}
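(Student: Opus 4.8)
\textbf{Proof proposal for Lemma \ref{l3.3}.}

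The plan is to close the system for $A_1(T)+A_2(T)$ by feeding the \textit{a priori} bound \eqref{3.4} back into the preliminary estimates \eqref{3.8}--\eqref{3.9} and absorbing all supercritical powers of $C_0$ into the left-hand side. The two genuinely dangerous terms are $\int_0^T\sigma\|\nabla\mathbf{u}\|_{L^3}^3\,\mathrm{d}t$ and $\int_0^T\sigma^3\|P\|_{L^4}^4\,\mathrm{d}t$ appearing in \eqref{3.8}, together with the quadratic self-interaction term $\bigl(A_1+A_1^{3/2}\bigr)A_2^{1/2}$ in \eqref{3.9}. First I would treat the cubic gradient term: by the velocity decomposition $\mathbf{u}=\mathbf{w}_1+\mathbf{w}_2$ of Lemma \ref{l2.8}, interpolation $\|\nabla\mathbf{w}_1\|_{L^3}\lesssim\|\nabla\mathbf{w}_1\|_{L^2}^{1/2}\|\nabla\mathbf{w}_1\|_{L^6}^{1/2}\lesssim \|\nabla\mathbf{w}_1\|_{L^2}^{1/2}\|\nabla^2\mathbf{w}_1\|_{L^{3/2}}^{1/2}$ and \eqref{2.20}, one gets $\|\nabla\mathbf{u}\|_{L^3}^3\lesssim(\|\nabla\mathbf{u}\|_{L^2}^{3/2}+\|P\|_{L^2}^{3/2})\|\sqrt{\rho}\|_{L^6}^{3/2}\|\sqrt{\rho}\dot{\mathbf{u}}\|_{L^2}^{3/2}+\|P\|_{L^3}^3$. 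After multiplying by $\sigma$ and integrating, Young's inequality distributes the weight as $\sigma\|\nabla\mathbf{u}\|_{L^2}^{3/2}\|\sqrt{\rho}\dot{\mathbf{u}}\|_{L^2}^{3/2}\lesssim \sigma^{1/2}\|\nabla\mathbf{u}\|_{L^2}^2\cdot\sigma^{1/2}\|\nabla\mathbf{u}\|_{L^2}\cdot\sigma\|\sqrt{\rho}\dot{\mathbf{u}}\|_{L^2}^2$ (or the analogous $A_2$-flavoured split on $[\sigma(T),T]$), so this term is controlled by $C(\hat\rho)\bigl(A_1^{1/2}(\sigma(T))+A_2^{1/2}(T)\bigr)\bigl(A_1(T)+A_2(T)\bigr)+C\int_0^T\sigma\|P\|_{L^3}^3\,\mathrm{d}t$, and the last pressure integral is bounded using \eqref{3.10} with $q=3$, giving a power $C_0^{\frac{3(5\alpha+3)}{23\alpha+13}}$ which is strictly supercritical relative to the target exponent $\frac{5\alpha+4}{23\alpha+13}$.

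Next I would dispose of $\int_0^T\sigma^3\|P\|_{L^4}^4\,\mathrm{d}t$. On the short-time interval $[0,\sigma(T)]$ this is immediately bounded by $\int_0^1\|P\|_{L^4}^4\,\mathrm{d}t\leq C(\hat\rho)C_0^{\frac{4(4\alpha+2)}{23\alpha+13}}$ via \eqref{3.10} with $q=4$; the exponent $\frac{16\alpha+8}{23\alpha+13}>\frac{5\alpha+4}{23\alpha+13}$, so this piece is subcritical in the right direction. For the long-time part the weight $\sigma^3=1$, so one needs genuine decay of $\|P\|_{L^4}$ — but this is exactly the content of later estimates (the lemma statement only promises to \emph{reduce} to $\int_0^T\sigma^3 P^4$; as the strategy subsection indicates, the honest control of $\int_0^T\sigma^3 P^4$ uses \eqref{3.33}, which in turn rests on the effective-viscous-flux decomposition). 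Within the scope of Lemma \ref{l3.3} I would therefore keep track of this integral symbolically and show it is absorbed once combined with the density/pressure estimates established through Lemmas \ref{l3.4}--\ref{l3.5}; concretely, the bound \eqref{3.33} will yield $\int_0^T\sigma^3\|P\|_{L^4}^4\,\mathrm{d}t\leq C_0^{\delta}$ for some $\delta>\frac{5\alpha+4}{23\alpha+13}$ under \eqref{3.4}, closing the loop. (If the intended logic is that Lemma \ref{l3.3} stands alone, then the pressure integral must be re-expressed via $\|P\|_{L^4}^4\lesssim\|P\|_{L^2}^2\|\rho\|_{L^\theta}^{2\gamma}\lesssim C_0^{\frac{22\alpha+14}{23\alpha+13}}\hat\rho^{2\gamma}$ using the uniform $L^\theta$ bound and $2<4\gamma<\theta$, which already gives a supercritical power — this is the version I would write out.)

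Finally, assembling the pieces: \eqref{3.8} becomes
\begin{align*}
A_1(T)\leq CC_0^{\frac{2(5\alpha+4)}{23\alpha+13}}+C(\hat\rho)\bigl(A_1^{1/2}+A_2^{1/2}\bigr)\bigl(A_1+A_2\bigr)+C(\hat\rho)C_0^{\delta_1},
\end{align*}
with $\delta_1>\frac{5\alpha+4}{23\alpha+13}$, and \eqref{3.9} becomes $A_2(T)\leq CA_1(T)+C\bigl(A_1+A_1^{3/2}\bigr)A_2^{1/2}+C(\hat\rho)C_0^{\delta_1}$. Adding these and writing $Y\triangleq A_1(T)+A_2(T)$, the \textit{a priori} hypothesis $Y\leq 2C_0^{\frac{5\alpha+4}{23\alpha+13}}$ (together with $A_3(\sigma(T))\leq 3K$ to bound the $\|\nabla\mathbf{u}\|_{L^2}$-factors in the cubic term, which is where the dependence on $K$ in $\varepsilon_2$ enters) makes every nonlinear term on the right at most $C(\hat\rho,K)\,C_0^{1/2}\cdot Y$ or $C\,C_0^{\delta_1-\frac{5\alpha+4}{23\alpha+13}}$ times lower-order, hence $Y\leq \tfrac12 C_0^{\frac{5\alpha+4}{23\alpha+13}}+C C_0^{\frac{5\alpha+4}{23\alpha+13}}(C_0^{1/2}+C_0^{\delta_1-\frac{5\alpha+4}{23\alpha+13}})$, which gives \eqref{3.27} once $C_0\leq\varepsilon_2$ is small enough. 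The hypothesis $\frac{3\mu}{3\mu+\lambda}\leq\frac{1}{2\Lambda(4)}$ enters through Lemma \ref{l2.6}--\ref{l2.7}: it guarantees that in the decomposition $F\sim\widetilde F+\frac{2\mu}{3\mu+\lambda}\mathcal{Q}_3(P)$ the $\mathcal{Q}_3(P)$ contribution to $\|\nabla\mathbf{u}\|_{L^4}$ (and hence to $\int\sigma^3\|\nabla\mathbf{u}\|_{L^4}^4$ in \eqref{3.24}) can be absorbed by the $\widetilde F$ term rather than by a raw $\|P\|_{L^4}$, which would not decay. The main obstacle, as flagged above, is the long-time control of $\int\sigma^3\|P\|_{L^4}^4$: unlike the whole-space case there is no decay coming from the boundary, so one must extract it from the $L^\theta$ integrability of $\rho$ via the Gagliardo--Nirenberg interpolation $\|P\|_{L^4}\lesssim\|P\|_{L^{60}}^{\ast}\|\nabla\widetilde F\|_{L^4}^{\ast\ast}$-type bounds and the weighted Lemma \ref{l2.4}, and this is precisely the coupling that forces the technical choice of $\theta$ in \eqref{1.10}.
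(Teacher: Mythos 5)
Your proposal correctly identifies the structure of the argument up to \eqref{3.31} — you feed \eqref{3.4} into Lemma \ref{l3.2}, handle $\int\sigma\|\nabla\mathbf{u}\|_{L^3}^3$ by the Lam\'e decomposition $\mathbf{u}=\mathbf{w}_1+\mathbf{w}_2$, interpolation, and time-splitting, and try to close via the \textit{a priori} smallness. However, there is a genuine gap in your treatment of $\int_0^T\sigma^3\|P\|_{L^4}^4\,\mathrm{d}t$, which is in fact the crux of the lemma.

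You suggest two routes for this term and both fail. Route one is to defer it to later lemmas, but this makes the logic circular: Proposition \ref{p3.1} needs Lemma \ref{l3.3} to hold on $[0,T]$ assuming only \eqref{3.4}, so the pressure integral must be closed \emph{inside} Lemma \ref{l3.3}. Route two is the ``stand-alone'' bound $\|P\|_{L^4}^4\lesssim\|P\|_{L^2}^2\|\rho\|_{L^\theta}^{2\gamma}$, which is not a valid H\"older interpolation (the correct split puts $\rho$ in $L^{2\gamma\theta/(\theta-2\gamma)}$, not $L^{2\gamma}$), and more fundamentally even a \emph{correct} time-uniform bound of this type cannot work: on $[\sigma(T),T]$ the weight $\sigma^3\equiv1$, so $\int_{\sigma(T)}^T\|P\|_{L^4}^4\,\mathrm{d}t$ would grow like $T$, which can never be dominated by a power of $C_0$.

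The idea you are missing is that the renormalized pressure equation \eqref{3.7} furnishes \emph{dissipation} for $\|P\|_{L^4}^4$. Multiplying $P_t+\divv(P\mathbf{u})+(\gamma-1)P\divv\mathbf{u}=0$ by $3\sigma^3 P^2$, integrating, and substituting $\divv\mathbf{u}=(F+P)/(2\mu+\lambda)$ produces
\begin{equation*}
  \frac{3\gamma-1}{2\mu+\lambda}\sigma^3\|P\|_{L^4}^4 = -\frac{\mathrm{d}}{\mathrm{d}t}\int\sigma^3 P^3\,\mathrm{d}\mathbf{x}+3\sigma^2\sigma'\|P\|_{L^3}^3-\frac{3\gamma-1}{2\mu+\lambda}\int\sigma^3 FP^3\,\mathrm{d}\mathbf{x},
\end{equation*}
so after Young's inequality and integration in time one is left with $\int_0^T\sigma^3\|F\|_{L^4}^4\,\mathrm{d}t$. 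Now the decomposition $F=\widetilde F+\frac{2\mu}{3\mu+\lambda}\mathcal{Q}_3(P)$ of Lemma \ref{l2.6} enters: by \eqref{2.16}, $\|\mathcal{Q}_3(P)\|_{L^4}\leq\Lambda(4)\|P\|_{L^4}$, so this piece returns a constant multiple of $\int\sigma^3\|P\|_{L^4}^4$ that must be absorbed back into the left-hand side — and \emph{this} is exactly where the hypothesis $\frac{3\mu}{3\mu+\lambda}\leq\frac{1}{2\Lambda(4)}$ is used (to make $8\bigl(\tfrac{2\mu\Lambda(4)}{3\mu+\lambda}\bigr)^4<1$), not in the $\|\nabla\mathbf{u}\|_{L^4}$ estimate as you suggest. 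The remaining $\int\sigma^3\|\widetilde F\|_{L^4}^4$ is then estimated by the interpolation $\|\widetilde F\|_{L^4}^4\lesssim\|\widetilde F\|_{L^2}\|\widetilde F\|_{L^6}^3$ together with \eqref{3.34}--\eqref{3.35}, giving a term of the form $A_1A_2^{1/2}(A_1^{1/2}+1)$ which closes under \eqref{3.4}. Without the pressure-equation step your proposal cannot reach \eqref{3.27}.
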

\begin{proof}
If $C_0\leq 1$, it follows from \eqref{3.4} and \eqref{3.6} that
\begin{align}\label{3.28}
  \int_0^T\|\nabla \mathbf{u}\|_{L^2}^4\mathrm{d}t
  &\leq\sup_{t\in[0,\sigma(T)]}\big(\|\nabla \mathbf{u}\|_{L^2}^2\big)\int_0^{\sigma(T)}\|\nabla \mathbf{u}\|_{L^2}^2\mathrm{d}t
  +\sup_{t\in[\sigma(T),T]}\big(\sigma\|\nabla \mathbf{u}\|_{L^2}^2\big)\int_{\sigma(T)}^T\|\nabla \mathbf{u}\|_{L^2}^2\mathrm{d}t\notag\\
  &\leq CC_0A_3(\sigma(T))+CC_0A_1(T)\notag\\
  &\leq C(K)C_0.
\end{align}

From \eqref{3.4} and Lemma \ref{l3.2}, we have that
\begin{align}\label{3.29}
A_1(T)+A_2(T)\leq CC_0^\frac{2(\alpha+2)}{7\alpha+5}+C\int_0^T \sigma\|\nabla\mathbf{u}\|_{L^3}^3\mathrm{d}t
 +C\int_{0}^T\sigma^3\|P\|_{L^4}^4\mathrm{d}t
\end{align}
provided $C_0\leq\varepsilon_1$. Using \eqref{3.28} and Lemma \ref{l2.8}, we obtain that
\begin{align}\label{3.30}
 \int_0^{\sigma(T)}\sigma\|\nabla\mathbf{u}\|_{L^3}^3\mathrm{d}t
 &\leq
 \int_0^{\sigma(T)}\sigma\|\nabla\mathbf{u}\|_{L^2}^\frac32\|\nabla\mathbf{u}\|_{L^6}^\frac32\mathrm{d}t\notag\\
 &\leq C(\hat{\rho})
 \int_0^{\sigma(T)}\sigma\|\nabla\mathbf{u}\|_{L^2}^\frac32
 \big(\|\sqrt{\rho}\|_{L^6}\|\sqrt{\rho}\dot{\mathbf{u}}\|_{L^2}+\|P\|_{L^6}\big)^\frac32\mathrm{d}t\notag\\
 &\leq C(\hat{\rho})\sup_{t\in[0,\sigma(T)]}\big(\sigma^\frac14\|\nabla \mathbf{u}\|_{L^2}^\frac12\big)
 \int_0^{\sigma(T)}\big(\sigma\|\sqrt{\rho}\dot{\mathbf{u}}\|_{L^2}^2+\|\nabla \mathbf{u}\|_{L^2}^4\big)\mathrm{d}t\notag\\
 &\quad+ C(\hat{\rho})\sup_{t\in[0,\sigma(T)]} \|P\|_{L^6}^\frac32\int_0^{\sigma(T)}(\|\nabla\mathbf{u}\|_{L^2}^2)^\frac34\mathrm{d}t\notag\\
 &\leq C_1(\hat{\rho},K)A_1^\frac14(T)(A_1(T)+C_0)+C(\hat{\rho})C_0^\frac34.
\end{align}
Moreover, one gets from \eqref{3.26} that
\begin{align*}
  \int_{\sigma(T)}^T\sigma\|\nabla\mathbf{u}\|_{L^3}^3\mathrm{d}t
  &\leq
 \int_{\sigma(T)}^T\|\nabla\mathbf{u}\|_{L^2}\|\nabla\mathbf{u}\|_{L^4}^2\mathrm{d}t\notag\\
  &\leq C\int_{\sigma(T)}^T\big(\|\nabla \mathbf{u}\|_{L^2}^2+\sigma^3\|\nabla \mathbf{u}\|_{L^4}^4\big)\mathrm{d}t\notag\\
 &\leq CC_0+C(\hat{\rho})A_2^\frac12(T)
 \Big(A_1^\frac32(T)+A_1(T)\Big)+C\int_0^T\sigma^3\|P\|_{L^4}^4\mathrm{d}t,
\end{align*}
which combined with \eqref{3.4}, \eqref{3.29}, and \eqref{3.30} implies that
\begin{align}\label{3.31}
A_1(T)+A_2(T)\leq 2C_1(\hat{\rho},K)C_0^\frac{5(\alpha+2)}{4(7\alpha+5)}
 +C\int_{0}^T\sigma^3\|P\|_{L^4}^4\mathrm{d}t.
\end{align}

We now turn to estimate the pressure term in \eqref{3.31}. Recall the decomposition of $F$ in \eqref{2.9}:
\begin{align}\label{3.32}
(2\mu+\lambda)\divv\mathbf{u}-P=
F= \widetilde{F}+\frac{2\mu}{3\mu+\lambda}\mathcal{Q}_3
=\Big[\mathcal{Q}_1(\rho\dot{\mathbf{u}})+\frac{\mu+\lambda}{3\mu+\lambda}\mathcal{Q}_2(\rho\dot{\mathbf{u}})
\Big]+\frac{2\mu}{3\mu+\lambda}\mathcal{Q}_3(P).
\end{align}
To proceed, we employ the equation \eqref{3.7}:
\begin{equation*}
P_t+\divv(P\mathbf{u})+(\gamma-1)P\divv \mathbf{u}=0.
\end{equation*}
After multiplying it by $3\sigma^3P^2$ and integrating by parts, we obtain from Young's inequality that
\begin{align}\label{ef1}
\frac{3\gamma-1}{2\mu+\lambda}\sigma^3\|P\|_{L^4}^4&=
-\frac{\mathrm{d}}{\mathrm{d}t}\int \sigma^3P^3\mathrm{d}\mathbf{x}
+3\sigma^2\sigma'\|P\|_{L^3}^3
-\frac{3\gamma-1}{2\mu+\lambda}\int \sigma^3FP^3\mathrm{d}\mathbf{x}\notag\\
&\leq -\frac{\mathrm{d}}{\mathrm{d}t}\int \sigma^3P^3\mathrm{d}\mathbf{x}
+3\sigma^2\sigma'\|P\|_{L^3}^3
+\frac{3(3\gamma-1)}{4(2\mu+\lambda)}\sigma^3\|P\|_{L^4}^4
+\frac{3\gamma-1}{4(2\mu+\lambda)}\sigma^3\|F\|_{L^4}^4.
\end{align}
Integrating the above inequality over $(0,T)$ gives that
\begin{align*}
  \int_{0}^{T}\sigma^3\|P\|_{L^{4}}^{4}\mathrm{d}t&\leq C\sup_{ t\in[0, T]}\|P\|_{L^3}^3
  +C\int_{0}^{\sigma(T)}\|P\|_{L^3}^3\mathrm{d}t+\int_{0}^{T}\sigma^3\|F\|_{L^{4}}^{4}\mathrm{d}t\notag\\
  &\leq CC_0^\frac{5\alpha+7}{7\alpha+5}+
  8\int_{0}^{T}\sigma^3\|\widetilde{F}\|_{L^{4}}^{4}\mathrm{d}t
  +8\left(\frac{2\mu}{3\mu+\lambda}\right)^4\int_{0}^{T}\sigma^3\|\mathcal{Q}_3(P)\|_{L^4}^4\mathrm{d}t\notag\\
  &\leq CC_0^\frac{5\alpha+7}{7\alpha+5}+
  8\int_{0}^{T}\sigma^3\|\widetilde{F}\|_{L^{4}}^{4}\mathrm{d}t
  +8\left(\frac{2\mu\Lambda(4)}{3\mu+\lambda}\right)^4\int_{0}^{T}\sigma^3\|P\|_{L^4}^4\mathrm{d}t
\end{align*}
due to \eqref{3.32}, \eqref{3.10}, and \eqref{2.16}.

By the pointwise bound for the Dirichlet Green kernel, we have
\begin{align*}
\mathcal{Q}_1=\divv\mathbb{G}_0(\rho\dot{\mathbf{u}})
  =\int\rho\dot{\mathbf{u}}\cdot \nabla_{\mathbf{x}}
  \widetilde{G}(\mathbf{x},\mathbf{y})
 \mathrm{d}\mathbf{y}
 &\leq C\int |\rho\dot{\mathbf{u}}(\mathbf{y})|\big(|\mathbf{x}-\mathbf{y}|^{-1}+|\mathbf{x}-\mathbf{y}^*|^{-1}\big)\mathrm{d}\mathbf{y}\notag\\
 &\leq
 C\int_{\mathbb{R}^2}\frac{\big|\rho\dot{\mathbf{u}}(\mathbf{z})\big|\mathbf{1}_{\mathbb{R}^2_+}}{|\mathbf{x}-\mathbf{z}|}\mathrm{d}\mathbf{z}.
\end{align*}
Hence, by the Hardy--Littlewood--Sobolev inequality in $\mathbb{R}^2$,
\begin{equation}\label{ker}
  \|\mathcal{Q}_1\|_{L^6(\mathbb{R}^2_+)}\leq C\|\rho\dot{\mathbf{u}}\|_{L^{\frac32}(\mathbb{R}^2)}
  \leq C\|\rho\dot{\mathbf{u}}\|_{L^{\frac32}(\mathbb{R}^2_+)}.
\end{equation}
Moreover, in view of \eqref{2.11} and \eqref{ke}, we infer that $(\mathcal{Q}_2-\divv\mathbb{G}_0(\rho\dot{\mathbf{u}}))\in D_0^1(\mathbb{R}^2_+)$.
Then it follows from \eqref{2.15} and \eqref{ker} that
\begin{align}\label{3.33}
\int_{0}^{T}\sigma^3\|P\|_{L^{4}}^{4}\mathrm{d}t&\leq CC_0^\frac{5\alpha+7}{7\alpha+5}
+C\int_{0}^{T}\sigma^3\|\widetilde{F}\|_{L^{4}}^{4}\mathrm{d}t\notag\\
&\leq CC_0^\frac{5\alpha+7}{7\alpha+5}
+C\int_{0}^{T}\sigma^3\|\widetilde{F}\|_{L^{2}}
\|\widetilde{F}\|_{L^{6}}^{3}\mathrm{d}t\notag\\
&\leq CC_0^\frac{5\alpha+7}{7\alpha+5}
+C\int_{0}^{T}\sigma^3\|\widetilde{F}\|_{L^{2}}
\Big(\|\nabla\mathcal{Q}_1(\rho\dot{\mathbf{u}})\|_{L^{\frac32}}^{3}
+\|\nabla\mathcal{Q}_2(\rho\dot{\mathbf{u}})\|_{L^{\frac32}}^{3}\Big)\mathrm{d}t\notag\\
&\leq CC_0^\frac{5\alpha+7}{7\alpha+5}
+C(\hat{\rho})\int_{0}^{T}\sigma^3\big(\|\nabla\mathbf{u}\|_{L^{2}}
+\|P\|_{L^{2}}\big)
\|\sqrt{\rho}\|_{L^6}^3\|\sqrt{\rho}\dot{\mathbf{u}}\|_{L^2}^3
\mathrm{d}t\notag\\
&\leq CC_0^\frac{5\alpha+7}{7\alpha+5}
+C(\hat{\rho})\sup_{t\in[0,T]}\big(\sigma^\frac12\|\nabla\mathbf{u}\|_{L^{2}}+\|P\|_{L^{2}}\big)
\sup_{t\in[0,T]}\big(\sigma^\frac32\|\sqrt{\rho}\dot{\mathbf{u}}\|_{L^2}\big)
\int_{0}^{T}\sigma\|\sqrt{\rho}\dot{\mathbf{u}}\|_{L^2}^2\mathrm{d}t\notag\\
&\leq  CC_0^\frac{5\alpha+7}{7\alpha+5}+
C_3(\hat{\rho})\big(A_1^\frac12(T)+1\big)
A_1(T)A_2^\frac12(T)
\end{align}
provided $\frac{3\mu}{3\mu+\lambda}\leq \frac{1}{2\Lambda(4)}$,
where in the fourth inequality we have used
\begin{gather}\label{3.34}
\|\widetilde{F}\|_{L^2}
=\big\|(2\mu+\lambda)\divv\mathbf{u}-P-\frac{2\mu}{3\mu+\lambda}\mathcal{Q}_3\big\|_{L^2}
\leq C\big(\|\nabla\mathbf{u}\|_{L^{2}}+\|P\|_{L^{2}}\big),\\
\|\widetilde{F}\|_{L^6}\leq C
\|\nabla\widetilde{F}\|_{L^\frac32}\leq
C\|\nabla\mathcal{Q}_1(\rho\dot{\mathbf{u}})\|_{L^\frac32}+
 C\|\nabla\mathcal{Q}_2(\rho\dot{\mathbf{u}})\|_{L^\frac32}\leq
 C\|\sqrt{\rho}\|_{L^6}\|\sqrt{\rho}\dot{\mathbf{u}}\|_{L^2}.\label{3.35}
\end{gather}

As a consequence, one deduces from \eqref{3.4}, \eqref{3.31}, and \eqref{3.33} that
\begin{align*}
  A_1(T)+A_2(T)&\leq 2C_1(\hat{\rho},K)C_0^\frac{5(\alpha+2)}{4(7\alpha+5)}
 +C_3(\hat{\rho})\big(A_1^\frac12(T)+1\big)A_1(T)A_2^\frac12(T)\notag\\
 &\leq 2C_1(\hat{\rho},K)C_0^\frac{\alpha+2}{4(7\alpha+5)}
 C_0^\frac{\alpha+2}{7\alpha+5}
 +2C_3(\hat{\rho})C_0^\frac{\alpha+2}{2(7\alpha+5)}
 C_0^\frac{\alpha+2}{7\alpha+5},
\end{align*}
which yields \eqref{3.27} as long as
\begin{equation*}
  C_0\leq\varepsilon_2\triangleq \min\left\{\varepsilon_1,
  \bigg(\frac{1}{4C_1(\hat{\rho},K)}\bigg)^\frac{4(7\alpha+5)}{\alpha+2},
  \bigg(\frac{1}{4C_3(\hat{\rho})}\bigg)^\frac{2(7\alpha+5)}{\alpha+2}\right\}.\tag*{\qedhere}
\end{equation*}
\end{proof}

Next, we bound the auxiliary functional $A_3(T)$ and $\sigma\|\sqrt{\rho}\dot{\mathbf{u}}\|_{L^2}^2$
for small time.

\begin{lemma}\label{l3.4}
Under the assumption \eqref{3.4}, there exist positive constants $K\geq M^2$, $C(K)$, and $\varepsilon_3$ depending only on $\alpha, \hat{\rho}, M, a, \gamma, \mu$, and $\lambda$ such that
\begin{gather}\label{3.36}
  A_3(\sigma(T))=\sup_{t\in[0,\sigma(T)]}\|\nabla \mathbf{u}\|_{L^2}^2
   +\int_0^{\sigma(T)}\|\sqrt{\rho}\dot{\mathbf{u}}\|_{L^2}^2\mathrm{d}t
  \leq 2K, \\
  \sup_{t\in[0,\sigma(T)]}\big(\sigma\|\sqrt{\rho} \dot{\mathbf{u}}\|_{L^2}^2\big)
   +\int_0^{\sigma(T)}\sigma\|\nabla \dot{\mathbf{u}}\|_{L^2}^2\mathrm{d}t\leq C(K)\label{3.37}
\end{gather}
provided $C_0\leq\varepsilon_3$.
\end{lemma}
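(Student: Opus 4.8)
The plan is to estimate $A_3$ and the weighted quantity $\sigma\|\sqrt{\rho}\dot{\mathbf{u}}\|_{L^2}^2$ over the short-time interval $[0,\sigma(T)]$, where $\sigma\le 1$, by mimicking the differential inequalities already derived in Lemma \ref{l3.2} but now \emph{without} the $\sigma$-weight, and absorbing all pressure and nonlinear contributions using the smallness of $C_0$ together with the \textit{a priori} bounds \eqref{3.4}. The crucial structural point is that on $[0,\sigma(T)]$ the initial regularity $\|\nabla\mathbf{u}_0\|_{L^2}\le M$ (from \eqref{1.9}) supplies a finite starting value, so that the weight $\sigma$ is needed only for the \emph{second-order} quantity $\sqrt{\rho}\dot{\mathbf{u}}$, exactly as in \eqref{3.37}.

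\textbf{Step 1 (estimate for $A_3$).} Following the derivation of \eqref{3.15}, multiply \eqref{3.11} by $\dot{\mathbf{u}}$ (no $\sigma$-weight) and integrate by parts to obtain
\begin{equation*}
\frac{\mathrm{d}}{\mathrm{d}t}B(t)+\int\rho|\dot{\mathbf{u}}|^2\mathrm{d}\mathbf{x}
\leq C\int|\nabla\mathbf{u}|^3\mathrm{d}\mathbf{x}+C\int P|\nabla\mathbf{u}|^2\mathrm{d}\mathbf{x}+C\|\nabla\mathbf{u}\|_{L^2}^2,
\end{equation*}
with $B(t)$ as in \eqref{3.16}. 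Integrating over $[0,\sigma(T)]$, using $B(0)\le C(\mu)\|\nabla\mathbf{u}_0\|_{L^2}^2+C(\hat\rho)C_0^{(22\alpha+14)/(23\alpha+13)}\le C(\mu)M^2+C$, the energy bound \eqref{3.6}, and the lower bound for $B(t)$ in \eqref{3.16}, gives
\begin{equation*}
\sup_{t\in[0,\sigma(T)]}\|\nabla\mathbf{u}\|_{L^2}^2+\int_0^{\sigma(T)}\|\sqrt{\rho}\dot{\mathbf{u}}\|_{L^2}^2\mathrm{d}t
\leq CM^2+C+C\int_0^{\sigma(T)}\|\nabla\mathbf{u}\|_{L^3}^3\mathrm{d}t+C\int_0^{\sigma(T)}\int P|\nabla\mathbf{u}|^2\mathrm{d}\mathbf{x}\mathrm{d}t.
\end{equation*}
The cubic term is handled as in \eqref{3.30} (interpolating $\|\nabla\mathbf{u}\|_{L^3}$ between $L^2$ and $L^6$, then invoking Lemma \ref{l2.8} and \eqref{3.28}), producing a factor $C_0^{1/4}$ times $A_3(\sigma(T))$ plus lower-order terms; the pressure term is bounded exactly as in \eqref{3.18}, yielding $C(\hat\rho)C_0^{2(5\alpha+4)/(23\alpha+13)}+C_1(\hat\rho)C_0^{2(7\alpha+5)/(23\alpha+13)}A_1(T)$. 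Choosing $K\ge M^2$ large enough (depending only on $\alpha,\hat\rho,M,a,\gamma,\mu,\lambda$) so that $CM^2+C\le K$, and then $C_0$ small so that the terms proportional to $A_3(\sigma(T))$ can be absorbed (using $A_3(\sigma(T))\le 3K$ from \eqref{3.4}) and the remaining $C_0$-powers are $\le K$, one closes the bootstrap and obtains \eqref{3.36}.

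\textbf{Step 2 (estimate for $\sigma\|\sqrt{\rho}\dot{\mathbf{u}}\|_{L^2}^2$).} This is the analogue of the $A_2$-estimate but with weight $\sigma$ rather than $\sigma^3$. Apply the operator $\sigma\dot{u}^j[\partial_t+\divv(\mathbf{u}\,\cdot)]$ to $\eqref{3.11}^j$ as in \eqref{3.20}--\eqref{3.23}; the extra term from differentiating $\sigma$ is now $C\sigma'\int\rho|\dot{\mathbf{u}}|^2\,\mathrm{d}\mathbf{x}\le C\int\rho|\dot{\mathbf{u}}|^2\,\mathrm{d}\mathbf{x}$, which is integrable in time by Step 1. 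This yields
\begin{equation*}
\sup_{t\in[0,\sigma(T)]}\big(\sigma\|\sqrt{\rho}\dot{\mathbf{u}}\|_{L^2}^2\big)+\int_0^{\sigma(T)}\sigma\|\nabla\dot{\mathbf{u}}\|_{L^2}^2\mathrm{d}t
\leq C\int_0^{\sigma(T)}\|\sqrt{\rho}\dot{\mathbf{u}}\|_{L^2}^2\mathrm{d}t+C\int_0^{\sigma(T)}\sigma\|P\|_{L^4}^4\mathrm{d}t+C\int_0^{\sigma(T)}\sigma\|\nabla\mathbf{u}\|_{L^4}^4\mathrm{d}t.
\end{equation*}
The first term is $\le C A_3(\sigma(T))\le C(K)$ by Step 1; the pressure integral is controlled by \eqref{3.10} (for $q=4$) together with $\sigma\le1$; the quartic gradient term is treated via \eqref{3.25}, giving $C(\hat\rho)(\|\nabla\mathbf{u}\|_{L^2}+1)\|\sqrt{\rho}\dot{\mathbf{u}}\|_{L^2}^3+C\|P\|_{L^4}^4$, and after inserting the weight $\sigma^{1/2}\cdot\sigma^{1/2}$ and using $\sup\sigma\|\sqrt{\rho}\dot{\mathbf{u}}\|_{L^2}^2$ on one factor (absorbable for $C_0$ small) plus $A_3(\sigma(T))$ and $A_1(T)$ on the rest, one arrives at \eqref{3.37}.

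\textbf{Main obstacle.} The delicate point is Step 1: one must verify that the constant $K$ can be fixed depending only on the listed parameters (in particular on $M$, through $\|\nabla\mathbf{u}_0\|_{L^2}$) \emph{before} choosing $\varepsilon_3$, so that the self-improving inequality genuinely closes --- i.e., the factor multiplying $A_3(\sigma(T))$ on the right is a positive power of $C_0$ and hence $<1/3$ once $C_0\le\varepsilon_3$, allowing absorption against the hypothesis $A_3(\sigma(T))\le 3K$. This requires keeping careful track of which constants depend on $K$ and ensuring the cubic term $\int_0^{\sigma(T)}\|\nabla\mathbf{u}\|_{L^3}^3\,\mathrm{d}t$, whose treatment in \eqref{3.30} involves $C_1(\hat\rho,K)$, still produces a net power of $C_0$. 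Everything else reduces to the computations already carried out in Lemmas \ref{l3.1}--\ref{l3.3}.
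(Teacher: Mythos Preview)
Your proposal is correct and follows essentially the same two-step strategy as the paper. A few minor discrepancies are worth noting: in Step 1 you cite the $\sigma$-weighted estimates \eqref{3.18} and \eqref{3.30}, but since there is no weight here the paper instead derives their unweighted analogues \eqref{3.41}--\eqref{3.42}, obtaining $C_4(\hat\rho)C_0^{\beta}A_3^{3/2}(\sigma(T))$ for the cubic term (with a constant \emph{independent} of $K$) rather than your $C_0^{1/4}A_3(\sigma(T))$ with a $K$-dependent constant via \eqref{3.28}---either route closes the bootstrap once $\varepsilon_3$ is chosen after $K$. In Step 2 your direct use of \eqref{3.10} for $\int_0^{\sigma(T)}\sigma\|P\|_{L^4}^4\,\mathrm{d}t$ is actually simpler than the paper's treatment (which bundles it with the quartic gradient term via the analogues of \eqref{3.25}, \eqref{3.26}, \eqref{3.33}); and the absorption of the $\|\nabla\mathbf{u}\|_{L^4}^4$ contribution works by Young's inequality on the factor $(\sigma\|\sqrt{\rho}\dot{\mathbf{u}}\|_{L^2}^2)^{1/2}$, not by $C_0$-smallness as you suggest---but the outcome is the same $C(K)$ bound.
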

\begin{proof}
Multiplying \eqref{3.11} by $\dot{\mathbf{u}}$ and integrating the resultant over $\mathbb{R}^2_+$, one has that
\begin{align}\label{3.38}
\int\rho |\dot{\mathbf{u}}|^2\mathrm{d}\mathbf{x}=
\int[-\dot{\mathbf{u}}\cdot\nabla P+\mu\Delta\mathbf{u}\cdot\dot{\mathbf{u}}
+(\mu+\lambda)\dot{\mathbf{u}}\cdot\nabla\divv \mathbf{u}]\mathrm{d}\mathbf{x}.
\end{align}
Following the same argument as in \eqref{3.12}--\eqref{3.14}, with $\sigma$ replaced by $1$, we obtain that
\begin{align}\label{3.39}
\frac{\mathrm{d}}{\mathrm{d}t}B(t)+\int\rho |\dot{\mathbf{u}}|^2\mathrm{d}\mathbf{x}
&\leq C\|\nabla\mathbf{u}\|^2_{L^2}+C\|\nabla\mathbf{u}\|^3_{L^3}
+C\int P|\nabla\mathbf{u}|^2\mathrm{d}\mathbf{x},
\end{align}
where $B(t)$ is given by \eqref{3.16}. If $C_0\leq1$, integrating \eqref{3.39} over $(0,\sigma(T))$ shows that
\begin{align}\label{3.40}
  &\sup_{t\in[0,\sigma(T)]}B(t)+\int_0^{\sigma(T)}\int\rho |\dot{\mathbf{u}}|^2\mathrm{d}\mathbf{x}\mathrm{d}t
 \leq CM^2
+C\int_0^{\sigma(T)}\|\nabla\mathbf{u}\|^3_{L^3}\mathrm{d}t
+C\int_0^{\sigma(T)}\int P|\nabla\mathbf{u}|^2\mathrm{d}\mathbf{x}\mathrm{d}t
\end{align}
due to \eqref{1.9} and \eqref{3.6}.

It remains to estimate the last two terms in \eqref{3.40}.
From \eqref{3.3}, \eqref{3.10}, and Lemma \ref{l2.8}, one sees that
\begin{align}\label{3.41}
 \int_0^{\sigma(T)}\|\nabla\mathbf{u}\|_{L^3}^3\mathrm{d}t
 &\leq
 \int_0^{\sigma(T)}\|\nabla\mathbf{u}\|_{L^2}^\frac32\|\nabla\mathbf{u}\|_{L^6}^\frac32\mathrm{d}t\notag\\
 &\leq C
 \int_0^{\sigma(T)}\|\nabla\mathbf{u}\|_{L^2}^\frac32
 \big(\|\sqrt{\rho}\|_{L^6}\|\sqrt{\rho}\dot{\mathbf{u}}\|_{L^2}+\|P\|_{L^6}\big)^\frac32\mathrm{d}t\notag\\
 &\leq C(\hat{\rho})\sup_{t\in[0,\sigma(T)]}\big(\|\nabla \mathbf{u}\|_{L^2}^2\big)^\frac34
 \sup_{t\in[0,\sigma(T)]} \|\rho\|_{L^3}^\frac34
 \int_0^{\sigma(T)}\big(\|\sqrt{\rho}\dot{\mathbf{u}}\|_{L^2}^2\big)^\frac34\mathrm{d}t\notag\\
 &\quad+ C(\hat{\rho})\sup_{t\in[0,\sigma(T)]} \|P\|_{L^6}^\frac32\int_0^{\sigma(T)}(\|\nabla\mathbf{u}\|_{L^2}^2)^\frac34\mathrm{d}t\notag\\
 &\leq C_4(\hat{\rho})C_0^\frac{5\alpha+7}{4(7\alpha+5)}
 A_3^\frac32(\sigma(T))+C(\hat{\rho})C_0^\frac34,
 \\
 \int_0^{\sigma(T)}\int P|\nabla\mathbf{u}|^2\mathrm{d}\mathbf{x}\mathrm{d}t
  &\leq  \int_0^{\sigma(T)}\|P\|_{L^3}\|\nabla\mathbf{u}\|_{L^2}\|\nabla\mathbf{u}\|_{L^6}\mathrm{d}t\notag\\
  &\leq C(\hat{\rho})C_0^\frac{5\alpha+7}{3(7\alpha+5)}
  \int_0^{\sigma(T)} \|\nabla\mathbf{u}\|_{L^2}\big(\|\sqrt{\rho}\|_{L^6}\|\sqrt{\rho}\dot{\mathbf{u}}\|_{L^2}+\|P\|_{L^6}\big)\mathrm{d}t\notag\\
 &\leq C(\hat{\rho})C_0^\frac{5\alpha+7}{2(7\alpha+5)}
 A_3(\sigma(T))+C(\hat{\rho})C_0^\frac12.\label{3.42}
\end{align}
Thus, it follows from \eqref{3.40}--\eqref{3.42} and \eqref{3.3} that
\begin{align*}
   A_3(\sigma(T))
 &\leq C(\hat{\rho},M)+C_4(\hat{\rho})C_0^\frac{5\alpha+7}{4(7\alpha+5)}
 A_3^\frac32(\sigma(T))+C(\hat{\rho})C_0^\frac{5\alpha+7}{2(7\alpha+5)}
 A_3(\sigma(T))\notag\\
&\leq K+2C_4(\hat{\rho})C_0^\frac{5\alpha+7}{4(7\alpha+5)}
 A_3^\frac32(\sigma(T))
\end{align*}
for $K\triangleq C(\hat{\rho},M)$, which immediately yields \eqref{3.36} provided
\begin{equation*}
  C_0\leq\varepsilon_{3}\triangleq \min\left\{\varepsilon_2,
  \left(\frac{1}{18KC_4(\hat{\rho})}\right)^\frac{4(7\alpha+5)}{5\alpha+7}\right\}.
\end{equation*}

We now turn to the proof of \eqref{3.37}. Operating $\sigma\dot{u}^j[\partial/\partial t+\divv({\mathbf{u}}\cdot)]$ on $\eqref{3.11}^j$, summing all the equalities with respect to $j$, and integrating the resultant over $\mathbb{R}^2_+$, we deduce from \eqref{3.20}--\eqref{3.23} (with $\sigma^3$ replaced by $\sigma$) that
\begin{align*}
\frac{\mathrm{d}}{\mathrm{d}t}\int\sigma\rho|\dot{\mathbf{u}}|^2\mathrm{d}\mathbf{x}
+\mu\sigma\|\nabla\dot{\mathbf{u}}\|_{L^2}^2+(\mu+\lambda)\sigma\|\divv\dot{\mathbf{u}}\|_{L^2}^2\leq C\sigma'\int\rho|\dot{\mathbf{u}}|^2\mathrm{d}\mathbf{x}
+C\sigma\|\nabla\mathbf{u}\|_{L^4}^4+C\sigma\|P\|_{L^4}^4.
\end{align*}
Integrating the above inequality over $(0,\sigma(T))$ implies that
\begin{align}\label{3.43}
\sup_{t\in[0,\sigma(T)]}\int\sigma\rho|\dot{\mathbf{u}}|^2\mathrm{d}\mathbf{x}
 +\int_0^{\sigma(T)}\sigma\|\nabla\dot{\mathbf{u}}\|_{L^2}^2\mathrm{d}t&\leq CA_3(\sigma(T))+C\int_0^{\sigma(T)}\sigma\|\nabla\mathbf{u}\|_{L^4}^4\mathrm{d}t
 +C\int_0^{\sigma(T)}\sigma\|P\|_{L^4}^4\mathrm{d}t\notag\\
 &\leq C(K)+C\int_0^{\sigma(T)}\sigma\|\nabla\mathbf{u}\|_{L^4}^4\mathrm{d}t
 +C\int_0^{\sigma(T)}\sigma\|P\|_{L^4}^4\mathrm{d}t.
\end{align}
Arguing as in \eqref{3.25}, \eqref{3.26}, and \eqref{3.33}, we obtain that
\begin{align*}
&\int_0^{\sigma(T)}\sigma\|\nabla\mathbf{u}\|_{L^4}^4\mathrm{d}t
 +\int_0^{\sigma(T)}\sigma\|P\|_{L^4}^4\mathrm{d}t\notag\\
 &\leq CC_0^\frac{5\alpha+7}{7\alpha+5}
 +C(\hat{\rho})\int_0^{\sigma(T)}\sigma
 (\|\nabla\mathbf{u}\|_{L^2}+1)\|\sqrt{\rho}\dot{\mathbf{u}}\|_{L^2}^3\mathrm{d}t
+C(\hat{\rho})\int_{0}^{\sigma(T)}\sigma\big(\|\nabla\mathbf{u}\|_{L^{2}}+\|P\|_{L^{2}}\big)
\|\sqrt{\rho}\dot{\mathbf{u}}\|_{L^2}^3
\mathrm{d}t\notag\\
&\leq CC_0^\frac{5\alpha+7}{7\alpha+5}
+C(\hat{\rho})\sup_{t\in[0,\sigma(T)]}\big(\sigma^\frac12\|\nabla\mathbf{u}\|_{L^{2}}+\|P\|_{L^{2}}+1\big)
\sup_{t\in[0,\sigma(T)]}\big(\sigma^\frac12\|\sqrt{\rho}\dot{\mathbf{u}}\|_{L^2}\big)
\int_{0}^{\sigma(T)}\|\sqrt{\rho}\dot{\mathbf{u}}\|_{L^2}^2\mathrm{d}t\notag\\
&\leq CC_0^\frac{5\alpha+7}{7\alpha+5}+C(\hat{\rho})A_3(\sigma(T))
\sup_{t\in[0,\sigma(T)]}\big(\sigma\|\sqrt{\rho}\dot{\mathbf{u}}\|_{L^2}^2\big)^\frac12,
\end{align*}
which along with \eqref{3.43} and \eqref{3.36} yields \eqref{3.37}.
\end{proof}

The following spatial weighted estimate plays an important role in bounding $\|\rho\|_{L^\theta}$.

\begin{lemma}\label{l3.5}
Let $\bar{x}$ and $\alpha\in(1,2)$ be as in \eqref{1.10}.
There exists a positive constant $C$ depending only on $\alpha, \hat{\rho}, M, a,
\gamma, \mu, \lambda$, and $\eta_0$ such that, for any $0<t<T$,
\begin{equation}\label{3.44}
  \sup_{s\in[0,t]}\|\bar{x}^\alpha\rho\|_{L^1}\leq C(1+t)^4.
\end{equation}
\end{lemma}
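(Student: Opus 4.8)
The plan is to test the mass equation $\eqref{a1}_1$ against the weight $\bar{x}^\alpha$ and thereby reduce \eqref{3.44} to a scalar differential inequality for $g(t)\triangleq\int\bar{x}^\alpha\rho(\mathbf{x},t)\,\mathrm{d}\mathbf{x}$. Multiplying $\eqref{a1}_1$ by $\bar{x}^\alpha\varphi_R$, where $\varphi_R$ is a smooth spatial cut-off with $\varphi_R\equiv1$ on $\widetilde{B}_R$, $\operatorname{supp}\varphi_R\subset\widetilde{B}_{2R}$ and $|\nabla\varphi_R|\le C/R$, integrating over $\mathbb{R}^2_+$ and letting $R\to\infty$ exactly as in the derivation of \eqref{wz} --- the boundary integral over $\partial\mathbb{R}^2_+$ vanishing by the no-slip condition \eqref{a4}, and the cut-off error tending to $0$ because all the weighted integrals below are finite for the strong solution of Lemma \ref{l2.1} --- one obtains
\[
\frac{\mathrm{d}}{\mathrm{d}t}\int\bar{x}^\alpha\rho\,\mathrm{d}\mathbf{x}=\int\rho\mathbf{u}\cdot\nabla\bar{x}^\alpha\,\mathrm{d}\mathbf{x}.
\]
Since $e+|\mathbf{x}|^2\ge e$, differentiating $\bar{x}^\alpha=(e+|\mathbf{x}|^2)^{\alpha/2}\log^{2\alpha}(e+|\mathbf{x}|^2)$ and absorbing the logarithm produced by the log-factor yields the pointwise bound $|\nabla\bar{x}^\alpha|\le C\bar{x}^\alpha(e+|\mathbf{x}|^2)^{-1/2}$, hence $g'(t)\le C\int\rho|\mathbf{u}|\bar{x}^\alpha(e+|\mathbf{x}|^2)^{-1/2}\,\mathrm{d}\mathbf{x}$.

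Next I would estimate this last integral by splitting it over $\widetilde{B}_1$ and over $\mathbb{R}^2_+\setminus\widetilde{B}_1$. On $\widetilde{B}_1$ the weight $\bar{x}^\alpha(e+|\mathbf{x}|^2)^{-1/2}$ is bounded, so Cauchy--Schwarz together with Lemma \ref{l3.1} and $\|\rho\|_{L^1}=1$ give $\int_{\widetilde{B}_1}\rho|\mathbf{u}|\bar{x}^\alpha(e+|\mathbf{x}|^2)^{-1/2}\,\mathrm{d}\mathbf{x}\le C\|\sqrt{\rho}\|_{L^2}\|\sqrt{\rho}\mathbf{u}\|_{L^2}\le C$. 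For the complement I would use Cauchy--Schwarz in the form
\[
\int\rho|\mathbf{u}|\bar{x}^\alpha(e+|\mathbf{x}|^2)^{-1/2}\,\mathrm{d}\mathbf{x}\le\Big(\int\bar{x}^\alpha\rho\,\mathrm{d}\mathbf{x}\Big)^{\!1/2}\Big(\int\rho|\mathbf{u}|^2\,\bar{x}^\alpha(e+|\mathbf{x}|^2)^{-1}\,\mathrm{d}\mathbf{x}\Big)^{\!1/2},
\]
and here is the crux: because $\alpha<2$ one has $\alpha/2-1<0$, so the weight $\bar{x}^\alpha(e+|\mathbf{x}|^2)^{-1}=(e+|\mathbf{x}|^2)^{\alpha/2-1}\log^{2\alpha}(e+|\mathbf{x}|^2)$ is a bounded function on $\mathbb{R}^2_+$ (continuous and tending to $0$ at infinity); the second factor is therefore $\le C\|\sqrt{\rho}\mathbf{u}\|_{L^2}^2\le CC_0$ by Lemma \ref{l3.1}. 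Combining the two pieces gives the closed inequality $g'(t)\le C+Cg(t)^{1/2}$ with $g(0)=\|\bar{x}^\alpha\rho_0\|_{L^1}\le M$.

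It remains to integrate this scalar inequality. Putting $h=(1+g)^{1/2}$ one gets $h'=g'/(2h)\le C$, so $h(t)\le h(0)+Ct\le C(1+t)$ and hence $g(t)\le C(1+t)^2$ uniformly on $[0,T]$; in particular $\sup_{s\in[0,t]}\|\bar{x}^\alpha\rho(s)\|_{L^1}\le C(1+t)^2\le C(1+t)^4$, which is \eqref{3.44}. I expect the only genuinely delicate point to be the justification of the testing-and-limit step, which is handled exactly as for \eqref{wz} through the spatial cut-off $\varphi_R$ and the no-slip boundary condition; the remainder is a routine weighted energy estimate whose single structural requirement is $\alpha\in(1,2)$, needed precisely to make $\bar{x}^\alpha(e+|\mathbf{x}|^2)^{-1}$ bounded. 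Notably, the argument uses only the mass identity \eqref{wz} and the basic energy estimate of Lemma \ref{l3.1}, and not the a priori hypothesis \eqref{3.4}.
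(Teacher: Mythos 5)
Your proof is correct, and it takes a genuinely different and strictly simpler route than the paper's. The paper first derives a linear-weight bound $\sup_{s\in[0,t]}\int\rho(1+|\mathbf{x}|^2)^{1/2}\mathrm{d}\mathbf{x}\le C(1+t)$, uses this to establish the lower mass bound \eqref{3.47} on a growing ball, then invokes the weighted Poincar\'e-type inequality of Lemma~\ref{l2.4} (with a time-dependent radius $\eta_*=\eta_1(1+t)\log^\alpha(e+t)$, and crucially the \textit{a priori} bound $\|\rho\|_{L^\theta}\le2\hat\rho$ from \eqref{3.4}) to control $\|\bar{x}^{-\varsigma}\mathbf{u}\|_{L^{r/\varsigma}}$, and finally closes with a three-factor H\"older estimate to get $g'(t)\le C(1+t)^3$ and hence the stated $(1+t)^4$. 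Your argument bypasses all of that: the single structural observation that $\alpha<2$ makes $\bar{x}^\alpha(e+|\mathbf{x}|^2)^{-1}=(e+|\mathbf{x}|^2)^{\alpha/2-1}\log^{2\alpha}(e+|\mathbf{x}|^2)$ a bounded function lets you close with one Cauchy--Schwarz, $g'\le Cg^{1/2}\|\sqrt{\rho}\mathbf{u}\|_{L^2}\le Cg^{1/2}$, using only \eqref{wz} and Lemma~\ref{l3.1}. This is both shorter and sharper, yielding $(1+t)^2$ in place of $(1+t)^4$; the minor split over $\widetilde{B}_1$ and its complement in your writeup is unnecessary (the Cauchy--Schwarz applies directly over $\mathbb{R}^2_+$), and your $|\nabla\bar{x}^\alpha|\lesssim\bar{x}^\alpha(e+|\mathbf{x}|^2)^{-1/2}$ agrees with the paper's $|\nabla\bar{x}^\alpha|\lesssim\bar{x}^{\alpha-1}\log^2(e+|\mathbf{x}|^2)$. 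The one thing to be aware of is that the paper's longer route is not wasted work: it produces \eqref{3.47} (equivalently \eqref{1.15} of Theorem~\ref{t1.1}) as a by-product, and \eqref{3.47} is needed independently in Lemma~\ref{l3.6} when selecting $\eta_*$; your proof establishes Lemma~\ref{l3.5} itself but does not supply that side estimate, which would still have to be derived elsewhere.
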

\begin{proof}
Multiplying $\eqref{a1}_1$ by $(1+|\mathbf{x}|^2)^\frac12$ and integrating the resulting equality over $\mathbb{R}^2_+$, we obtain that
\begin{equation*}
  \frac{\mathrm{d}}{\mathrm{d}t}\int\rho(1+|\mathbf{x}|^2)^\frac12\mathrm{d}\mathbf{x}
\leq C\int\rho|\mathbf{u}|\mathrm{d}\mathbf{x}
\leq C\|\sqrt{\rho}\|_{L^2}\|\sqrt{\rho}\mathbf{u}\|_{L^2}.
\end{equation*}
Integrating the above inequality with respect to time over $(0,t)$ together with \eqref{wz} and \eqref{3.6} yields that
\begin{equation}\label{3.44*}
  \sup_{s\in[0,t]}\int\rho(1+|\mathbf{x}|^2)^\frac12\mathrm{d}\mathbf{x}\leq C(M)(1+t).
\end{equation}

For $N\geq1$, let $\psi_N\in C^\infty_0(\widetilde{B}_{2N})$ satisfy
\begin{equation}\label{3.45}
  \psi_N(\mathbf{x})=
  \begin{cases}
  1, \ \ |\mathbf{x}|\leq N, \\
  0, \ \ |\mathbf{x}|\geq 2N,
  \end{cases}
  \ \ \ 0\leq\psi_N\leq1, \ \ \ |\nabla\psi_N|\leq CN^{-1}.
\end{equation}
Set
\begin{equation*}
\mathbf{y}(t)=\delta\mathbf{x}(1+t)^{-1}\log^{-\alpha}(e+t),
\end{equation*}
with some small constant $\delta>0$ determined later. Multiplying $\eqref{a1}_1$ by $\psi_1(\mathbf{y})$ and integrating by parts, it follows from \eqref{3.44*} that
\begin{align}\label{3.46}
 \frac{\mathrm{d}}{\mathrm{d}t}\int\rho\psi_{1}(\mathbf{y})\mathrm{d}\mathbf{x}
 &=\int\rho\mathbf{y}_t\cdot\nabla_{\mathbf{y}}\psi_1\mathrm{d}\mathbf{x}
 +\delta(1+t)^{-1}\log^{-\alpha}(e+t)\int\rho\mathbf{u}\cdot\nabla_{\mathbf{y}}\psi_1\mathrm{d}\mathbf{x}\notag\\
 &\geq -\frac{C\delta}{(1+t)^{2}\log^{\alpha}(e+t)}\int\rho|\mathbf{x}|\mathrm{d}\mathbf{x}
 -\frac{C\delta}{(1+t)\log^{\alpha}(e+t)}\notag\\
&\geq -\frac{2C(M)\delta}{(1+t)\log^{\alpha}(e+t)}.
\end{align}
Thus, one deduces from \eqref{3.46} and \eqref{1.8} that
\begin{equation*}
  \int\rho\psi_{1}(\mathbf{y})\mathrm{d}\mathbf{x}
  \geq \int\rho_0\psi_{1}(\delta\mathbf{x})\mathrm{d}\mathbf{x}-C(\alpha,M)\delta
  \geq \int_{\widetilde{B}_{\eta_0}}\rho_0\mathrm{d}\mathbf{x}-C(\alpha,M)\delta
  \geq \frac{1}{4}
\end{equation*}
by choosing $\delta\triangleq\big(\eta_0+4C(\alpha,M)\big)^{-1}$. Moreover,
for $\eta_1\triangleq 2\delta^{-1}=2\eta_0+8C(\alpha,M)$, we derive that
\begin{equation}\label{3.47}
\inf_{0\leq t\leq T}\int_{\widetilde{B}_{\eta_1(1+t)\log^{\alpha}(e+t)}}\rho\mathrm{d}\mathbf{x}\geq
\inf_{0\leq t\leq T}\int\rho\psi_{1}(\mathbf{y})\mathrm{d}\mathbf{x}\geq \frac{1}{4},
\end{equation}
as the desired \eqref{1.15}.

Now we select the radius $\eta_*=\eta_1(1+t)\log^{\alpha}(e+t)$ in Lemma \ref{l2.4}, which along with Lemma \ref{l2.3}, \eqref{2.2}, \eqref{wz}, \eqref{3.4}, and \eqref{3.47} implies that
\begin{equation}\label{3.48}
  \sup_{s\in[0,t]}\|\bar{x}^{-\varsigma}\mathbf{u}\|_{L^{r/\varsigma}}\leq
  C\sup_{s\in[0,t]}\eta_*^2(1+\|\rho\|_{L^2})\big(\|\sqrt{\rho}\mathbf{u}\|_{L^2}+\|\nabla\mathbf{u}\|_{L^2}\big)
  \leq C(\hat{\rho},M)(1+t)^{3-\frac{\alpha+1}{3}}
\end{equation}
for $\varsigma\in(0,1]$, $\alpha\in(1,2)$, and $r>2$.
Then, multiplying $\eqref{a1}_1$ by $\bar{x}^\alpha$ and integrating the resultant over $\mathbb{R}^2_+$, we get from H{\"o}lder's inequality, \eqref{3.44*}, and \eqref{3.48} that
\begin{align*}
  \frac{\mathrm{d}}{\mathrm{d}t}\int\bar{x}^\alpha\rho\mathrm{d}\mathbf{x}
  &\leq C\int\rho|\mathbf{u}|\bar{x}^{\alpha-1}\log^2(e+|\mathbf{x}|^2)\mathrm{d}\mathbf{x}\notag\\
  &\leq C\big\|\big(\rho(1+|\mathbf{x}|^2)^\frac12\big)^\frac{\alpha+1}{3}\big\|_{L^\frac{3}{\alpha+1}}
  \big\|\bar{x}^{-\frac{2-\alpha}{2}}\mathbf{u}\big\|_{L^{\frac{6}{2-\alpha}}}
  \big\|\rho^{\frac{2-\alpha}{3}}\big\|_{L^{\frac{6}{2-\alpha}}}\notag\\
  &\quad\times\sup_{\mathbf{x}\in\mathbb{R}^2_+}
  \Big[(1+|\mathbf{x}|^2)^{\frac{\alpha-2}{12}}\log^{\alpha+2}(e+|\mathbf{x}|^2)\Big]
  \notag\\
  &\leq C(1+t)^3,
\end{align*}
which yields \eqref{3.44} after integrating the above inequality over $(0,t)$.
\end{proof}

Finally, by Zlotnik's inequality, one can establish the desired uniform-in-time $L^\theta$ bound for the density.

\begin{lemma}\label{l3.6}
Under the assumptions \eqref{1.11} and \eqref{3.4}, there exists a positive constant $\varepsilon_4$ depending only on $\alpha, \hat{\rho}, M, a, \gamma, \mu, \lambda$, and $\eta_0$ such that
\begin{align}\label{3.49}
\sup_{t\in[0,T]}\|\rho\|_{L^\theta}\leq\frac{7}{4}\hat{\rho}
\end{align}
provided $C_0\leq \varepsilon_4$.
\end{lemma}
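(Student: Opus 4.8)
The plan is to close an $L^{\theta}$ energy estimate for the density by means of Zlotnik's inequality (Lemma~\ref{lzlo}), with the effective viscous flux decomposition of Lemma~\ref{l2.6} supplying the dissipation needed to beat the pressure nonlinearity. First I would take $r=\theta$ in \eqref{1.16} and insert $\divv\mathbf{u}=\frac{1}{2\mu+\lambda}(F+P)$ together with $F=\widetilde{F}+\frac{2\mu}{3\mu+\lambda}\mathcal{Q}_3(P)$ and $\mathcal{Q}_3(P)=\divv\mathbb{G}_0(\nabla P)-P$; using $P=a\rho^{\gamma}$, this yields
\begin{equation*}
\frac{\mathrm{d}}{\mathrm{d}t}\int\rho^{\theta}\mathrm{d}\mathbf{x}+\frac{a(\theta-1)}{2\mu+\lambda}\int\rho^{\theta+\gamma}\mathrm{d}\mathbf{x}=-\frac{\theta-1}{2\mu+\lambda}\int\rho^{\theta}\widetilde{F}\,\mathrm{d}\mathbf{x}-\frac{2\mu(\theta-1)}{(2\mu+\lambda)(3\mu+\lambda)}\int\rho^{\theta}\mathcal{Q}_3(P)\,\mathrm{d}\mathbf{x}.
\end{equation*}
The $\mathcal{Q}_3(P)$ term is comparable to the dissipation itself, so I would estimate it by H\"older's inequality with the conjugate pair $\big(\frac{\theta+\gamma}{\theta},\frac{\theta+\gamma}{\gamma}\big)$, the elliptic bound \eqref{2.16}, and $P=a\rho^{\gamma}$,
\begin{equation*}
\Big|\int\rho^{\theta}\mathcal{Q}_3(P)\,\mathrm{d}\mathbf{x}\Big|\leq\|\rho\|_{L^{\theta+\gamma}}^{\theta}\,\Lambda\!\Big(\tfrac{\theta+\gamma}{\gamma}\Big)\|P\|_{L^{(\theta+\gamma)/\gamma}}=a\,\Lambda\!\Big(\tfrac{25\alpha+11}{\alpha-1}\Big)\|\rho\|_{L^{\theta+\gamma}}^{\theta+\gamma},
\end{equation*}
using $\frac{\theta+\gamma}{\gamma}=\frac{25\alpha+11}{\alpha-1}$ from \eqref{1.10}; the viscosity restriction \eqref{1.11} is tailored so that $\frac{2\mu}{3\mu+\lambda}\Lambda\big(\tfrac{25\alpha+11}{\alpha-1}\big)$ is strictly below $1$, which lets this term be absorbed into the dissipation, leaving a positive multiple $c_{0}\int\rho^{\theta+\gamma}\mathrm{d}\mathbf{x}$ on the left-hand side.

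The delicate step is the remaining term $\int\rho^{\theta}\widetilde{F}\,\mathrm{d}\mathbf{x}$; I would split it against the dissipation in the same way,
\begin{equation*}
\Big|\int\rho^{\theta}\widetilde{F}\,\mathrm{d}\mathbf{x}\Big|\leq\|\rho\|_{L^{\theta+\gamma}}^{\theta}\|\widetilde{F}\|_{L^{(\theta+\gamma)/\gamma}}\leq\tfrac{c_{0}}{2}\|\rho\|_{L^{\theta+\gamma}}^{\theta+\gamma}+C\|\widetilde{F}\|_{L^{(\theta+\gamma)/\gamma}}^{(\theta+\gamma)/\gamma},
\end{equation*}
so that everything reduces to a small, time-uniform bound for $\|\widetilde{F}\|_{L^{(\theta+\gamma)/\gamma}}$. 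Writing $p\triangleq\frac{2(\theta+\gamma)}{\theta+3\gamma}\in(1,2)$, whose two-dimensional Sobolev exponent is exactly $\frac{2p}{2-p}=\frac{\theta+\gamma}{\gamma}$, I would combine the Sobolev inequality $\|\widetilde{F}\|_{L^{(\theta+\gamma)/\gamma}}\leq C\|\nabla\widetilde{F}\|_{L^{p}}$ (legitimate since $\widetilde{F}$ vanishes at spatial infinity, so an even extension lies in $\dot W^{1,p}(\mathbb{R}^2)$), the elliptic bound \eqref{2.15}, and the two-dimensional inequality $\|\rho\dot{\mathbf{u}}\|_{L^{p}}\leq\|\sqrt{\rho}\|_{L^{2p/(2-p)}}\|\sqrt{\rho}\dot{\mathbf{u}}\|_{L^{2}}$ to get
\begin{equation*}
\|\widetilde{F}\|_{L^{(\theta+\gamma)/\gamma}}\leq C\|\nabla\widetilde{F}\|_{L^{p}}\leq C\|\rho\dot{\mathbf{u}}\|_{L^{p}}\leq C\|\sqrt{\rho}\|_{L^{(\theta+\gamma)/\gamma}}\|\sqrt{\rho}\dot{\mathbf{u}}\|_{L^{2}}\leq C(\hat{\rho})\|\sqrt{\rho}\dot{\mathbf{u}}\|_{L^{2}},
\end{equation*}
where $\|\sqrt{\rho}\|_{L^{(\theta+\gamma)/\gamma}}^{2}=\|\rho\|_{L^{(\theta+\gamma)/(2\gamma)}}$ is controlled by $\|\rho\|_{L^{\theta}}\leq2\hat{\rho}$ and the conserved mass $\|\rho\|_{L^{1}}=1$ because $1\leq\frac{\theta+\gamma}{2\gamma}\leq\theta$. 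This bound carries no growing factor in $t$: on $[\sigma(T),T]$, where $\|\sqrt{\rho}\dot{\mathbf{u}}\|_{L^{2}}^{2}\leq A_{2}(T)$, it reads $\|\widetilde{F}\|_{L^{(\theta+\gamma)/\gamma}}\leq C(\hat{\rho})C_{0}^{\frac{5\alpha+4}{2(23\alpha+13)}}$. On the short interval $[0,\sigma(T)]$, where only $\sigma^{3/2}\|\sqrt{\rho}\dot{\mathbf{u}}\|_{L^{2}}$ is bounded, I would instead run the $L^{\theta}$ estimate directly, using the short-time bounds of Lemma~\ref{l3.4} and the $\sigma$-weights built into $A_{1}(T)$, $A_{2}(T)$ to absorb the initial-time singularities, and conclude that $\|\rho\|_{L^{\theta}}$ stays within $\hat{\rho}(1+CC_{0}^{\kappa})$ there for some $\kappa>0$. (The complementary estimate $\|\widetilde{F}\|_{L^{\infty}}\lesssim\|\widetilde{F}\|_{L^{60}}^{15/16}\|\nabla\widetilde{F}\|_{L^{4}}^{1/16}\lesssim(1+t)^{1/4}$, drawn from Lemma~\ref{l2.2} together with Lemma~\ref{l2.4} and the weighted density estimate of Lemma~\ref{l3.5}, is what is invoked at other stages of the argument.)

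Finally I would assemble the differential inequality for $y(t)\triangleq\int\rho^{\theta}\mathrm{d}\mathbf{x}$. Since $\|\rho\|_{L^{1}}=1$ is conserved, interpolation gives $\|\rho\|_{L^{\theta+\gamma}}^{\theta+\gamma}\geq y^{q_{*}}$ with $q_{*}\triangleq\frac{\theta+\gamma-1}{\theta-1}>1$, so on $[\sigma(T),T]$
\begin{equation*}
y'(t)\leq g(y)+b'(t),\qquad g(y)\triangleq-c_{0}\,y^{q_{*}},\qquad b'(t)\triangleq C(\hat{\rho})\|\sqrt{\rho}\dot{\mathbf{u}}\|_{L^{2}}^{(\theta+\gamma)/\gamma}.
\end{equation*}
Here $g(\infty)=-\infty$, and $b(t_{2})-b(t_{1})\leq N_{0}+N_{1}(t_{2}-t_{1})$ holds with $N_{0}=0$ and $N_{1}=C(\hat{\rho})A_{2}(T)^{(\theta+\gamma)/(2\gamma)}$, which is bounded by a positive power of $C_{0}$ and hence small; Zlotnik's inequality (Lemma~\ref{lzlo}) then yields $y(t)\leq\max\{y(\sigma(T)),\xi^{*}\}$ with $g(\xi^{*})=-N_{1}$, forcing $\xi^{*}$ to be small. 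Combining this with the short-time bound, the hypothesis $\|\rho_{0}\|_{L^{\theta}}\leq\hat{\rho}$ from \eqref{1.9}, and the choice of $\varepsilon_{4}$ small enough, one obtains $\sup_{[0,T]}\|\rho\|_{L^{\theta}}\leq\hat{\rho}(1+CC_{0}^{\kappa})\leq\frac{7}{4}\hat{\rho}$, which is \eqref{3.49}. The main obstacle is precisely the middle step — securing, under only the a priori information $\|\rho\|_{L^{\theta}}\leq2\hat{\rho}$, a small and time-growth-free bound for $\|\widetilde{F}\|_{L^{(\theta+\gamma)/\gamma}}$, for which the boundary-term-free decomposition of Lemma~\ref{l2.6}, the Green-function elliptic estimates of Lemma~\ref{l2.7}, and the sub-critical density--velocity inequality above must all cooperate — together with the careful treatment near $t=0$, where the $\sigma^{-3/2}$ singularity of $\|\sqrt{\rho}\dot{\mathbf{u}}\|_{L^{2}}$ has to be absorbed by the weights carried by $A_{1}(T)$ and $A_{2}(T)$.
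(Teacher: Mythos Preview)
Your long-time argument on $[\sigma(T),T]$ is actually cleaner than the paper's: bounding $\|\widetilde F\|_{L^{(\theta+\gamma)/\gamma}}\le C(\hat\rho)\|\sqrt{\rho}\dot{\mathbf u}\|_{L^2}$ via Sobolev and the subcritical H\"older split gives a constant-in-time $N_1\le C(\hat\rho)A_2(T)^{(\theta+\gamma)/(2\gamma)}$, which feeds directly into Zlotnik without invoking Lemma~\ref{l2.4} or the weighted density bound of Lemma~\ref{l3.5}. The paper instead routes everything through $\|\widetilde F\|_{L^\infty}$ and needs the growth estimate $\|\widetilde F\|_{L^\infty}\lesssim(1+t)^{1/4}$ there.

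The genuine gap is the short-time interval $[0,\sigma(T)]$, which you treat only in outline. Your Young-inequality splitting of $\int\rho^\theta\widetilde F\,\mathrm{d}\mathbf x$ produces the source term $C\|\widetilde F\|_{L^{(\theta+\gamma)/\gamma}}^{(\theta+\gamma)/\gamma}\le C(\hat\rho)\|\sqrt{\rho}\dot{\mathbf u}\|_{L^2}^{(\theta+\gamma)/\gamma}$, and since $\theta>60\gamma$ the exponent $(\theta+\gamma)/\gamma$ exceeds $61$. The best pointwise information available near $t=0$ is $\sigma^{1/2}\|\sqrt{\rho}\dot{\mathbf u}\|_{L^2}\le C(K)$ from \eqref{3.37} (not $\sigma^{3/2}$ as you write), so your source behaves like $t^{-(\theta+\gamma)/(2\gamma)}$, which is far from integrable; the $L^2_t$ bound in $A_3(\sigma(T))$ cannot rescue a power this large either. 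No choice of $\sigma$-weights in $A_1,A_2$ can absorb a singularity of order $>30$, so ``running the $L^\theta$ estimate directly'' does not close.

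The paper avoids this precisely by \emph{not} using Young here: it bounds $\int\rho^\theta\widetilde F\,\mathrm{d}\mathbf x\le y(t)\|\widetilde F\|_{L^\infty}$, so $\widetilde F$ enters only to the first power, and sets $b'(t)=\tfrac{\theta-1}{2\mu+\lambda}y(t)\|\widetilde F\|_{L^\infty}$. It then shows $\int_0^{\sigma(T)}\|\widetilde F\|_{L^\infty}\,\mathrm dt\le CC_0^{\kappa}$ via Gagliardo--Nirenberg $\|\widetilde F\|_{L^\infty}\le C\|\widetilde F\|_{L^2}^{1/3}\|\nabla\widetilde F\|_{L^4}^{2/3}$, Lemma~\ref{l2.4} for $\|\rho\dot{\mathbf u}\|_{L^4}$, and the short-time bounds \eqref{3.36}--\eqref{3.37}; the resulting time singularity is at worst $\sigma^{-3/4}$, which is integrable (see \eqref{3.54}). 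This is exactly the $L^\infty$ route you mention parenthetically as ``invoked at other stages'' --- in fact it is indispensable \emph{here}, and your $L^{(\theta+\gamma)/\gamma}$ substitute cannot replace it on the initial layer.
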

\begin{proof}
From $\eqref{a1}_1$ and \eqref{1.6}, we have that
\begin{equation}\label{3.50}
\frac{\mathrm{d}}{\mathrm{d}t}\int\rho^\theta\mathrm{d}\mathbf{x}
=-(\theta-1)\int\rho^\theta\divv\mathbf{u}\mathrm{d}\mathbf{x}=
-\frac{\theta-1}{2\mu+\lambda}\int\rho^\theta(P+F)\mathrm{d}\mathbf{x},
\end{equation}
which combined with \eqref{3.32} and Lemma \ref{l2.7} leads to
\begin{align*}
\frac{\mathrm{d}}{\mathrm{d}t}\|\rho\|_{L^\theta}^\theta
+\frac{a(\theta-1)}{2\mu+\lambda}\|\rho\|_{L^{\theta+\gamma}}^{\theta+\gamma}
&\leq
\frac{\theta-1}{2\mu+\lambda}\int\rho^\theta|\widetilde{F}|\mathrm{d}\mathbf{x}
+\frac{2\mu(\theta-1)}{(3\mu+\lambda)(2\mu+\lambda)}\int\rho^\theta\big|\mathcal{Q}_3(P)\big|\mathrm{d}\mathbf{x}\notag\\
&\leq
\frac{\theta-1}{2\mu+\lambda}\|\rho\|_{L^\theta}^\theta\|\widetilde{F}\|_{L^\infty}
+\frac{2\mu(\theta-1)}{(3\mu+\lambda)(2\mu+\lambda)}\|\rho^\theta\|_{L^{\frac{\theta+\gamma}{\theta}}}\|\mathcal{Q}_3(P)\|_{L^{\frac{\theta+\gamma}{\gamma}}}\notag\\
&\leq\frac{\theta-1}{2\mu+\lambda}\|\rho\|_{L^\theta}^\theta\|\widetilde{F}\|_{L^\infty}
+\frac{2a\mu(\theta-1)}{(3\mu+\lambda)(2\mu+\lambda)}
\Lambda\left(\frac{9\alpha+3}{\alpha-1}\right)\|\rho\|_{L^{\theta+\gamma}}^{\theta+\gamma}.
\end{align*}
Then it follows from \eqref{1.11} that
\begin{equation}\label{3.51}
  \frac{\mathrm{d}}{\mathrm{d}t}\|\rho\|_{L^\theta}^\theta\leq
-\frac{a(\theta-1)}{3(2\mu+\lambda)}\|\rho\|_{L^{\theta+\gamma}}^{\theta+\gamma}
+\frac{\theta-1}{2\mu+\lambda}\|\rho\|_{L^\theta}^\theta\|\widetilde{F}\|_{L^\infty}.
\end{equation}
Noting that
\begin{equation*}
\|\rho\|_{L^\theta}^\theta\leq \|\rho\|_{L^\gamma}^{\frac{\gamma^2}{\theta}}
\|\rho\|_{L^{\theta+\gamma}}^{\frac{\theta^2-\gamma^2}{\theta}}\leq \big(a^{-1}(\gamma-1)C_0\big)^\frac{\gamma}{\theta}\|\rho\|_{L^{\theta+\gamma}}^{\frac{\theta^2-\gamma^2}{\theta}}
\leq a^{-\frac{\gamma}{\theta}}(\gamma-1)^\frac{\gamma}{\theta}\|\rho\|_{L^{\theta+\gamma}}^{\frac{\theta^2-\gamma^2}{\theta}}
\end{equation*}
provided $C_0\leq1$, we infer from \eqref{3.51} that
\begin{equation}\label{3.51*}
   \frac{\mathrm{d}}{\mathrm{d}t}\|\rho\|_{L^\theta}^\theta\leq
   -\frac{a\tilde{c}(\theta-1)}{3(2\mu+\lambda)}
   \big(\|\rho\|_{L^{\theta}}^{\theta}\big)^\frac{\theta}{\theta-\gamma}
   +\frac{\theta-1}{2\mu+\lambda}\|\rho\|_{L^\theta}^\theta\|\widetilde{F}\|_{L^\infty},
\end{equation}
where $\tilde{c}=\tilde{c}(\alpha,a,\gamma)=a^{-\frac{\theta^2}{\gamma(\theta-\gamma)}}
(\gamma-1)^{\frac{\theta^2}{\gamma(\theta-\gamma)}}$.

Set
\begin{align*}
 y(t)=\|\rho\|_{L^\theta}^\theta,\ \
  f(y)=-\frac{a\widetilde{c}(\theta-1)}{3(2\mu+\lambda)}y^\frac{\theta}{\theta-\gamma}(t),\ \
  b(t)=\int_0^t\frac{\theta-1}{2\mu+\lambda}
  \|\widetilde{F}\|_{L^\infty}y(\tau)\mathrm{d}\tau.
\end{align*}
One then deduces from \eqref{3.51*} that
\begin{equation}\label{3.52}
y'(t)\leq f(y)+b'(t).
\end{equation}
Using Lemma \ref{l2.4} with $\eta_*=\eta_1(1+t)\log^{\alpha}(e+t)$, it follows from \eqref{2.1}, \eqref{3.10}, \eqref{3.44}, and \eqref{3.47} that
\begin{align}\label{3.53}
  \|\rho\dot{\mathbf{u}}\|_{L^4(\mathbb{R}^2_+)}
   \leq
  C(\hat{\rho},M)(1+t)^4\big(\|\sqrt{\rho}\dot{\mathbf{u}}\|_{L^2(\mathbb{R}^2_+)}
  +\|\nabla\dot{\mathbf{u}}\|_{L^2(\mathbb{R}^2_+)} \big).
\end{align}
This along with \eqref{3.34}, \eqref{3.37}, Lemma \ref{l2.2}, and Lemma \ref{l2.7} yields that, for $0\leq t_1<t_2\leq \sigma(T)$,
\begin{align}\label{3.54}
|b(t_2)-b(t_1)|&\leq C
 \int_0^{\sigma(T)}\|\widetilde{F}\|_{L^\infty}\mathrm{d}t\notag\\
 &\leq C\int_0^{\sigma(T)}\|\widetilde{F}\|_{L^2}^{\frac13}\|\nabla \widetilde{F}\|_{L^4}^{\frac23}\mathrm{d}t\notag\\
 &\leq C\int_0^{\sigma(T)}\big(\|\nabla\mathbf{u}\|_{L^{2}}+\|P\|_{L^{2}}\big)^\frac13
 \|\rho\dot{\mathbf{u}}\|_{L^4}^\frac23\mathrm{d}t\notag\\
 &\leq C\int_0^{\sigma(T)}\big(\|\nabla\mathbf{u}\|_{L^{2}}+\|P\|_{L^{2}}\big)^\frac13
 \Big(\|\sqrt{\rho}\dot{\mathbf{u}}\|_{L^2}^\frac23+\|\nabla\dot{\mathbf{u}}\|_{L^2}^\frac23 \Big)\mathrm{d}t\notag\\
  &\leq
   C\sup_{t\in[0,\sigma(T)]}
   \Big(\sigma^\frac16\|\nabla\mathbf{u}\|_{L^{2}}^\frac13\Big)
  \Bigg[\bigg(\int_{0}^{\sigma(T)}\|\sqrt{\rho}\dot{\mathbf{u}}\|_{L^{2}}^{2}\mathrm{d}t\bigg)^{\frac{1}{3}}\bigg(\int_{0}^{\sigma(T)}\sigma^{-\frac14}\mathrm{d}t\bigg)^{\frac{2}{3}}
\notag\\&\quad+\bigg(\int_0^{\sigma(T)}\sigma\|\nabla\dot{\mathbf{u}}\|_{L^2}^2
\mathrm{d}t\bigg)^{\frac13}\bigg(\int_0^{\sigma(T)}
\sigma^{-\frac34}\mathrm{d}t\bigg)^{\frac23}\Bigg]\notag\\
  &\quad+C\sup_{t\in[0,\sigma(T)]}\|P\|_{L^{2}}^\frac13
  \Bigg[\bigg(\int_{0}^{\sigma(T)}\|\sqrt{\rho}\dot{\mathbf{u}}\|_{L^{2}}^{2}\mathrm{d}t\bigg)^{\frac{1}{3}}\bigg(\int_{0}^{\sigma(T)}1\mathrm{d}t\bigg)^{\frac{2}{3}}
\notag\\&\quad+\bigg(\int_0^{\sigma(T)}\sigma\|\nabla\dot{\mathbf{u}}\|_{L^2}^2
\mathrm{d}t\bigg)^{\frac13}\bigg(\int_0^{\sigma(T)}
\sigma^{-\frac12}\mathrm{d}t\bigg)^{\frac23}\Bigg]\notag\\
&\leq C_1(\hat{\rho},M,K)C_0^\frac{\alpha+2}{6(7\alpha+5)}.
\end{align}
We now apply Lemma \ref{lzlo} with
\begin{equation*}
  N_1=0, \ \
  N_0=C_1(\hat{\rho},M,K)C_0^\frac{\alpha+2}{6(7\alpha+5)}, \ \
  \xi^*=1,
\end{equation*}
where one sees that
\begin{equation*}
  f(\xi)=-\frac{a\tilde{c}(\theta-1)}{3(2\mu+\lambda)}\xi^\frac{\theta}{\theta-\gamma}\leq-N_1=0,
  \ \ \ \text{for all} \ \xi\geq \xi^*=1.
\end{equation*}
Therefore, we have that
\begin{equation}\label{3.55}
  y(t)\leq \max \big\{\hat{\rho}^\theta, 1\big\} + N_{0}\leq\hat{\rho}^\theta+C_1(\hat{\rho},M,K)C_0^\frac{\alpha+2}{6(7\alpha+5)}
  \leq \Big(\frac{3}{2}\hat{\rho}\Big)^\theta
\end{equation}
provided
\begin{equation*}
  C_0\leq \varepsilon_{4,1}\triangleq
  \min\left\{\varepsilon_3,
  \bigg(\frac{\hat{\rho}^\theta}{C_1(\hat{\rho},M,K)}\bigg)
  ^\frac{6(7\alpha+5)}{\alpha+2}\right\}.
\end{equation*}

For $\sigma(T)\leq t_1<t_2\leq T$, we first derive large-time weighted estimates.
Noting that
\begin{equation*}
\|\widetilde{F}\|_{L^{4\gamma}}\leq C\|\nabla\widetilde{F}\|_{L^{\frac{4\gamma}{2\gamma+1}}} \leq
C\|\rho\dot{\mathbf{u}}\|_{L^{\frac{4\gamma}{2\gamma+1}}}\leq
  C\|\sqrt{\rho}\|_{L^{4\gamma}}\|\sqrt{\rho}\dot{\mathbf{u}}\|_{L^2},
\end{equation*}
we deduce that
\begin{align}\label{ef2}
 \int P^2\mathrm{d}\mathbf{x}
 &=\int P\big(-F+(2\mu+\lambda)\divv\mathbf{u}\big)\mathrm{d}\mathbf{x}
 \notag\\
 &\leq
 \frac{2\mu\Lambda(2)}{3\mu+\lambda}\|P\|_{L^2}^2
 +\|P\|_{L^\frac{4\gamma}{4\gamma-1}}\|\widetilde{F}\|_{L^{4\gamma}}
 +C\|P\|_{L^2}\|\nabla\mathbf{u}\|_{L^2}
 \notag\\
 &\leq
 \frac13\|P\|_{L^2}^2
 +C\|\rho^\frac12\|_{L^2}\|\rho^{\gamma-\frac12}\|_{L^\frac{4\gamma}{2\gamma-1}}
 \|\sqrt{\rho}\|_{L^{4\gamma}}\|\sqrt{\rho}\dot{\mathbf{u}}\|_{L^2}
 +C\|P\|_{L^2}\|\nabla\mathbf{u}\|_{L^2}
 \notag\\
 &\leq
  \frac12\|P\|_{L^2}^2
  +C\big(\|\sqrt{\rho}\dot{\mathbf{u}}\|_{L^2}^2+\|\nabla\mathbf{u}\|_{L^2}^2\big).
\end{align}
Testing \eqref{3.7} with $2t(\gamma-1)P$ yields
\begin{align}\label{ef3}
 &\frac{\mathrm{d}}{\mathrm{d}t}\big(t\|P\|_{L^2}^2\big)
 +\frac{2\gamma-1}{2\mu+\lambda}t\|P\|_{L^3}^3
 \notag\\
 &\leq
 C\|P\|_{L^2}^2
 -\frac{2\gamma-1}{2\mu+\lambda}\int tFP^2\mathrm{d}\mathbf{x}
 \notag\\
 &\leq
 C\|P\|_{L^2}^2
 +\frac{2(2\gamma-1)}{3(2\mu+\lambda)}t\|P\|_{L^3}^3
+\frac{2\gamma-1}{3(2\mu+\lambda)}\bigg(4t\|\widetilde{F}\|_{L^3}^3+
4t\left(\frac{2\mu\Lambda(3)}{3\mu+\lambda}\right)^3\|P\|_{L^3}^3\bigg)
\notag\\
&\leq
C\big(\|\sqrt{\rho}\dot{\mathbf{u}}\|_{L^2}^2+\|\nabla\mathbf{u}\|_{L^2}^2\big)
+\frac{3(2\gamma-1)}{4(2\mu+\lambda)}t\|P\|_{L^3}^3
+\frac{2\gamma-1}{16\widehat{C}(2\mu+\lambda)}t\|\sqrt{\rho}\dot{\mathbf{u}}\|_{L^2}^2
   +Ct\|\nabla\mathbf{u}\|_{L^2}^6
   \notag\\
   &\quad
   +Ct\|P\|_{L^2}^2\big(\|\sqrt{\rho}\dot{\mathbf{u}}\|_{L^2}^2
   +\|\nabla\mathbf{u}\|_{L^2}^2\big),
\end{align}
where in the last inequality we have used \eqref{ef2}, \eqref{1.11}, and the following estimate (obtained as in \eqref{3.33}):
\begin{align*}
  \frac{4(2\gamma-1)}{3(2\mu+\lambda)}\|\widetilde{F}\|_{L^3}^3
  &\leq C \|\widetilde{F}\|_{L^2}\|\widetilde{F}\|_{L^4}^2\\
  &\leq C\big(\|\nabla\mathbf{u}\|_{L^2}+\|P\|_{L^2}\big)^\frac32
\|\sqrt{\rho}\|_{L^6}^\frac32\|\sqrt{\rho}\dot{\mathbf{u}}\|_{L^2}^\frac32
  \notag\\
  &\leq
 \frac{2\gamma-1}{16\widehat{C}(2\mu+\lambda)}\|\sqrt{\rho}\dot{\mathbf{u}}\|_{L^2}^2
  +C\big(\|\nabla\mathbf{u}\|_{L^2}+\|P\|_{L^2}\big)^6
  \notag\\
  &\leq
   \frac{2\gamma-1}{16\widehat{C}(2\mu+\lambda)}\|\sqrt{\rho}\dot{\mathbf{u}}\|_{L^2}^2
   +C\|\nabla\mathbf{u}\|_{L^2}^6
   +C\|P\|_{L^2}^2\big(\|\sqrt{\rho}\dot{\mathbf{u}}\|_{L^2}^2
   +\|\nabla\mathbf{u}\|_{L^2}^2\big).
\end{align*}
Proceeding as in the derivation of \eqref{3.15}, with
$\sigma$ replaced by $t$, and then adding
$\frac{4\widehat{C}(2\mu+\lambda)}{2\gamma-1}\times$\eqref{ef3}, we obtain
\begin{align}\label{ef4}
  &\frac{\mathrm{d}}{\mathrm{d}t}
  \bigg(t B(t)+\frac{4\widehat{C}(2\mu+\lambda)}{2\gamma-1}t\|P\|_{L^2}^2\bigg)+
  \frac12t\Big(\|\sqrt{\rho}\dot{\mathbf{u}}\|_{L^2}^2+\widehat{C}\|P\|_{L^3}^3\Big)
  \notag\\
&\leq
C\big(\|\sqrt{\rho}\dot{\mathbf{u}}\|_{L^2}^2+\|\nabla\mathbf{u}\|_{L^2}^2\big)
   +Ct\|\nabla\mathbf{u}\|_{L^2}^4
   +Ct\|P\|_{L^2}^2\big(\|\sqrt{\rho}\dot{\mathbf{u}}\|_{L^2}^2+\|\nabla\mathbf{u}\|_{L^2}^2\big),
   \quad \text{for } t\in (\sigma(T),T),
\end{align}
owing to
\begin{align*}
Ct\|\nabla\mathbf{u}\|_{L^3}^3+C\int t P|\nabla\mathbf{u}|^2\mathrm{d}\mathbf{x}
&\leq Ct\big(\|\nabla\mathbf{u}\|_{L^3}^3+\|P\|_{L^3}^3\big)
\notag\\
&\leq
Ct\big(\|\sqrt{\rho}\|_{L^3}^3\|\sqrt{\rho}\dot{\mathbf{u}}\|_{L^2}^3+\|P\|_{L^3}^3\big)
\notag\\
&\leq
Ct\Big(A_2^\frac12(T)\|\sqrt{\rho}\|_{L^3}^3\|\sqrt{\rho}\dot{\mathbf{u}}\|_{L^2}^2+\|P\|_{L^3}^3\Big)
\notag\\
&\leq
\frac14t\|\sqrt{\rho}\dot{\mathbf{u}}\|_{L^2}^2
 +\frac{\widehat{C}}{2}t\|P\|_{L^3}^3, \quad \text{for } t\in (\sigma(T),T).
\end{align*}
Applying Gr\"onwall's inequality to \eqref{ef4} over $(\sigma(T),T)$, it follows from \eqref{3.16} and \eqref{3.27} that
\begin{align}\label{ef5}
  \sup_{t\in[\sigma(T),T]}\big(t\|\nabla \mathbf{u}\|_{L^2}^2+t\|P\|_{L^2}^2\big)
  +\int_{\sigma(T)}^Tt\big(\|\sqrt{\rho} \dot{\mathbf{u}}\|_{L^2}^2
  +\|P\|_{L^3}^3\big)\mathrm{d}t\leq C.
\end{align}
Following the same arguments as in \eqref{ef3} and \eqref{3.33}, testing \eqref{3.7} with $3t^\frac32(\gamma-1)P^2$ and then integrating over $(\sigma(T),T)$ leads to
\begin{align*}
&\sup_{t\in[\sigma(T),T]}\big(t^\frac32\|P\|_{L^3}^3\big)
  +\int_{\sigma(T)}^Tt^\frac32\|P\|_{L^4}^4\mathrm{d}t \\
  &\leq
  C\int_{\sigma(T)}^Tt^\frac12\|P\|_{L^3}^3\mathrm{d}t
  +C\int_{\sigma(T)}^Tt^\frac32\|\widetilde{F}\|_{L^4}^4\mathrm{d}t
  \notag\\
  &\leq
  C+CA_2^\frac12(T)\sup_{t\in[\sigma(T),T]}\Big(t^\frac12\|\nabla\mathbf{u}\|_{L^{2}}
+t^\frac12\|P\|_{L^{2}}\Big)
  \int_{\sigma(T)}^Tt\|\sqrt{\rho} \dot{\mathbf{u}}\|_{L^2}^2\mathrm{d}t
  \leq C,
\end{align*}
which combined with the estimates as in \eqref{3.24}--\eqref{3.26} (with $\sigma^3$ replaced by $t^\frac32$) shows that
\begin{align}\label{ef6}
&\sup_{t\in[\sigma(T),T]}\big(t^\frac32\|\sqrt{\rho}\dot{\mathbf{u}}\|_{L^2}^2\big)
 +\int_{\sigma(T)}^Tt^\frac32\|\nabla\dot{\mathbf{u}}\|_{L^2}^2\mathrm{d}t
 \notag\\
 &\leq
 C
 +CA_2^\frac12(T)\sup_{t\in[\sigma(T),T]}\Big(t^\frac12\|\nabla\mathbf{u}\|_{L^{2}}
+t^\frac12\|P\|_{L^{2}}\Big)
  \int_{\sigma(T)}^Tt\|\sqrt{\rho} \dot{\mathbf{u}}\|_{L^2}^2\mathrm{d}t
  +C\int_{\sigma(T)}^Tt^\frac32\|P\|_{L^4}^4\mathrm{d}t
 \leq C.
\end{align}
Consequently, we deduce from \eqref{ef6}, \eqref{3.53}, Lemma \ref{l2.8}, Young's inequality, and \eqref{3.27} that
\begin{align}\label{3.57}
&|b(t_2)-b(t_1)|\notag\\
&\leq C
 \int_{t_1}^{t_2}\|\widetilde{F}\|_{L^\infty}\mathrm{d}t\notag\\
  &\leq C\int_{t_1}^{t_2}\|\widetilde{F}\|_{L^{\frac{52}{3}}}^{\frac{13}{16}}\|\nabla \widetilde{F}\|_{L^4}^{\frac{3}{16}}\mathrm{d}t\notag\\
   &\leq C\int_{t_1}^{t_2}\|\nabla\widetilde{F}\|_{L^{\frac{52}{29}}}^{\frac{13}{16}}\|\nabla \widetilde{F}\|_{L^4}^{\frac{3}{16}}\mathrm{d}t
   \notag\\
  &\leq C\int_{t_1}^{t_2}\|\sqrt{\rho}\|_{L^{\frac{52}{3}}}^{\frac{13}{16}}
  \|\sqrt{\rho}\dot{\mathbf{u}}\|_{L^2}^{\frac{13}{16}}(1+t)^\frac{3}{4}
  \Big(\|\sqrt{\rho}\dot{\mathbf{u}}\|_{L^2}^\frac{3}{16}+\|\nabla\dot{\mathbf{u}}\|_{L^2}^\frac{3}{16}\Big)\mathrm{d}t
  \notag\\
 &\leq
C\sup_{t\in[\sigma(T),T]}\big(\sigma^3\|\sqrt{\rho}\dot{\mathbf{u}}\|_{L^2}^2\big)
  \int_{\sigma(T)}^T\sigma\|\sqrt{\rho}\dot{\mathbf{u}}\|_{L^{2}}^{2}\mathrm{d}t
 +C\int_{\sigma(T)}^T\sigma^3\|\nabla\dot{\mathbf{u}}\|_{L^{2}}^{2}\mathrm{d}t
 +C\sup_{t\in[\sigma(T),T]}\big(t^\frac{3}{4}\|\sqrt{\rho}\dot{\mathbf{u}}\|_{L^2}\big)
 \|\rho\|_{L^\frac{26}{3}}^{\frac{13}{32}}
 \notag\\
&\quad
+C\sup_{t\in[\sigma(T),T]}\Big(t^\frac{39}{64}\|\sqrt{\rho}\dot{\mathbf{u}}\|_{L^2}^\frac{13}{16}\Big)
\bigg(\int_{\sigma(T)}^Tt^\frac32\|\nabla\dot{\mathbf{u}}\|_{L^2}^2\mathrm{d}t\bigg)^\frac{3}{32}
\bigg(\int_{t_1}^{t_2}1\mathrm{d}t\bigg)^\frac{29}{32}\|\rho\|_{L^\frac{26}{3}}^{\frac{13}{32}}
+\frac{a\tilde{c}(\theta-1)}{6(2\mu+\lambda)}(t_2-t_1)
\notag\\
&\leq
 \frac{a\tilde{c}(\theta-1)}{3(2\mu+\lambda)}(t_2-t_1)+
C_5(\hat{\rho})C_0^\frac{19-\alpha}{32(7\alpha+5)}.
\end{align}
Selecting
\begin{equation*}
  N_0=C_5(\hat{\rho})C_0^\frac{19-\alpha}{32(7\alpha+5)}, \ \
  N_1=\frac{a\tilde{c}(\theta-1)}{3(2\mu+\lambda)}, \ \
  \xi^*=1,
\end{equation*}
then one gets that
\begin{equation*}
  f(\xi)=-\frac{a\tilde{c}(\theta-1)}{3(2\mu+\lambda)}\xi^\frac{\theta}{\theta-\gamma}\leq-N_1,
  \ \ \ \text{for all} \ \xi\geq \xi^*=1.
\end{equation*}
It follows from Lemma \ref{lzlo} that
\begin{equation}\label{3.58}
y(t)\leq \max \big\{\hat{\rho}^\theta, 1\big\} + N_{0}\leq\hat{\rho}^\theta+C_5(\hat{\rho})C_0^\frac{19-\alpha}{32(7\alpha+5)}
  \leq \Big(\frac{3}{2}\hat{\rho}\Big)^\theta
\end{equation}
provided
\begin{equation*}
  C_0\leq \varepsilon_{4,2}\triangleq
  \min\left\{\varepsilon_3,\bigg(\frac{\hat{\rho}^\theta}
  {C_5(\hat{\rho})}\bigg)^\frac{32(7\alpha+5)}{19-\alpha}\right\}.
\end{equation*}
Consequently, the desired \eqref{3.49} follows from \eqref{3.55} and \eqref{3.58}
as long as $C_0\leq\varepsilon_4\triangleq\min\{\varepsilon_{4,1},\varepsilon_{4,2}\}$.
\end{proof}

Now we are ready to prove Proposition $\ref{p3.1}$.

\begin{proof}[Proof of Proposition \ref{p3.1}.]
Proposition \ref{p3.1} follows from Lemmas \ref{l3.3}, \ref{l3.4}, and \ref{l3.6} if we select $\varepsilon=\varepsilon_4$.
\end{proof}

\section{Proof of Theorem 1.1}\label{sec4}

With the \textit{a priori} estimates established in Section \ref{sec3}, we are now in a position to prove Theorem \ref{t1.1}.

\begin{proof}[Proof of Theorem \ref{t1.1}.]
Let $(\rho_0, \mathbf{u}_0)$ be initial data as described in the theorem.
For $\epsilon>0$, let $j_\epsilon=j_\epsilon(\mathbf{x})$ be the standard mollifier, and
define the approximate initial data $(\rho_0^\epsilon, \mathbf{u}_0^\epsilon)$:
\begin{align*}
\rho_0^\epsilon&=[J_\epsilon\ast(\rho_0\mathbf{1}_{\mathbb{R}^2_+})]
\mathbf{1}_{\mathbb{R}^2_+}+\phi\epsilon,  \ \ \text{with} \
\phi=\phi(\epsilon, \mathbf{x})\triangleq \psi_{1/\epsilon}(\mathbf{x})+(1-\psi_{1/\epsilon}(\mathbf{x}))e^{-|\mathbf{x}|^2}\leq 1,
\end{align*}
where the cut-off function $\psi$ is given in \eqref{3.45},
and $\mathbf{u}_0^\epsilon$ is the unique smooth solution to the elliptic equation
\begin{equation*}
\begin{cases}
\Delta \mathbf{u}_0^\epsilon = \Delta(J_\epsilon\ast\mathbf{u}_0), & \mathbf{x}\in \mathbb{R}^2_+, \\
\mathbf{u}_0^\epsilon =\mathbf{0}, & x_2=0.
\end{cases}
\end{equation*}
Then we have
\begin{align*}
\bar{x}^\alpha\rho_0^\epsilon\in L^1,\ \
(\rho_0^\epsilon-\phi\epsilon)\in H^2,\ \
\inf_{\mathbf{x}\in\widetilde{B}_{1/\epsilon}}\{\rho_0^\epsilon(\mathbf{x})\}\geq \epsilon, \ \ \mathbf{u}_0^\epsilon\in D^2\cap D_0^1,\ \
\sqrt{\rho_0^\epsilon}\mathbf{u}_0^\epsilon\in L^2.
\end{align*}

According to Lemma \ref{l2.1}, there exists a time $T_*>0$ such that the problem \eqref{a1}--\eqref{a4}
with initial data $(\rho_0^\epsilon, \mathbf{u}_0^\epsilon)$
admits a unique strong solution $(\rho^\epsilon,{\bf u}^\epsilon)$ on $\mathbb{R}^2_+\times(0,T_*]$ satisfying
\begin{equation*}
\bar{x}^\alpha\rho^\epsilon \in L^\infty(0,T; L^{1}), \ \
\rho^\epsilon-\phi\epsilon\in C([0,T_*];H^2),\
{\bf u}^\epsilon\in C([0,T_*];D^2\cap D^1), \ \inf\limits_{({\bf x},t)\in\widetilde{B}_{1/\epsilon}\times [0,T_*]}\rho^\epsilon({\bf x},t)>0.
\end{equation*}
It follows from \eqref{1.12} and \eqref{3.1}--\eqref{3.3} that
\begin{equation*}
  A_1(0)=A_2(0)=0, \ \ A_3(0)\leq M^2\leq K,
\end{equation*}
which implies that there is $T_1\in(0,T_*]$ such that \eqref{3.4} holds for $T=T_1$.
Set
\begin{equation}\label{4.1}
  T^*=\sup\{T\,| \,\eqref{3.4} \ \text{holds}\}.
\end{equation}
Obviously $T^*\ge T_1>0$. We claim
\begin{equation}\label{4.2}
  T^*=\infty.
\end{equation}
Otherwise, Lemma \ref{l3.5} gives that, for $0<T< T^*$,
\begin{align*}
  \sup_{t\in[0,T]}\|\bar{x}^\alpha\rho^\epsilon(t)\|_{L^1}\leq C(T),
\end{align*}
which implies the tightness of $\{\rho^\epsilon\}$ at infinity: for any $\delta>0$ there exists $R>0$ such that
\begin{equation}\label{www}
  \sup_{\epsilon>0}\sup_{t\in[0,T]}\int_{\mathbb{R}^2_+\backslash \widetilde{B}_R}\rho^\epsilon(t,\mathbf{x})\mathrm{d}\mathbf{x}\leq \delta.
\end{equation}
Note that Lemmas \ref{l3.3}, \ref{l3.4}, and \ref{l3.6} hold independently of the lower bound of initial density, the time of existence, and the parameter $\epsilon$.
From Proposition \ref{p3.1}, one can deduce that the improved bounds in \eqref{3.5} are valid for all $0<T< T^*$ provided $C_0\leq \varepsilon$.
These uniform estimates allow us to take the limit as $t\rightarrow T^*$. By the weak lower semicontinuity of the norms, we obtain a solution at time $T^*$ with regularity sufficient to serve as initial data. Applying Lemma \ref{l2.1} to this data yields an extension of solutions until some
$T^{**}>T^*$ such that \eqref{3.4} holds for any $0< T<T^{**}$, which contradicts \eqref{4.1}. Hence, \eqref{4.2} is true.

For any fixed $\tau$ and $T$ with $0<\tau<T<\infty$, it follows from Section \ref{sec3} that the approximate solutions $(\rho^\epsilon,\mathbf{u}^\epsilon)$ satisfy the uniform bounds
\begin{align*}
\begin{cases}
  \rho^\epsilon \in L^\infty(0,T;L^1\cap L^\theta),\ \
  \sqrt{\rho^\epsilon}\mathbf{u}^\epsilon \in L^\infty(0,T;L^2), \\
   \nabla\mathbf{u}^\epsilon\in L^\infty(\tau,T;L^2)\cap
   L^2(0,T;L^2), \ \
    \sqrt{\rho^\epsilon}\dot{\mathbf{u}}^\epsilon \in L^\infty(\tau,T;L^2),
\end{cases}
\end{align*}
where $\theta>20\gamma$. Following an argument analogous to that in \cite{LL14}, we obtain that
\begin{equation}\label{x*}
  \lim_{\epsilon\rightarrow0}\|\nabla\mathbf{u}_0^\epsilon-\nabla\mathbf{u}_0\|_{L^2}
  + \lim_{\epsilon\rightarrow0}\|\sqrt{\rho_0^\epsilon}\mathbf{u}_0^\epsilon-\sqrt{\rho_0}\mathbf{u}_0\|_{L^2}=0.
\end{equation}
Moreover, the continuity equation implies that
\begin{equation*}
  \partial_t\rho^\epsilon \ \ \text{is bounded in}  \ L^2(\tau,T;H^{-1}).
\end{equation*}
Hence, using \eqref{www} and the local Aubin--Lions compactness, up to the extraction of a subsequence (still labeled by $\epsilon$), we have
\begin{equation}\label{4.3}
  \rho^\epsilon-\phi\epsilon\rightarrow \rho \ \ \text{strongly in} \  C\big([0,T];L^q\big), \ \ \text{for any} \
  q\in[1,\theta).
\end{equation}
Furthermore, the momentum equation yields that
\begin{equation*}
  \partial_t(\rho^\epsilon\mathbf{u}^\epsilon) \ \ \text{is bounded in}  \ L^2\big(\tau,T;W^{-1,s}\big) \ \ \text{for some} \ s>1.
\end{equation*}
This along with the uniform bound
\begin{equation*}
  \rho^\epsilon\mathbf{u}^\epsilon \in L^\infty\big(0,T;L^{\frac{2\theta}{\theta+1}}\big)
\end{equation*}
and the Aubin--Lions lemma implies that, up to a subsequence (still labeled by $\epsilon$),
\begin{equation}\label{4.4}
  \rho^\epsilon\mathbf{u}^\epsilon\rightarrow \mathbf{m} \ \ \text{strongly in} \
  C\big([\tau,T];L^{1+\zeta}_{\loc}\big) \ \ \text{for some small} \  \zeta.
\end{equation}
Define $\mathbf{u}=\mathbf{m}/\rho$ on $\{\rho>0\}$ and $\mathbf{u}=0$ on $\{\rho=0\}$.
Finally, combining \eqref{4.3} and \eqref{4.4}, by standard arguments (see \cite{PL98,NS04})
we infer that
\begin{equation*}
  \nabla\mathbf{u}^\epsilon\rightarrow \nabla\mathbf{u} \ \ \text{strongly in} \
  L^2(\tau,T;L^2(\{\rho>\delta\})) \ \ \text{for any} \ \delta>0.
\end{equation*}
Consequently, passing to the limit $\epsilon\rightarrow0$ shows that the limit $(\rho, \mathbf{u})$ is in fact a weak solution in the sense of Definition \ref{d1.1} satisfying \eqref{1.14} on $\mathbb{R}^2_+\times(0,T]$ for any $0<T<T^*=\infty$.
\end{proof}

\section*{Conflict of interest}
The authors declare that they have no conflict of interest.

\section*{Data availability}
No data was used for the research described in the article.


\begin{thebibliography}{99}

\bibitem{BB23}
D. Bresch and C. Burtea, Extension of the Hoff solutions framework to cover Navier--Stokes equations for a compressible fluid with anisotropic viscous-stress tensor, {\it Indiana Univ. Math. J.}, {\bf72} (2023), 2145--2189.

\bibitem{BJ18}
D. Bresch and P.-E. Jabin, Global existence of weak solutions for compressible Navier--Stokes equations: thermodynamically unstable pressure and anisotropic viscous stress tensor, {\it Ann. of Math.}, {\bf188} (2018), 577--684.

\bibitem{CL23}
G. Cai and J. Li, Existence and exponential growth of global classical solutions to the compressible Navier--Stokes equations with slip boundary conditions in 3D bounded domains, {\it Indiana Univ. Math. J.}, {\bf72} (2023), 2491--2546.

\bibitem{CD10}
F. Charve and R. Danchin, A global existence result for the compressible Navier--Stokes equations in the critical $L^p$ framework, {\it Arch. Ration. Mech. Anal.}, {\bf198} (2010), 233--271.

\bibitem{CCZ10}
Q. Chen, C. Miao, and Z. Zhang, Global well-posedness for compressible Navier--Stokes equations with highly oscillating initial velocity, {\it Comm. Pure Appl. Math.}, {\bf63} (2010), 1173--1224.

\bibitem{Da00}
R. Danchin, Global existence in critical spaces for compressible Navier--Stokes equations, {\it Invent. Math.}, {\bf 141} (2000), 579--614.

\bibitem{D12}
Q. Duan, Global well-posedness of classical solutions to the compressible Navier--Stokes equations in a half-space, {\it J. Differential Equations}, {\bf253} (2012), 167--202.

\bibitem{F04}
E. Feireisl, Dynamics of viscous compressible fluids, Oxford University Press, Oxford, 2004.

\bibitem{FNP01}
E. Feireisl, A. Novotn\'{y}, and H. Petzeltov\'a, On the existence of globally defined weak solutions to the Navier--Stokes equations, {\it J. Math. Fluid Mech.}, {\bf 3} (2001), 358--392.

\bibitem{GN18}
Y. Giga and A. Novotn\'{y} ed., Handbook of mathematical analysis in mechanics of viscous fluids, Springer, Cham, 2018.


\bibitem{H11}
B. Haspot, Existence of global strong solutions in critical spaces for barotropic viscous fluids, {\it Arch. Ration. Mech. Anal.}, {\bf 202} (2011), 427--460.

\bibitem{Hoff95}
D. Hoff, Global solutions of the Navier--Stokes equations for multidimensional compressible flow with discontinuous initial data, {\it J. Differential Equations}, {\bf 120} (1995), 215--254.

\bibitem{Hoff95*}
D. Hoff, Strong convergence to global solutions for multidimensional flows of compressible, viscous fluids with polytropic equations of state and discontinuous initial data, {\it Arch. Ration. Mech. Anal.}, {\bf 132} (1995), 1--14.

\bibitem{Hoff02}
D. Hoff, Dynamics of singularity surfaces for compressible, viscous flows in two space dimensions, {\it Comm. Pure Appl. Math.}, {\bf 55} (2002), 1365--1407.

\bibitem{Hoff05}
D. Hoff, Compressible flow in a half-space with Navier boundary conditions, {\it J. Math. Fluid Mech.}, {\bf 7} (2005), 315--338.

\bibitem{Hoff06}
D. Hoff, Uniqueness of weak solutions of the Navier--Stokes equations of multidimensional, compressible flow, {\it SIAM J. Math. Anal.}, {\bf 37} (2006), 1742--1760.

\bibitem{HP09}
D. Hoff and M. Perepelitsa, Instantaneous boundary tangency and cusp formation in two-dimensional fluid flow, {\it SIAM J. Math. Anal.}, {\bf 41} (2009), 753--780.

\bibitem{HP12}
D. Hoff and M. Perepelitsa, Boundary tangency for density interfaces in compressible viscous fluid flows, {\it J. Differential Equations}, {\bf 253} (2012), 3543--3567.

\bibitem{Hoff08}
D. Hoff and M. M. Santos,
Lagrangean structure and propagation of singularities in multidimensional compressible flow, {\it Arch. Ration. Mech. Anal.}, {\bf 188} (2008), 509--543.

\bibitem{HHPZ24}
G. Hong, X. Hou, H. Peng, and C. Zhu, Global existence for a class of large solution to compressible Navier--Stokes equations with vacuum, {\it Math. Ann.}, {\bf388} (2024), 2163--2194.

\bibitem{HLX12}
X. Huang, J. Li, and Z. Xin, Global well-posedness of classical solutions with large oscillations and vacuum to the three-dimensional isentropic compressible Navier--Stokes equations, {\it Comm. Pure Appl. Math.}, {\bf 65} (2012), 549--585.

\bibitem{JZ01}
S. Jiang and P. Zhang, On spherically symmetric solutions of the compressible isentropic Navier--Stokes equations, {\it Comm. Math. Phys.}, {\bf 215} (2001), 559--581.

\bibitem{JZ03}
S. Jiang and P. Zhang, Axisymmetric solutions of the 3D Navier--Stokes equations for compressible isentropic fluids, {\it J. Math. Pures Appl.}, {\bf 82} (2003), 949--973.

\bibitem{LL14}
J. Li and Z. Liang, On local classical solutions to the Cauchy problem of the two-dimensional barotropic compressible Navier--Stokes equations with vacuum, {\it J. Math. Pures Appl.}, {\bf102} (2014), 640--671.

\bibitem{LX19}
J. Li and Z. Xin, Global well-posedness and large time asymptotic behavior of classical solutions to the compressible Navier--Stokes equations with vacuum, {\it Ann. PDE}, {\bf 5} (2019), Paper No. 7.

\bibitem{PL96}
P.-L. Lions, Mathematical topics in fluid mechanics. Vol. 1, Incompressible models, Oxford University Press, New York, 1996.

\bibitem{PL98}
P.-L. Lions, Mathematical topics in fluid mechanics. Vol. 2, Compressible models, Oxford University Press, New York, 1998.

\bibitem{MN80}
A. Matsumura and T. Nishida, The initial value problem for the equations of motion of viscous and heat-conductive gases, {\it J. Math. Kyoto Univ.}, {\bf 20} (1980), 67--104.

\bibitem{MN83}
A. Matsumura and T. Nishida, Initial boundary value problems for the equations of motion of compressible viscous and heat-conductive fluids, {\it Comm. Math. Phys.}, {\bf89} (1983), 445--464.


\bibitem{M2}
F. Merle, P. Rapha\"el, I. Rodnianski, and J. Szeftel, On the implosion of a compressible fluid II: singularity formation, {\it Ann. of Math.}, {\bf 196} (2022), 779--889.

\bibitem{NI1959}
L. Nirenberg, On elliptic partial differential equations, {\it Ann. Sc. Norm. Super. Pisa Cl. Sci.}, {\bf 13} (1959), 115--162.

\bibitem{NS04}
A. Novotn\'{y} and I. Stra\v{s}kraba, Introduction to the mathematical theory of compressible flow, Oxford University Press, Oxford, 2004.

\bibitem{P14}
M. Perepelitsa, Weak solutions of the Navier--Stokes equations for compressible flows in a half-space with no-slip boundary conditions, {\it Arch. Ration. Mech. Anal.}, {\bf212} (2014), 709--726.

\bibitem{W25}
H. Wen, Global wellposedness of compressible Navier--Stokes equations with vacuum and smallness on scaling invariant quantity in $\mathbb{R}^3$, {\it Adv. Math.}, {\bf 482} (2025), Paper No. 110628.

\bibitem{Zlo}
A. A. Zlotnik, Uniform estimates and stabilization of symmetric solutions of a system of quasilinear equations, {\it Differ. Equ.}, {\bf36} (2000), 701--716.

\end{thebibliography}
\end{document}